\theoremstyle{plain}
\newtheorem{theorem}{Theorem}[section]
\newtheorem{lemma}{Lemma}[section]
\newtheorem{corollary}{Corollary}[section]
\newtheorem{proposition}{Proposition}[section]
\theoremstyle{definition}
\newtheorem{remark}{Remark}[section]
\DeclareMathOperator{\Var}{Var}
\DeclareMathOperator{\Cov}{Cov}
\DeclareMathOperator{\op}{op}
\newcommand{\indep}{\mathop{\perp\!\!\!\!\perp}}
\newcommand{\bx}{\bm{x}}
\newcommand{\E}{\mathbb{E}}
\renewcommand{\Pr}{\mathbb{P}}
\newcommand{\bbeta}{\bm{\beta}}
\renewcommand{\hat}{\widehat}
\renewcommand{\tilde}{\widetilde}
\begin{document}
\title[Estimation and inference under misspecification]{Estimation and inference for linear panel data models under misspecification when both $n$ and $T$ are large}
\thanks{K. Kato is supported by the Grant-in-Aid for Young Scientists (B) (22730179, 25780152), the Japan Society for the Promotion of Science. }
\author[Galvao]{Antonio F. Galvao}
\author[Kato]{Kengo Kato}

\address[A.F. Galvao]{
Department of Economics, University of Iowa, W284 Pappajohn Business Building, 21 E. Market Street, Iowa City, IA 52242. 
}
\email{antonio-galvao@uiowa.edu}

\address[K. Kato]{
Graduate School of Economics, University of Tokyo, 7-3-1 Hongo, Bunkyo-ku, Tokyo 113-0033, Japan.
}
\email{kkato@e.u-tokyo.ac.jp}

\date{First version: February 13, 2013. This version: \today}

\begin{abstract}
This paper considers fixed effects (FE) estimation for linear panel data models under possible model misspecification when both the number of individuals, $n$, and the number of time periods, $T$, are large. We first clarify the probability limit of the FE estimator and argue that this probability limit can be regarded as a pseudo-true parameter. We then establish the asymptotic distributional properties of the FE estimator around the pseudo-true parameter when $n$ and $T$ jointly go to infinity. Notably, we show that the FE estimator suffers from the incidental parameters bias of which the top order is $O(T^{-1})$, and even after the incidental parameters bias is completely removed, the rate of convergence of the FE estimator depends on the degree of model misspecification and is either $(nT)^{-1/2}$ or $n^{-1/2}$. Second,  we  establish asymptotically valid inference on  the (pseudo-true) parameter. Specifically, we derive the asymptotic properties of the clustered covariance matrix (CCM) estimator and the cross section bootstrap, and show that they are robust to model misspecification. This establishes a rigorous theoretical ground for the use of  the CCM estimator and the cross section bootstrap when model misspecification and the incidental parameters bias (in the coefficient estimate) are present.  We conduct Monte Carlo simulations to evaluate the finite sample performance of the estimators and inference methods, together with a simple application to the unemployment dynamics in the U.S.
\end{abstract}
\keywords{incidental parameters problem, cross section bootstrap, fixed effects estimator, panel data, misspecification}

\maketitle

%

\maketitle

\section{Introduction}

It is well known that for a cross-section data set, the ordinary least squares (OLS) estimator is typically  consistent for 
the coefficient vector of the best linear approximation to the conditional mean, even if the conditional mean is not necessarily linear \citep{W80,W82}. Such a ``robust'' nature of OLS is one of the reasons why OLS is popular in empirical studies \citep[][Chapter 3]{AP08}.

Suppose now that a panel data set is available. In such a case, we are typically interested in estimating the partial effects of the observed explanatory variables, $\bx_{it}$, on the conditional mean of the dependent variable, $y_{it}$, conditional on $\bx_{it}$ and the unobservable individual effect $c_{i}$, where $i$ denotes the index for individuals and $t$ denotes the index for time.\footnote{See, e.g., \cite{W01}, Chapter 10. We follow the notation used in this reference.}
For that purpose, a popular strategy is to model the conditional mean $\E[y_{it} \mid  \bx_{it},c_{i}]$ as additive in both $c_{i}$ and $\bx_{it}$, and linear in $\bx_{it}$. The focus is then on estimating the coefficient vector of $\bx_{it}$.  The fixed effects (FE) estimator, which is identical to the OLS estimator treating the individual effects as parameters to be estimated, is often used to estimate the coefficient vector. While for models without strict exogeneity, such as dynamic panel data models, the FE estimator is generally inconsistent when $n$ (the number of individuals) goes to infinity and $T$ (the number of time periods) is fixed because of the incidental parameters problem \citep{NS48,N81,L00}, it is still a fundamental estimator.  In particular, when $n$ and $T$ jointly go to infinity, the FE estimator becomes consistent. Furthermore, after the incidental parameters bias is properly corrected, the FE estimator is known to have a centered limiting normal distribution provided that $n/T^{3} \to 0$, which restricts $T$ to be mildly large but allows $T$ to be small relative to $n$. 
Such asymptotic properties of the FE estimator under large $n$ and $T$ asymptotics have been extensively studied in the econometrics literature, especially for panel autoregressive (AR) models, partly motivated by the fact that panel data sets with mildly large $T$ have become available in empirical studies.\footnote{See, e.g., \cite{K95,HK02,AA03,BC05,BK06,PS07,H07a,O08,O10,L10}.}

The previous discussion presumes that the model is correctly specified, that is,  the conditional mean $\E[y_{it} \mid  \bx_{it},c_{i}]$ is truly additive in $c_{i}$ and $\bx_{it}$, and linear in $\bx_{it}$. The goals of this paper are twofold. The first objective is to study the asymptotic properties of the FE estimator under possible model misspecification when both $n$ and $T$ are large. 
The asymptotics used is the joint asymptotics where $n$ and $T$ jointly go to infinity (more precisely, we index $T$ by $n$ and let $T=T_{n} \to \infty$ as $n \to \infty$). 
Assume that  $\{ (c_{i},y_{i1},\bx_{i1},y_{i2},\bx_{i2},\dots) \}_{i=1}^{\infty}$ is independent and identically distributed (i.i.d.), and for each $i \geq 1$, conditional on $c_{i}$, 
$\{ (y_{it},\bx_{it}')' \}_{t=1}^{\infty}$ is stationary and weakly dependent.
Suppose that $\E[ y_{it} \mid \bx_{it},c_{i}]$ may not be additive in $c_{i}$ and $\bx_{it}$, nor linear in $\bx_{it}$.
Under this setting, we show that the probability limit of the FE estimator is identical to the coefficient vector on $\bx_{it}$ of the best {\em partial} linear approximation to $\E[y_{it} \mid \bx_{it},c_{i}]$, which gives some rational to use the FE estimator (and its variant) even when the model is possibly misspecified. We regard this probability limit as a pseudo-true parameter  (see Section \ref{sec: interpretation} for the discussion on interpretation -- or plausibility -- of this probability limit; especially if $\E[y_{it} \mid \bx_{it},c_{i}]$ is indeed additive in $c_{i}$ and $\bx_{it}$, and linear in $\bm{x}_{it}$, then the pseudo-true parameter coincides with the ``true'' coefficient on $\bm{x}_{it}$ in $\E[ y_{it} \mid \bm{x}_{it}, c_{i}]$).
We then establish the asymptotic distributional properties of the FE estimator around the pseudo-true parameter when $n$ and $T$ jointly go to infinity. 
We demonstrate that, as in the correct specification case, the FE estimator suffers from the incidental parameters bias of which the top order is $T^{-1}$. 
Moreover, we show that, after  the incidental parameters bias is completely removed, the rate of convergence of the FE estimator depends on the degree of model misspecification and is either $(nT)^{-1/2}$ or $n^{-1/2}$.

The second goal of the paper is to establish asymptotically valid inference on the (pseudo-true) parameter vector. Since the FE estimator has the bias of order $T^{-1}$, the first step is to reduce the bias to $O(T^{-2})$. For that purpose, one can use existing general-purpose bias reduction methods proposed in the recent nonlinear panel data literature.\footnote{Hence there is no new result in the bias correction part.} For example, one can use the half-panel jackknife (HPJ) proposed by \cite{DJ09}. We refer to \cite{HK11} and \cite{AB09} for alternative approaches on bias correction for fixed effects estimators in panel data models.
After the bias is properly reduced, the FE estimator has the centered limiting normal distribution provided that $n/T^{3} \to 0$ or $n/T^{4} \to 0$ depending on the degree of model misspecification. We are then interested in estimating the covariance matrix or quantiles of the centered limiting normal distribution. 
To this end, we study the asymptotic properties of the clustered covariance matrix (CCM) estimator \citep{A87} and the cross section bootstrap \citep{K08} under the prescribed setting. We show that the CCM estimator (with an appropriate estimator of the parameter vector) is consistent in a suitable sense and can be used to make asymptotically valid inference on the parameter vector provided that $n/T^{3} \to 0$ or $n/T^{4} \to 0$. This shows that inference using the CCM estimator is ``robust'' to model misspecification. Also the cross section bootstrap can consistently estimate the centered limiting distribution of the FE estimator  without any knowledge on the degree of model misspecification and hence is robust to model misspecification, and moreover, interestingly,   {\em without any growth restriction on $T$}. 
The second feature of the cross section bootstrap is notable and shows (in a sense) that the incidental parameters bias does not appear in the bootstrap distribution. 

Allowing for potential model misspecification is of importance in practice. In particular, this paper is of practical importance because it provides an interpretation for the FE estimator under potential misspecification, and additionally, it proposes methods for inference in linear panel data model with large $n$ and $T$ that are robust to model misspecification. However, the study of estimation and inference for linear panel data models that are robust to model misspecification is scarce.\footnote{\cite{AP08}, p.166, remarked that ``The set of assumption leading to (5.1.2) is more restrictive than those we used to motivate regression in Chapter 3; we need the linear, additive functional form to make headway on the problem of {\em unobserved} confounders using panel data with no instruments''.}
An exception is \cite{L10} where he considered the lag order misspecification of panel AR models and established the asymptotic properties of the FE estimator under  possible misspecification of the lag order.
However, his focus is on the incidental parameters bias and he did not study the inference problem on the pseudo-true parameter. Moreover, \cite{L10} did not cover a general form of model misspecification. 
This paper fills this void. Furthermore, the asymptotic properties of the CCM estimator and the cross section bootstrap when model misspecification and the incidental parameters bias (in the coefficient estimate) are present have not been studied in a systematic form and hence is under-developed. 
\cite{H07b} investigated the asymptotic properties of the CCM estimator when $n$ and $T$ are large but did not allow the case where the incidental parameters bias appears, nor did he cover model misspecification. 
\cite{K08} studied the asymptotic properties of the cross section bootstrap when $n$ and $T$ are large, but ruled out the case where the incidental parameters bias appears, nor did he cover model misspecification as well. 
Hence we believe that this paper is the first one  that establishes a rigorous theoretical ground on the use of  the CCM estimator and the cross section bootstrap when model misspecification and the incidental parameters bias are present. 
It is important to notice that, even without
model misspecification, these asymptotic properties of the CCM and cross section
bootstrap when the incidental parameters bias is present are new.

We conduct Monte Carlo simulations to evaluate the finite sample performance of the estimators and inference methods under misspecification.  
 We are particularly interested in the empirical coverage of the 95\% nominal confidence interval. 
The empirical coverage probability using the CCM and cross-section bootstrap, especially  the cross section bootstrap applied to pivotal statistics, is good.  
We also apply the procedures discussed in this paper to a model of unemployment dynamics at the U.S. state level. The results generate speed of adjustment of the unemployment rate towards the state specific equilibrium of about 17\%. In addition, the analysis of estimates indicates that increments in economic growth are associated with smaller unemployment rates.

The organization of this paper is as follows. In Section \ref{sec: interpretation}, we discuss the interpretation of FE estimator under misspecification. In Section \ref{sec: asymptotics}, we present the theoretical results on the asymptotic properties of the FE estimator under misspecification. In Section \ref{sec: inference}, we presents the  results on the inference methods. In Section \ref{sec: MC}, we  report a Monte Carlo study to assess the finite sample performance of the estimators and inference methods, together with a simple application to a real data. 
Section \ref{sec: conclusion} concludes. 
We place all the technical proofs to the Appendix. We also include additional theoretical and simulation results in the Appendix. 

{\bf Notation}: For a generic vector $\bm{z}$, let $z^{a}$ denote the $a$-th element of $\bm{z}$. For a generic matrix $A$, let $A^{ab}$ denote its $(a,b)$-th element. For a generic vector $\bm{z}_{it}$ with index $(i,t)$, $\bar{\bm{z}}_{i} = T^{-1} \sum_{t=1}^{T} \bm{z}_{it}$. 
Let $\| \cdot \|$ denote the Euclidean norm. For any matrix $A$, let $\| A \|_{\op}$ denote the operator norm of $A$.
For any symmetric matrix $A$, let $\lambda_{\min}(A)$ denote the minimum eigenvalue of $A$.
We also use the notation $\bm{z}^{\otimes 2} = \bm{z} \bm{z}'$ for a generic vector $\bm{z}$.

{\bf Note on asymptotics}: In what follows, we consider the asymptotic framework in which $T=T_{n} \to \infty$ as $n \to \infty$, so that if we write $n \to \infty$, it automatically means that $T \to \infty$. The limit is always taken as $n \to \infty$. This asymptotic scheme is used to capture the situation where $n$ and $T$ are both large.

\section{Interpretation of fixed effects estimator under misspecification}
\label{sec: interpretation}

In this section we clarify the probability limit, which we will regard as a pseudo-true parameter, of the FE estimator under the joint asymptotics and discuss interpretation (or plausibility) of the pseudo-true parameter. The discussion is to some extent parallel to the linear regression case but there is a subtle difference due to the appearance of individual effects. 

Suppose that we have a panel data set $\{ (c_{i},y_{it},\bx_{it}): i=1,\dots,n;\ t=1,\dots,T \}$, where $c_{i}$ is an unobservable individual-specific random variable taking values in an abstract (Polish) space, $y_{it}$ is a scalar dependent variable and $\bx_{it}$ is a vector of $p$ explanatory variables.
Typically, the random variable $c_{i}$, which is constant over time,  represents an individual characteristic such as ability or firm's managerial quality which we would include in the analysis if it were observable \cite[see][Chapter 10]{W01}.
 Assume that $\{ (c_{i},y_{i1},\bx_{i1},\dots,y_{iT},\bx_{iT}) \}_{i=1}^{n}$ is i.i.d., and for each $1 \leq i \leq n$, conditional on $c_{i}$, 
$\{ (y_{it},\bx_{it}')' \}_{t=1}^{T}$ is a realization of a stationary weakly dependent process.\footnote{That is, the data are i.i.d. across individuals, but for each individual, conditional on the individual effect, the data are (weakly) dependent across time.} Here the marginal distribution of $(c_{i},y_{it},\bx_{it})$ is invariant with respect to $(i,t)$. 
Typically, we are interested in estimating the partial effects of $\bx_{it}$ on the conditional mean $\E[y_{it} \mid \bx_{it},c_{i}]$ with keeping $c_{i}$ fixed. A ``standard'' linear panel data model assumes that 
the conditional mean is of the form $g(c_{i}) + \bx_{it}'\bbeta$ with unknown function $g$ and vector $\bbeta$, and redefines $c_{i}$ by $g(c_{i})$ since, in any case, $c_{i}$ is unobservable and modeling a functional form for the individual effect is virtually meaningless \citep[see][Chapter 5]{AP08}. In this paper, the ``correct'' specification refers
to that the conditional mean $\E[y_{it} \mid  \bx_{it},c_{i}]$ is written in the form $g(c_{i})+\bx_{it}'\bbeta$, and ``model
misspecification'' signifies any violation of this condition. For instance, this can happen if there are omitted variables or if nonlinearity occurs in the model. We discuss more details below (see Examples 1 and 2 below for concrete examples). 

The FE estimator defined by 
\begin{equation}
\hat{\bbeta} = \left \{ \frac{1}{nT} \sum_{i=1}^{n} \sum_{t=1}^{T} (\bx_{it} - \bar{\bx}_{i}) (\bx_{it} - \bar{\bx}_{i})' \right \}^{-1} \left \{ \frac{1}{nT} \sum_{i=1}^{n} \sum_{t=1}^{T} (\bx_{it} - \bar{\bx}_{i}) (y_{it} - \bar{y}_{i}) \right \}
\label{fe}
\end{equation}
is consistent for the coefficient vector on $\bx_{it}$ as $n$ goes to infinity and $T$ is fixed if the specification is correct (for the moment, assuming that $\hat{\bbeta}$ exists) and additionally the strict exogoneity assumption 
$\E[ y_{it} \mid \bx_{i1},\dots,\bx_{iT}, c_{i}] = \E[y_{it} \mid \bx_{it},c_{i}]$ is met. 
If the strict exogoneity assumption is violated, then the FE estimator is not fixed-$T$ consistent, but as $T \to \infty$ with $n$, the FE estimator becomes consistent for the coefficient vector on $\bx_{it}$ provided that the specification is correct. 
Suppose now that the specification is not correct, i.e., $\E[y_{it} \mid \bx_{it},c_{i}]$ may not be written in the form $g(c_{i}) + \bx_{it}'\bbeta$, and consider the probability limit of the FE estimator when $n$ and $T$ jointly go to infinity. 
Proposition \ref{prop1} ahead shows that, subject to some technical conditions, we have, as $n \to \infty$ and $T=T_{n} \to \infty$, 
\begin{equation}
\hat{\bbeta} \stackrel{\Pr}{\to}  \E[ \tilde{\bx}_{it}\tilde{\bx}_{it}']^{-1} \E[ \tilde{\bx}_{it} \tilde{y}_{it}] =: \bbeta_{0},
\label{ptrue}
\end{equation}
where $\tilde{y}_{it} = y_{it} - \E[ y_{it} \mid c_{i} ]$ and $\tilde{\bx}_{it} = \bx_{it} - \E[ \bx_{it} \mid c_{i}]$ (for a moment, assume that some moments exist). To gain some insight, we provide a heuristic derivation of this probability limit under the sequential asymptotics  where $T \to \infty$ first and then $n \to \infty$. By definition, we have
\begin{equation*} 
\hat{\bbeta} = \left \{ \frac{1}{nT} \sum_{i=1}^{n} \sum_{t=1}^{T} \tilde{\bx}_{it}\tilde{\bx}'_{it} - \frac{1}{n} \sum_{i=1}^{n} \bar{\tilde{\bx}}_{i}\bar{\tilde{\bx}}_{i}' \right \}^{-1} \left \{ \frac{1}{nT} \sum_{i=1}^{n} \sum_{t=1}^{T} \tilde{\bx}_{it} \tilde{y}_{it} - \frac{1}{n} \sum_{i=1}^{n} \bar{\tilde{\bx}}_{i} \bar{\tilde{y}}_{i} \right \}.
\end{equation*}
Since $\{ (y_{it},\bx_{it}')' \}_{t=1}^{T}$ is weakly dependent conditional on $c_{i}$, as $T \to \infty$ first, we have $T^{-1} \sum_{t=1}^{T} \tilde{\bx}_{it}\tilde{\bx}'_{it} \stackrel{\Pr}{\to} \E[ \tilde{\bx}_{i1}\tilde{\bx}_{i1}' \mid c_{i}], \ \bar{\tilde{\bx}}_{i} \stackrel{\Pr}{\to} \bm{0}, \ T^{-1} \sum_{t=1}^{T} \tilde{\bx}_{it} \tilde{y}_{it} \stackrel{\Pr}{\to} \E[ \tilde{\bx}_{i1} \tilde{y}_{i1} \mid c_{i}]$ and $\bar{\tilde{y}}_{i} \stackrel{\Pr}{\to} 0$, so that 
$\hat{\bbeta} \stackrel{\Pr}{\to} \{ n^{-1} \sum_{i=1}^{n} \E[ \tilde{\bx}_{i1}\tilde{\bx}_{i1}' \mid c_{i}] \}^{-1} \{ n^{-1} \sum_{i=1}^{n} \E[ \tilde{\bx}_{i1} \tilde{y}_{i1} \mid c_{i}] \}$. By the law of large numbers, the right side converges in probability to $\bbeta_{0}$ as $n \to \infty$. 

In what follows, we discuss an interpretation of $\bbeta_{0}$ defined in (\ref{ptrue}). A direct interpretation is that $\bbeta_{0}$ is the coefficient vector of the best linear approximation to $\E[\tilde{y}_{it} \mid \tilde{\bx}_{it}]$, but
this interpretation does not explain the connection with the primal object of estimating the partial effects of $\bx_{it}$ on $\E[y_{it} \mid \bx_{it},c_{i}]$. 
However, the link emerges from the following discussion. Let $g_{0}(c_{i}) + \bx_{it}'\bm{b}_{0}$ be the best partial linear predictor of 
$y_{it}$ on $(c_{i},\bx_{it})$, i.e., 
\begin{equation*}
\E[ ( y_{it} - g_{0}(c_{i}) - \bx_{it}'\bm{b}_{0})^{2}] = \min_{g \in L_{2}(c_{1}), \bm{b} \in \mathbb{R}^{p}} \E[ (y_{it} - g(c_{i})-\bx_{it}'\bm{b})^{2}],
\end{equation*}
where $L_{2}(c_{1}) = \{ g : \E[ g(c_{1})^{2} ] < \infty \}$. By a simple calculation, the explicit solution $(g_{0},\bm{b}_{0})$ is given by 
\begin{equation*}
g_{0} (c_{i}) = \E[ y_{it} \mid c_{i} ] - \E[\bx_{it} \mid c_{i}]'\bm{b}_{0}, \ \bm{b}_{0} = \E[ \tilde{\bx}_{it}\tilde{\bx}_{it}']^{-1}\E[ \tilde{\bx}_{it} \tilde{y}_{it}] = \bbeta_{0},
\end{equation*}
and hence $\bbeta_{0}$ is the coefficient vector on $\bx_{it}$ of the best partial linear predictor.
Moreover, it is not difficult to see that $g_{0}(c_{i}) + \bx_{it}'\bm{b}_{0}$ is indeed the best partial linear approximation to $\E[y_{it} \mid \bx_{it},c_{i}]$, i.e., 
\begin{equation}
\E[ ( \E[y_{it} \mid \bx_{it},c_{i}]  - g_{0}(c_{i}) - \bx_{it}'\bm{b}_{0})^{2}] = \min_{g \in L_{2}(c_{1}), \bm{b} \in \mathbb{R}^{p}} \E[ (\E[y_{it} \mid \bx_{it},c_{i}] - g(c_{i})-\bx_{it}'\bm{b})^{2}]. \label{papprox}
\end{equation}
Therefore, the vector $\bbeta_{0}$ defined by (\ref{ptrue}) is identical to the coefficient vector on $\bx_{it}$ of the best partial linear approximation to $\E[y_{it} \mid \bx_{it},c_{i}]$ in (\ref{papprox}).
Just as the coefficient vector of the best linear approximation to the conditional mean is a parameter of interest in the cross section case, as discussed in Chapter 3 of \cite{AP08}, 
the vector $\bbeta_{0}$ here can be regarded as a plausible parameter of interest in the panel data case. Hence, in this paper,  we consider $\bbeta_{0}$ to be a parameter of interest and treat $\bbeta_{0}$ as a pseudo-true parameter.

\begin{remark}
Clearly  if $\E[ y_{it} \mid \bx_{it},c_{i}]$ is indeed additive in $c_{i}$ and $\bx_{it}$, and linear in $\bx_{it}$, 
then $\bbeta_{0}$ coincides with the ``true'' coefficient on $\bx_{it}$ in $\E[ y_{it} \mid \bx_{it},c_{i}]$. 
\end{remark}

As in the cross section case, letting the approximation error denote $\epsilon_{it} = y_{it} - g_{0}(c_{i}) - \bx_{it}'\bbeta_{0} = \tilde{y}_{it} - \tilde{\bx}_{it}'\bbeta_{0}$, we have a regression form
\begin{equation}
y_{it} = g_{0}(c_{i}) + \bx_{it}'\bbeta_{0} + \epsilon_{it}, \ \E[ \epsilon_{it} \mid c_{i} ] = 0, \ \E[ \bx_{it} \epsilon_{it} ] = 0.
\label{regression}
\end{equation}
Importantly, the ``error term'' $\epsilon_{it}$ here may not satisfy the conditional mean restriction $\E[ \epsilon_{it} \mid \bx_{it},c_{i}] = 0$ due to possible model misspecification. 

In (\ref{regression}), there are two scenarios on violation of the conditional mean restriction $\E[ \epsilon_{it} \mid \bx_{it},c_{i}] = 0$. 
One is the case where $\E[ \epsilon_{it} \mid \bx_{it},c_{i}] \neq 0$ with  positive probability but $\E[ \tilde{\bx}_{it} \epsilon_{it} \mid c_{i}] = \bm{0}$ a.s. 
The other is the case where $\E[ \tilde{\bx}_{it} \epsilon_{it} \mid c_{i}] \neq \bm{0}$ with  positive probability. 
Depending on these two cases, the asymptotic properties of the FE estimator {\em do} change drastically (see Section 3).
Generally, both cases can happen. We give three simple examples to fix the idea.

{\em Example 1}. Panel AR model with misspecified lag order. Suppose that the true data generating process follows a panel AR(2) model
\begin{equation*}
y_{it} = c_{i} + \phi_{1} y_{i,t-1} + \phi_{2} y_{i,t-2} + u_{it}, \ c_{i} \indep \{ u_{it} \}_{t \in \mathbb{Z}}, \ c_{i} \in \mathbb{R}, \ \{ u_{it} \}: i.i.d., \ \E[ u_{it} ] = 0,
\end{equation*}
where $(\phi_{1},\phi_{2})$ is such that $\phi_{2} + \phi_{1} < 1, \phi_{2} - \phi_{1} < 1$ and $-1 < \phi_{2} < 1$. Conditional on $c_{i}$, $\{ y_{it} \}_{t \in \mathbb{Z}}$ is stationary and typically weakly dependent. By a simple calculation, we have 
$\E[ y_{it} \mid c_{i}] = c_{i}/(1-\phi_{1}-\phi_{2})$. Letting $\tilde{y}_{it} = y_{it} - \E[y_{it} \mid c_{i}]$, we have $\tilde{y}_{it}= \phi_{1}\tilde{y}_{i,t-1} + \phi_{2} \tilde{y}_{i,t-2} + u_{it}$. Hence $\{ \tilde{y}_{it} \}_{t \in \mathbb{Z}}$ is independent of $c_{i}$. Suppose now that we incorrectly fit a panel AR(1) model. 
Note that in this case, we have 
\begin{align*}
\E[ y_{it} \mid y_{i,t-1},c_{i} ] =c_{i} + \phi_{1} y_{i,t-1} + \phi_{2} \E[ y_{i,t-2} \mid y_{i,t-1},c_{i} ]. 
\end{align*}
Hence $\E[ y_{it} \mid y_{i,t-1},c_{i} ]$ is generally a nonlinear function of $c_{i}$ and $y_{i,t-1}$ except for the cases where $\phi_{2} = 0$ or the distribution of $u_{it}$ is normal. 
Here the solution of the equation $\E[ \tilde{y}_{i,t-1} (\tilde{y}_{it} - \beta_{0} \tilde{y}_{i,t-1})] = 0$ is the first order autocorrelation coefficient of $\{ \tilde{y}_{it} \}$, i.e., $\beta_{0} = \Cov (\tilde{y}_{it},\tilde{y}_{i,t-1})/\Var(\tilde{y}_{i,t-1})$. Letting $\epsilon_{it} = \tilde{y}_{it} - \beta_{0} \tilde{y}_{i,t-1} = (\phi_{1}-\beta_{0})\tilde{y}_{i,t-1} + \phi_{2}\tilde{y}_{i,t-2} + u_{it}$, we have $\tilde{y}_{it} = \beta_{0} \tilde{y}_{i,t-1} + \epsilon_{it}$, i.e., $y_{it} = (1-\beta_{0})c_{i}/(1-\phi_{1}-\phi_{2}) + \beta_{0} y_{i,t-1} + \epsilon_{it}$. By the independence of $\{ \tilde{y}_{it} \}$ from $c_{i}$, we have 
$\E[ \tilde{y}_{i,t-1} \epsilon_{it} \mid c_{i}] = \E[ \tilde{y}_{i,t-1} \epsilon_{it}] = 0$ a.s. 
\cite{L10} studied such panel AR models with misspecified lag order in detail. This paper covers lag order misspecification as a special case.

{\em Example 2}. Static panel with a mismeasured regressor. Suppose the true data generating process is as following
\begin{equation*}
y_{it} = c_{i} + \phi x_{it}^{*} + u_{it}, \ \E[ u_{it} \mid x_{it}^{*},c_{i}] = 0.
\end{equation*}
In addition, 
\begin{equation*}
x_{it} = x_{it}^{*} + v_{it}, \  \E[ v_{it} \mid x_{it}^{*},c_{i} ] = 0. 
\end{equation*}
Suppose that $c_{i}, u_{it}$ and $v_{it}$ are independent, and we incorrectly fit a model using $x_{it}$ instead of $x_{it}^{*}$. 
Here we have 
\[
\tilde{x}_{it} = x_{it} - \E[ x_{it} \mid c_{i} ]= x_{it}^{*} + v_{it} - \E[ x_{it}^{*} \mid c_{i} ] = \tilde{x}_{it}^{*} + v_{it}, 
\]
where $\tilde{x}_{it}^{*} = x_{it}^{*} - \E[ x_{it}^{*} \mid c_{i} ]$, and $\tilde{y}_{it} = y_{it} - \E[ y_{it} \mid c_{i} ] = \phi \tilde{x}_{it}^{*} + u_{it}$. 
Hence the solution to the equation $\E[ \tilde{x}_{it} ( \tilde{y}_{it} - \beta \tilde{x}_{it}) ] = 0$ is given by 
\[
\beta_{0} = \E[ \tilde{x}_{it}^{2} ]^{-1} \E[ \tilde{y}_{it} \tilde{x}_{it} ] = \frac{\E[ (\tilde{x}_{it}^{*})^{2}]}{\E[ (\tilde{x}_{it}^{*})^{2}] + \E[ v_{it}^{2} ]} \phi. 
\]
Finally, by $\epsilon_{it} = \tilde{y}_{it} - \beta_{0} \tilde{x}_{it}$, we have $\E[ \tilde{x}_{it} \epsilon_{it} \mid c_{i}] =  \E[ (\tilde{x}_{it}^{*})^{2} \mid c_{i} ] \phi - (\E[ (\tilde{x}_{it}^{*})^{2} \mid c_{i} ] + \E[ v_{it}^{2} ]) \beta_{0}$, which is generally non-zero.

{\em Example 3}. Random coefficients AR model.  Suppose that the true data generating process follows the following random coefficients AR(1) model:
\begin{equation*}
y_{it} = c_{i} y_{i,t-1} + u_{it}, \ c_{i} \indep \{ u_{it} \}_{t \in \mathbb{Z}}, \ | c_{i} | < 1, \ u_{it} \sim N(0,1), i.i.d. 
\end{equation*}
In this case, $\E[ y_{it} \mid y_{i,t-1}, c_{i}] = c_{i} y_{i,t-1}$. It is routine to verify that 
\begin{equation*}
y_{it} \mid c_{i} \sim N(0,1/(1-c_{i}^{2})).
\end{equation*}
Suppose that we incorrectly fit a panel AR(1) model. Here $\tilde{y}_{it} = y_{it} - \E[ y_{it} \mid c_{i} ] = y_{it}$ and the solution of the equation $\E[ y_{i,t-1} (y_{it} - \beta_{0} y_{i,t-1})] = 0$ is given by 
\begin{equation*}
\beta_{0} = \frac{\E[ c_{i} y_{i,t-1}^{2}]}{\E[ y_{i,t-1}^{2} ]} = \frac{\E[ c_{i}/(1-c_{i}^{2})]}{\E[ 1/(1-c_{i}^{2})]}.
\end{equation*}
Here $\epsilon_{it} = y_{it} - \beta_{0} y_{i,t-1}$, and 
\begin{equation*}
\E[ y_{i,t-1} \epsilon_{it} \mid c_{i} ] = \E[ c_{i} y_{i,t-1}^{2} - \beta_{0} y_{i,t-1}^{2} \mid c_{i} ]= \frac{c_{i}-\beta_{0}}{1-c_{i}^{2}},
\end{equation*}
which is non-zero a.s. if $c_{i}$ obeys a continuous distribution. 

\begin{remark}[Interpretation under pseudo-likelihood setting]
 The results in this paper could be interpreted as corresponding to the pseudo-likelihood model. Under the additional assumptions of independence and normality, the resulting (conditional) maximum likelihood estimator (MLE) of $\bm{\beta}$ (given $c_{1},\dots,c_{n}$) is identical to the FE estimator. Thus, the FE estimator defined in (\ref{fe}) can be viewed as a pseudo-MLE.
\end{remark}

\begin{remark}[Discussion on \cite{LGF12}]
\cite{LGF12} made an interesting observation about the pseudo-true parameter when the link function is misspecified for the generalized linear model (see their Corollary 1). That is, the pseudo-true parameter is proportional to the true one up to nonzero scalar. However, their setting is significantly different from ours; first of all in Corollary 1 they assumed that the true model satisfies a generalized linear model but only the link function is misspecified, and only cross section data are available. In our case basically no ``model'' is assumed (and hence the ``true parameter'' is not well-defined in general), so that their result does not extend to our setting. 
\end{remark}

\begin{remark}[Alternative estimators]
In this paper we focus on the FE estimator. This is because the FE estimator is widely used in practice and, as we have shown, the probability limit of the FE estimator under misspecification admits a natural and plausible interpretation, parallel to the linear regression case. There could be alternative estimators; for example, we could consider the average of the individual-wise OLS estimators, i.e., let $\hat{\bbeta}_{i}$ denote the OLS estimator obtained by regressing $y_{it}$ on $(1,\bx_{it}')'$ with each fixed $i$, and consider the estimator $\hat{\bbeta}^{ave} = n^{-1} \sum_{i=1}^{n} \hat{\bbeta}_{i}$. However, this estimator does not share the interpretation that the FE estimator possesses. In some cases the probability limit of $\hat{\bbeta}^{ave}$ happens to be identical to $\bbeta_{0}$, but not in general. To keep tight focus, we only consider the FE estimator in what follows. 
\end{remark}


\section{Asymptotic properties of fixed effects estimator under misspecification}
\label{sec: asymptotics}

In this section, we study the asymptotic properties of the FE estimator under possible model misspecification (i.e., $\E[y_{it} \mid  \bx_{it},c_{i}]$ is not assumed to be additive in $c_{i}$ and $\bx_{it}$, nor linear in $\bx_{it}$). We make the following regularity conditions.

\begin{description}
\item[(A1)] $(c_{i},y_{it},\bx_{it}) \in S \times \mathbb{R} \times \mathbb{R}^{p}$, where $S$ is a Polish space. $\{ (c_{i},y_{i1},\bx_{i1},y_{i2},\bx_{i2},\dots) \}_{i=1}^{\infty}$ is i.i.d., and for each $i \geq 1$, conditional on $c_{i}$, 
$\{ (y_{it},\bx_{it}')' \}_{t=1}^{\infty}$ is  a stationary $\alpha$-mixing process with mixing coefficients $\alpha(k \mid c_{i})$. Assume that there exists a sequence of constants  $\alpha(k)$ such that $\alpha(k \mid c_{i}) \leq \alpha(k)$ a.s. for all $k \geq 1$, and $\sum_{k=1}^{\infty} k \alpha(k)^{\delta/(8+\delta)} < \infty$ for some $\delta > 0$.
\item[(A2)] Define $\tilde{y}_{it} = y_{it} - \E[y_{it} \mid c_{i}]$ and $\tilde{\bx}_{it} = \bx_{it} - \E[\bx_{it} \mid c_{i}]$ (assume that $\E[y_{it} \mid c_{i}]$ and $\E[\bx_{it} \mid c_{i}]$ exist). There exists a constant $M > 0$ such that $\E [ | \tilde{y}_{it} |^{8+\delta} \mid c_{i}] \leq M$ a.s. and $\E[ \| \tilde{\bx}_{it}\|^{8+\delta} \mid c_{i} ] \leq M$ a.s., where $\delta > 0$ is given in (A1). 
\item[(A3)]  Define the matrix $A = \E[ \tilde{\bx}_{it} \tilde{\bx}_{it}']$. Assume that the matrix $A$ is nonsingular. 
\end{description}

In condition (A1), $c_{i}$ is an unobservable, individual-specific random variable allowed to be dependent with $\bx_{it}$ in an arbitrary form. 
Condition (A1) assumes that the observations are independent in the cross section dimension, but allows for dependence in the time dimension conditional on individual effects.  We refer to Section 2.6 of \cite{FY03} for some basic properties of mixing processes. 
Condition (A1) is similar to Condition 1 of \cite{HK11}, and allows for a flexible time series dependence. The mixing condition is also similar to Assumption 1 (iii) of \cite{G11}, while she allowed for cross section dependence. The mixing condition is used only to bound covariances and moments of sums of random variables, and not crucial for the central limit theorem.
Therefore, in principle, it could be replaced by assuming directly such bounds. We assume the mixing condition to make the paper clear.
Note that because of stationarity assumption made in (A1), the marginal distribution of $(c_{i},y_{it},\bx_{it})$ is invariant with respect to $(i,t)$, i.e., $(c_{i},y_{it},\bx_{it}) \stackrel{d}{=} (c_{1},y_{1,1},\bx_{1,1})$.\footnote{The stationarity assumption means that for dynamic models, the initial condition, $y_{i0}$, is  drawn from the stationary distribution, conditional on $c_{i}$.} The stationary assumption rules out time trends, but is needed to well-define the pseudo-true parameter $\beta_{0}$, and maintained in this paper. Extensions to non-stationary cases will need different analysis and are not covered in this paper.
Condition (A2) is a moment condition. As usual, there is a trade-off between the mixing condition and the moment condition.  
Condition (A2) implies that $\E[ | \epsilon_{it} |^{8+\delta} \mid c_{i}] \leq M'$ a.s. for some constant $M' > 0$. 
Condition (A3) is a standard full rank condition. 

Now we introduce some notation. Recall the FE estimator: 
\begin{align}
\hat{\bbeta} &= \left \{ \frac{1}{nT} \sum_{i=1}^{n} \sum_{t=1}^{T} (\bx_{it} - \bar{\bx}_{i}) (\bx_{it} - \bar{\bx}_{i})' \right \}^{-1} \left \{ \frac{1}{nT} \sum_{i=1}^{n} \sum_{t=1}^{T} (\bx_{it} - \bar{\bx}_{i}) (y_{it} - \bar{y}_{i}) \right \}  \notag \\
&=: \hat{A}^{-1} \hat{S}, \label{fe2}
\end{align}
where $\hat{A}=(nT)^{-1} \sum_{i=1}^{n} \sum_{t=1}^{T} (\bm{x}_{it} - \bar{\bm{x}}_{i}) (\bm{x}_{it} - \bar{\bm{x}}_{i})'$ and $\hat{S}=(nT)^{-1} \sum_{i=1}^{n} \sum_{t=1}^{T} (\bm{x}_{it} - \bar{\bm{x}}_{i}) (y_{it} - \bar{y}_{i})$.
Under conditions (A1)-(A3), the matrix $\hat{A}$ on the right side is nonsingular with probability approaching one (see Lemma \ref{lem2} in Appendix \ref{Appendix A}). 

Recall that $\bbeta_{0}$ is defined by (\ref{ptrue}) and $\epsilon_{it}$ is defined by $\epsilon_{it} = \tilde{y}_{it} - \tilde{\bx}_{it}'\bbeta_{0}$ (see the previous section). Define 
\begin{equation*}
B_{T} =  \sum_{| k | \leq T-1} \left ( 1 - \frac{|k|}{T} \right ) \E[ \tilde{\bx}_{1,1} \epsilon_{1,1+k} ], \
D_{T} =  \sum_{| k | \leq T-1} \left ( 1 - \frac{|k|}{T} \right ) \E[ \tilde{\bx}_{1,1} \tilde{\bx}_{1,1+k}' ].
\end{equation*}
Here and in what follows, for the notational convenience, terms like $\E[\tilde{\bx}_{1,1} \epsilon_{1,1+k}]$ for $k < 0$ are understood as $\E[\tilde{\bx}_{1,1-k} \epsilon_{1,1}]$, i.e., 
\[
\E[\tilde{\bx}_{1,1} \epsilon_{1,1+k}] := \E[ \tilde{\bx}_{1,1-k} \epsilon_{1,1} ], \ k < 0.
\]
We shall obey the same convention to other such terms. For example, $\E[ \tilde{\bx}_{1,1} \tilde{\bx}_{1,1+k}' ] := \E[ \tilde{\bx}_{1,1-k} \tilde{\bx}_{1,1}' ]$ for $k < 0$. 
We first note that under conditions (A1)-(A3),  both $B_{T}$ and $D_{T}$ are well behaved in the following sense.

\begin{lemma}
\label{lem1}
Under conditions (A1)-(A3), we have
$\sum_{k=-\infty}^{\infty} |k| \| \E[\tilde{\bx}_{1,1} \epsilon_{1,1+k}] \| < \infty$ and $
\sum_{k=-\infty}^{\infty}\| \E[ \tilde{\bx}_{1,1} \tilde{\bx}_{1,1+k}' ] \|_{\op} < \infty$. 
\end{lemma}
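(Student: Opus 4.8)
The plan is to reduce both statements to a covariance (Davydov-type) inequality for $\alpha$-mixing sequences, applied conditionally on $c_{1}$. First I would record the two elementary centering facts that turn the relevant expectations into covariances. By construction $\E[\tilde{\bx}_{1,t}\mid c_{1}]=\bm{0}$ for every $t$, and from the regression form (\ref{regression}) together with the conditional stationarity in (A1), $\E[\epsilon_{1,t}\mid c_{1}]=0$ for every $t$. Hence, for $k\geq 1$ (the case $k\leq -1$ being symmetric under the stated convention, and $k=0$ being trivially finite: $\E[\tilde{\bx}_{1,1}\epsilon_{1,1}]=\bm 0$ by the definition of $\bbeta_{0}$, and $\E[\tilde{\bx}_{1,1}\tilde{\bx}_{1,1}']=A$ is finite by (A2)), the conditional expectation $\E[\tilde{\bx}_{1,1}\epsilon_{1,1+k}\mid c_{1}]$ equals the conditional covariance of $\tilde{\bx}_{1,1}$ and $\epsilon_{1,1+k}$, two random vectors measurable with respect to $\sigma$-fields separated by a time gap $k$ in the conditionally $\alpha$-mixing process of (A1); similarly for $\E[\tilde{\bx}_{1,1}\tilde{\bx}_{1,1+k}'\mid c_{1}]$.

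Next, working conditionally on $c_{1}$, I would invoke Davydov's covariance inequality (see, e.g., \cite{FY03}, Section 2.6): for scalar random variables $U,V$ measurable with respect to $\sigma$-fields at lag $k$ in a process with conditional mixing coefficient $\alpha(k\mid c_{1})$, one has $|\Cov(U,V\mid c_{1})|\leq 8\,\alpha(k\mid c_{1})^{1-1/p-1/q}\,(\E[|U|^{p}\mid c_{1}])^{1/p}(\E[|V|^{q}\mid c_{1}])^{1/q}$ whenever $1/p+1/q<1$. Taking $p=q=8+\delta$ and applying this entrywise, then using $\alpha(k\mid c_{1})\leq\alpha(k)$ a.s. from (A1), the moment bounds of (A2) (and the bound $\E[|\epsilon_{1,t}|^{8+\delta}\mid c_{1}]\leq M'$ noted after (A2)), and finally taking expectations over $c_{1}$ and moving the norm inside, I obtain constants $C,C'$ depending only on $p,M,M'$ such that, for all $k\geq 1$,
\[
\|\E[\tilde{\bx}_{1,1}\epsilon_{1,1+k}]\|\leq C\,\alpha(k)^{(6+\delta)/(8+\delta)},\qquad
\|\E[\tilde{\bx}_{1,1}\tilde{\bx}_{1,1+k}']\|_{\op}\leq C'\,\alpha(k)^{(6+\delta)/(8+\delta)},
\]
where for the second bound I control the operator norm of the $p\times p$ matrix by its largest entry times $p$.

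Finally I would conclude by summation. Mixing coefficients satisfy $\alpha(k)\leq 1$, and since $(6+\delta)/(8+\delta)\geq\delta/(8+\delta)$ we get $\alpha(k)^{(6+\delta)/(8+\delta)}\leq\alpha(k)^{\delta/(8+\delta)}$. Hence $\sum_{k\geq 1}k\,\|\E[\tilde{\bx}_{1,1}\epsilon_{1,1+k}]\|\leq C\sum_{k\geq 1}k\,\alpha(k)^{\delta/(8+\delta)}<\infty$ by (A1); adding the finite $k=0$ term and the symmetric negative-$k$ terms yields the first claim. Likewise $\sum_{k\neq 0}\|\E[\tilde{\bx}_{1,1}\tilde{\bx}_{1,1+k}']\|_{\op}\leq C'\sum_{k\geq 1}\alpha(k)^{\delta/(8+\delta)}\leq C'\sum_{k\geq 1}k\,\alpha(k)^{\delta/(8+\delta)}<\infty$, and together with $\|A\|_{\op}<\infty$ this gives the second claim. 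The only genuinely delicate point is the conditional application of the covariance inequality — one must verify that, conditional on $c_{1}$, the relevant $\sigma$-fields are precisely those governed by $\alpha(\cdot\mid c_{1})$ and that the inequality is applied to the regular conditional distribution — but this is standard given the conditional stationary-mixing structure of (A1); the remainder is routine bookkeeping with Hölder's inequality and the moments in (A2).
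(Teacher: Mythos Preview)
Your proof is correct and follows essentially the same approach as the paper, which simply states that the lemma is deduced directly from Davydov's inequality (Theorem~\ref{thmA1}) together with condition~(A1); you have merely filled in the details the paper omits, including the conditional application on $c_{1}$, the entrywise bound, and the reduction of the exponent $(6+\delta)/(8+\delta)$ to $\delta/(8+\delta)$ via $\alpha(k)\leq 1$.
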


All the technical proofs for Section 2 are gathered in Appendix \ref{Appendix A}. 

We now state the asymptotic properties of the FE estimator.   
Define 
\begin{equation*}
d_{nT} = 
\begin{cases}
nT, \ & \text{if $\E[ \tilde{\bx}_{it} \epsilon_{it} \mid c_{i} ] = \bm{0}$ a.s.}, \\
n, \ & \text{otherwise}, 
\end{cases}
\qquad \text{and} \qquad
\Sigma_{nT} = \E\left[ \left ( \frac{1}{nT} \sum_{i=1}^{n} \sum_{t=1}^{T} \tilde{\bx}_{it} \epsilon_{it} \right )^{\otimes 2} \right],
\end{equation*}
where recall that $\bm{z}^{\otimes 2} = \bm{z} \bm{z}'$. 

\begin{proposition}
\label{prop1}
Suppose that conditions (A1)-(A3) are satisfied. Letting $T=T_{n} \to \infty$ as $n \to \infty$, we have: 
\begin{align}
&\hat{\bbeta} - \bbeta_{0} + A^{-1} \left \{ \sum_{m=0}^{\infty} T^{-m-1} (D_{T}A^{-1})^{m} \right \} B_{T} \notag \\
&=A^{-1} \left ( \frac{1}{nT} \sum_{i=1}^{n} \sum_{t=1}^{T} \tilde{\bx}_{it} \epsilon_{it} \right ) + O_{\Pr}[n^{-1/2}\max\{d_{nT}^{-1/2},T^{-1}\}]. \label{Bahadur}
\end{align}
Therefore, we have: 
\begin{equation*}
\sqrt{d_{nT}}\left [ \hat{\bbeta} 
- \bbeta_{0} + A^{-1} \left \{ \sum_{m=0}^{\infty} T^{-m-1} (D_{T}A^{-1})^{m} \right \} B_{T} \right ] \stackrel{d}{\to} N(\bm{0},A^{-1} \Sigma A^{-1}),
\end{equation*}
where $\Sigma = \lim_{n \to \infty} (d_{nT_{n}}\Sigma_{nT_{n}})$ (the limit on the right side exists). 
\end{proposition}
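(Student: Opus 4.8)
The plan is to derive the Bahadur-type expansion (\ref{Bahadur}) first and then read off the limit law from it. The starting point is the partial-regression (Frisch--Waugh) form of $\hat{\bbeta}$: subtracting individual means from (\ref{regression}) eliminates $g_{0}(c_{i})$, and since demeaning also removes the conditional means $\E[\bx_{it}\mid c_{i}]$ and $\E[y_{it}\mid c_{i}]$, one obtains the exact identity
\begin{equation*}
\hat{\bbeta}-\bbeta_{0}=\hat{A}^{-1}\left[\frac{1}{nT}\sum_{i=1}^{n}\sum_{t=1}^{T}\tilde{\bx}_{it}\epsilon_{it}-\frac{1}{n}\sum_{i=1}^{n}\bar{\tilde{\bx}}_{i}\bar{\epsilon}_{i}\right],
\end{equation*}
valid on the event that $\hat{A}$ is nonsingular, which has probability tending to one by Lemma \ref{lem2}. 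The whole argument then reduces to controlling the three ingredients $\hat{A}^{-1}$, $S_{n}:=(nT)^{-1}\sum_{i,t}\tilde{\bx}_{it}\epsilon_{it}$, and $n^{-1}\sum_{i}\bar{\tilde{\bx}}_{i}\bar{\epsilon}_{i}$, using throughout that the summands are i.i.d.\ across $i$ and that, conditional on $c_{i}$, each is a stationary $\alpha$-mixing sequence with coefficients dominated by $\alpha(k)$.

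For $\hat{A}$, write $\hat{A}=\hat{A}_{1}-n^{-1}\sum_{i}\bar{\tilde{\bx}}_{i}\bar{\tilde{\bx}}_{i}'$ with $\hat{A}_{1}:=(nT)^{-1}\sum_{i,t}\tilde{\bx}_{it}\tilde{\bx}_{it}'$. Stationarity gives $\E\hat{A}_{1}=A$, and since $\hat{A}_{1}$ is an average of $n$ i.i.d.\ matrices of $O(1)$ variance (the conditional mean $\E[\tilde{\bx}_{i1}\tilde{\bx}_{i1}'\mid c_{i}]$ dominates, the conditional variance being $O(T^{-1})$), $\hat{A}_{1}-A=O_{\Pr}(n^{-1/2})$; likewise $n^{-1}\sum_{i}\bar{\tilde{\bx}}_{i}\bar{\tilde{\bx}}_{i}'=T^{-1}D_{T}+O_{\Pr}(n^{-1/2}T^{-1})$ since $\bar{\tilde{\bx}}_{i}=O_{\Pr}(T^{-1/2})$ and $\E[\bar{\tilde{\bx}}_{i}\bar{\tilde{\bx}}_{i}']=T^{-1}D_{T}$. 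Hence $\hat{A}=A-T^{-1}D_{T}+O_{\Pr}(n^{-1/2})$, and because $\|T^{-1}D_{T}\|_{\op}\to0$ by Lemma \ref{lem1}, for all large $T$ the matrix $A-T^{-1}D_{T}$ is invertible with Neumann expansion $A^{-1}\sum_{m\ge0}T^{-m}(D_{T}A^{-1})^{m}$, so $\hat{A}^{-1}=A^{-1}\sum_{m\ge0}T^{-m}(D_{T}A^{-1})^{m}+O_{\Pr}(n^{-1/2})$. For the demeaning correction, $\E[n^{-1}\sum_{i}\bar{\tilde{\bx}}_{i}\bar{\epsilon}_{i}]=T^{-1}B_{T}$ and the centered part is $O_{\Pr}(n^{-1/2}T^{-1})$. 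Finally, writing $S_{n}=n^{-1}\sum_{i}\xi_{i}$ with $\xi_{i}:=T^{-1}\sum_{t}\tilde{\bx}_{it}\epsilon_{it}$, the rate of $S_{n}$ obeys the advertised dichotomy: if $\E[\tilde{\bx}_{it}\epsilon_{it}\mid c_{i}]=\bm{0}$ a.s.\ then a Rosenthal-type inequality for the conditionally mean-zero mixing sequence gives $\Var(\xi_{i})=O(T^{-1})$, hence $\|S_{n}\|=O_{\Pr}((nT)^{-1/2})$; otherwise $\xi_{i}$ has a nondegenerate (mean-zero) limit $\E[\tilde{\bx}_{i1}\epsilon_{i1}\mid c_{i}]$ and $\|S_{n}\|=O_{\Pr}(n^{-1/2})$. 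In either case $\|S_{n}\|=O_{\Pr}(d_{nT}^{-1/2})$.

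Substituting these into the identity and multiplying out, the deterministic cross term $\hat{A}^{-1}\cdot T^{-1}B_{T}$ equals exactly $A^{-1}\{\sum_{m\ge0}T^{-m-1}(D_{T}A^{-1})^{m}\}B_{T}$, i.e.\ the bias carried to the left of (\ref{Bahadur}); the leading stochastic term is $A^{-1}S_{n}$; and every remaining product is of the form $O_{\Pr}(n^{-1/2})O_{\Pr}(d_{nT}^{-1/2}+T^{-1})$, $O_{\Pr}(T^{-1})O_{\Pr}(d_{nT}^{-1/2})$, or $O_{\Pr}(n^{-1/2}T^{-1})$, and a short case check using $d_{nT}\in\{n,nT\}$ (so $d_{nT}\ge n$) shows each is $O_{\Pr}(n^{-1/2}\max\{d_{nT}^{-1/2},T^{-1}\})$, giving (\ref{Bahadur}). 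For the limit law, the $\xi_{i}$ form a row-wise i.i.d.\ triangular array (since $T=T_{n}$) with mean zero and $\Var((d_{nT}/n)^{1/2}\xi_{i})=d_{nT}\Sigma_{nT}$, and this variance converges: if $d_{nT}=nT$ then $d_{nT}\Sigma_{nT}=\sum_{|k|\le T-1}(1-|k|/T)\E[\tilde{\bx}_{1,1}\epsilon_{1,1}\epsilon_{1,1+k}\tilde{\bx}_{1,1+k}']\to\sum_{k=-\infty}^{\infty}\E[\tilde{\bx}_{1,1}\epsilon_{1,1}\epsilon_{1,1+k}\tilde{\bx}_{1,1+k}']$ by a summability estimate in the spirit of Lemma \ref{lem1}, while if $d_{nT}=n$ then $d_{nT}\Sigma_{nT}=\E[\xi_{1}\xi_{1}']\to\E[(\E[\tilde{\bx}_{1,1}\epsilon_{1,1}\mid c_{1}])^{\otimes2}]$ by uniform integrability and the conditional ergodic theorem. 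Calling the limit $\Sigma$ and using a uniform $(2+\eta)$-th moment bound on $(d_{nT}/n)^{1/2}\xi_{1}$ (some $\eta>0$) under (A1)--(A2), the Lyapunov CLT for triangular arrays gives $\sqrt{d_{nT}}\,S_{n}\stackrel{d}{\to}N(\bm{0},\Sigma)$, hence $\sqrt{d_{nT}}\,A^{-1}S_{n}\stackrel{d}{\to}N(\bm{0},A^{-1}\Sigma A^{-1})$ as $A$ is symmetric; since $\sqrt{d_{nT}}\cdot n^{-1/2}\max\{d_{nT}^{-1/2},T^{-1}\}\to0$ in both regimes, (\ref{Bahadur}) and Slutsky's theorem finish the proof.

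The main obstacle is not conceptual but the uniform bookkeeping of stochastic orders: one must verify that the $d_{nT}$-dichotomy for $\|S_{n}\|$ is sharp, that every error term in the multiplied-out expansion collapses into $n^{-1/2}\max\{d_{nT}^{-1/2},T^{-1}\}$, and that both the Lyapunov condition and the existence of $\lim d_{nT}\Sigma_{nT}$ hold for the $n$-dependent triangular array. Conditions (A1)--(A2) enter precisely here: they are exactly what supplies the covariance-summability and Rosenthal-type moment inequalities for the conditionally stationary mixing sequences on which all of the above estimates rest.
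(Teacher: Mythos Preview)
Your proof is correct and follows essentially the same route as the paper: you invoke the same partial-regression identity $\hat{\bbeta}-\bbeta_{0}=\hat{A}^{-1}(\hat{S}_{1}-\hat{S}_{2})$, the same Neumann expansion $\hat{A}^{-1}=A^{-1}\sum_{m\ge0}T^{-m}(D_{T}A^{-1})^{m}+O_{\Pr}(n^{-1/2})$, the same treatment $\hat{S}_{2}=T^{-1}B_{T}+O_{\Pr}(n^{-1/2}T^{-1})$, and the same rate $\hat{S}_{1}=O_{\Pr}(d_{nT}^{-1/2})$ to assemble (\ref{Bahadur}). The only cosmetic difference is that for the limiting distribution you run a single triangular-array Lyapunov CLT on the i.i.d.\ summands $\xi_{i}$, whereas the paper first proves $\sqrt{nT}(\hat{S}_{1}-\E[\hat{S}_{1}\mid\{c_{i}\}])\stackrel{d}{\to}N(\bm{0},V_{1})$ as a separate lemma and then splits into the two cases at the end; both arguments rely on the same Yokoyama-type moment bound and yield the same $\Sigma$.
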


We stress that Proposition \ref{prop1} holds without any specific growth condition on $T$. Also note that this proposition implies that $\hat{\bbeta} \stackrel{\Pr}{\to} \bbeta_{0}$ as long as $T=T_{n} \to \infty$. 

We discuss some implications of Proposition \ref{prop1}. First, Proposition \ref{prop1} shows that the FE estimator has the bias term of the form 
\begin{align}
&-A^{-1} \left \{ \sum_{m=0}^{\infty} T^{-m-1} (D_{T}A^{-1})^{m} \right \} B_{T} \notag \\
&\quad = - \frac{1}{T} A^{-1}B_{T} - \frac{1}{T^{2}} A^{-1}D_{T}A^{-1}B_{T} - \cdots - \frac{1}{T^{m}} A^{-1} (D_{T}A^{-1})^{m-1} B_{T} - \cdots, \label{ipbias}
\end{align}
which, following the literature, we call the ``incidental parameters bias''. There are two sources that contribute to the incidental parameters bias. 
The main source is conditional correlation between $\bx_{is}$ and $\epsilon_{it}$ for $s \neq t$ conditional on $c_{i}$, which arises  from using $\bar{\bx}_{i}$ instead of $\E[\bx_{i1} \mid c_{i}]$ in $\hat{S}$. Another source, which only contributes to higher order terms, is conditional correlation between $\bx_{is}$ and $\bx_{it}$ for $s \neq t$ conditional on $c_{i}$, which arises from using $\bar{\bx}_{i}$ instead of $\E[\bx_{i1} \mid c_{i}]$ in $\hat{A}$. 
Proposition \ref{prop1} makes explicit the incidental parameters bias of any order, which appears to be new even in the correct specification case (but under the current set of assumptions).\footnote{\cite{DJ10} considered higher order bias corrections for the panel AR model with exogenous variables, fixed effects, and unrestricted initial observations. The assumptions behind their paper are different from ours: here more general models (not restricted to panel AR models, and allowing for model misspecification) are covered, but \cite{DJ10} covered non-stationary cases. } The expansion (\ref{Bahadur}) is important in investigating the asymptotic properties of the cross section bootstrap in Section 4.  

Second, Proposition \ref{prop1} shows that, aside from the incidental parameters bias, the rate of convergence of the FE estimator depends on the degree of model misspecification, i.e., after the incidental parameters bias is completely removed, the FE estimator is $\sqrt{nT}$-consistent if $\E[ \tilde{\bx}_{it}\epsilon_{it} \mid c_{i} ] = \bm{0}$ a.s. and $\sqrt{n}$-consistent otherwise. In fact, the rate of convergence of the FE estimator (after the incidental parameters bias is  removed) depends on the 
order of the covariance matrix of the term
\begin{equation*}
\frac{1}{nT}\sum_{i=1}^{n} \sum_{t=1}^{T} \tilde{\bx}_{it} \epsilon_{it} = \frac{1}{nT} \sum_{i=1}^{n} \sum_{t=1}^{T} (\tilde{\bx}_{it} \epsilon_{it} - \E[\tilde{\bx}_{i1} \epsilon_{i1} \mid c_{i}]) + \frac{1}{n} \sum_{i=1}^{n} \E[ \tilde{\bx}_{i1} \epsilon_{i1} \mid c_{i}].
\end{equation*}
By this decomposition,
\begin{align*}
\Sigma_{nT} &= \E \left [ \left \{ \frac{1}{nT} \sum_{i=1}^{n} \sum_{t=1}^{T} (\tilde{\bx}_{it} \epsilon_{it} - \E[\tilde{\bx}_{i1} \epsilon_{i1} \mid c_{i}]) \right \}^{\otimes 2} \right ] + \E \left [ \left ( \frac{1}{n} \sum_{i=1}^{n} \E[ \tilde{\bx}_{i1} \epsilon_{i1} \mid c_{i}] \right )^{\otimes 2} \right ] \\
&=\frac{1}{nT} \E \left [ \left \{ \frac{1}{\sqrt{T}} \sum_{t=1}^{T} (\tilde{\bx}_{1t} \epsilon_{1t} - \E[\tilde{\bx}_{i1} \epsilon_{i1} \mid c_{1}]) \right \}^{\otimes 2} \right ] +
\frac{1}{n} \E[\E[\tilde{\bx}_{1,1} \epsilon_{1,1} \mid c_{1}]^{\otimes 2}].
\end{align*}
Since $\{ (y_{it},\bx_{it}')' \}_{t=1}^{\infty}$ is weakly dependent conditional on $c_{i}$, the first term is $O\{ (nT)^{-1} \}$. On the other hand, the second term is zero if $\E[ \tilde{\bx}_{it} \epsilon_{it} \mid c_{i} ] = \bm{0}$ a.s., but $\asymp n^{-1}$ otherwise, which shows that $\Sigma_{nT}  = O\{ (nT)^{-1} \}$ if $\E[ \tilde{\bx}_{it} \epsilon_{it} \mid c_{i} ] = \bm{0}$ a.s. and $\| \Sigma_{nT} \|_{\op} \asymp n^{-1}$ otherwise. Intuitively, unless $\E[ \tilde{\bx}_{it} \epsilon_{it} \mid c_{i} ] = \bm{0}$ a.s., $\tilde{\bx}_{it} \epsilon_{it}$ is unconditionally equicorrelated  across $t$, so that 
$\Sigma_{nT}$ has the slow rate $n^{-1}$ (if $\E[ \tilde{\bx}_{it} \epsilon_{it} \mid c_{i} ] = \bm{0}$ a.s., by condition (A1),  the covariance between $\tilde{\bx}_{is} \epsilon_{is}$ and $\tilde{\bx}_{it}\epsilon_{it}$ converges to zero sufficiently fast as $|s-t| \to \infty$, so $\Sigma_{nT}$ has the faster rate $(nT)^{-1}$). 

In some cases, at least theoretically, it may happen that $\E[ \tilde{\bx}_{it} \epsilon_{it} \mid c_{i} ] \neq \bm{0}$ with  positive probability but for some nonzero $\bm{r} \in \mathbb{R}^{p}$, $\bm{r}'A^{-1} \E[ \tilde{\bx}_{it} \epsilon_{it} \mid c_{i} ] = 0$ a.s., which corresponds to the case where the matrix $\E[\E[\tilde{\bx}_{1,1} \epsilon_{1,1} \mid c_{1}]^{\otimes 2}]$ is nonzero but degenerate.  In such a case, we have the expansion
\begin{align}
&\bm{r}'\left [ \hat{\bbeta} - \bbeta_{0} + A^{-1} \left \{ \sum_{m=0}^{\infty} T^{-m-1} (D_{T}A^{-1})^{m} \right \} B_{T} \right] \notag \\
&=\bm{r}'A^{-1} \left \{ \frac{1}{nT} \sum_{i=1}^{n} \sum_{t=1}^{T} (\tilde{\bx}_{it} \epsilon_{it} - \E[\tilde{\bx}_{i1} \epsilon_{i1} \mid c_{i}]) \right \} + 
O_{\Pr}[n^{-1/2}\max \{ n^{-1/2}, T^{-1} \}], \label{expansion2}
\end{align}
where the leading term of the right side multiplied by $\sqrt{nT}$ is asymptotically normal with mean zero and variance $\lim_{n \to \infty}[(nT_{n})\bm{r}'A^{-1} \Sigma_{nT_{n}}A^{-1} \bm{r}] < \infty$. This shows that, after subtracting the incidental parameters bias, the FE estimator may have different rates of convergence within its linear combinations. Moreover, the remainder term in the expansion (\ref{expansion2}) has the constant term of order $n^{-1}$, so that the extra bias term of order $n^{-1}$ appears in such a case.\footnote{By the proof of Proposition \ref{prop1}, it is shown that the remainder term in the expansion (\ref{expansion2}) is in fact further expanded as
$-n^{-1}\bm{r}'A^{-1}\E[\E[ \tilde{\bx}_{1,1}\tilde{\bx}_{1,1}' \mid c_{1}] A^{-1}\E[ \tilde{\bx}_{1,1}\epsilon_{1,1}' \mid c_{1}]] + O_{\Pr}[ n^{-1/2} \max \{ n^{-1},T^{-1} \}]$.}
By this, if additionally $T/n$ goes to some positive constant, the limiting normal distribution of the right side on (\ref{expansion2}) multiplied by $\sqrt{nT}$ has a nonzero bias in the mean of which the size is proportional to $\sqrt{T/n}$. However, such a case seems to be rather exceptional and we mainly focus on the case where the matrix $\Sigma$ is always nonsingular (i.e. $\Sigma$ is nonsingular in either case of $\E[ \tilde{\bx}_{it} \epsilon_{it} \mid c_{i} ] = \bm{0}$ a.s. or not).

It is possible to have an alternative expression of the bias term of order $T^{-1}$. 

\begin{corollary}
\label{cor1} Suppose that conditions (A1)-(A3) are satisfied. Then we have: $$B_{T} =  {\textstyle \sum}_{k=-\infty}^{\infty}  \E[\tilde{\bx}_{1,1} \epsilon_{1,1+k}] + O(T^{-1}) =: B+O(T^{-1}).$$
In particular, the bias term of order $T^{-1}$ in the expansion (\ref{Bahadur}) is rewritten as $T^{-1} A^{-1} B$. 
\end{corollary}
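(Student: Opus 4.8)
The plan is to compare $B_T$ term by term with the infinite series $B := \sum_{k=-\infty}^{\infty} \E[\tilde{\bx}_{1,1}\epsilon_{1,1+k}]$. First I would note that $B$ is well defined: Lemma \ref{lem1} gives $\sum_{k} |k| \, \| \E[\tilde{\bx}_{1,1}\epsilon_{1,1+k}] \| < \infty$, which, together with finiteness of the $k=0$ term (from (A2)--(A3) via Cauchy--Schwarz), yields $\sum_{k} \| \E[\tilde{\bx}_{1,1}\epsilon_{1,1+k}] \| < \infty$, so the series converges absolutely. Subtracting $B$ from the definition of $B_T$,
\begin{equation*}
B_T - B = - \frac{1}{T} \sum_{|k| \leq T-1} |k| \, \E[\tilde{\bx}_{1,1}\epsilon_{1,1+k}] - \sum_{|k| \geq T} \E[\tilde{\bx}_{1,1}\epsilon_{1,1+k}],
\end{equation*}
so it suffices to show that each of the two pieces on the right is $O(T^{-1})$ in Euclidean norm.

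For the first piece (the defect from replacing the triangular weights $1 - |k|/T$ by $1$), the triangle inequality bounds its norm by $T^{-1} \sum_{k=-\infty}^{\infty} |k| \, \| \E[\tilde{\bx}_{1,1}\epsilon_{1,1+k}] \|$, which is $O(T^{-1})$ since the sum is finite by Lemma \ref{lem1}. For the second piece (the tail beyond $|k| = T-1$), I would use that $1 \leq |k|/T$ on the index set $\{ |k| \geq T \}$, so its norm is at most $\sum_{|k| \geq T} \| \E[\tilde{\bx}_{1,1}\epsilon_{1,1+k}] \| \leq T^{-1} \sum_{|k| \geq T} |k| \, \| \E[\tilde{\bx}_{1,1}\epsilon_{1,1+k}] \| \leq T^{-1} \sum_{k=-\infty}^{\infty} |k| \, \| \E[\tilde{\bx}_{1,1}\epsilon_{1,1+k}] \|$, again $O(T^{-1})$ by Lemma \ref{lem1}. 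Combining the two bounds gives $\| B_T - B \| = O(T^{-1})$, which is the first assertion.

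For the ``in particular'' claim, I would substitute $B_T = B + O(T^{-1})$ into the bias expansion (\ref{ipbias}). Its $m=0$ term is $T^{-1} A^{-1} B_T = T^{-1} A^{-1} B + O(T^{-2})$, the error being $O(T^{-2})$ because $\| A^{-1} \|_{\op} < \infty$ by (A3). The remaining terms ($m \geq 1$) are themselves $O(T^{-2})$: their sum has norm at most $\| A^{-1} \|_{\op} \| B_T \| \sum_{m \geq 1} T^{-m-1} \| D_T A^{-1} \|_{\op}^{m}$, and $\| D_T A^{-1} \|_{\op}$ is bounded uniformly in $T$ since $\| D_T \|_{\op} \leq \sum_{k} \| \E[\tilde{\bx}_{1,1}\tilde{\bx}_{1,1+k}'] \|_{\op} < \infty$ by the second bound of Lemma \ref{lem1}. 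Hence the order-$T^{-1}$ part of the bias term appearing in (\ref{Bahadur}) is exactly $T^{-1} A^{-1} B$, the $O(T^{-2})$ discrepancy merging with the higher-order ($m \geq 1$) bias terms.

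I do not expect any genuine obstacle: the entire argument reduces to the single summability estimate already established in Lemma \ref{lem1}. The only point worth emphasizing is that two distinct approximations — replacing the weights $1 - |k|/T$ by $1$, and extending the truncated sum over $|k| \leq T-1$ to all of $\mathbb{Z}$ — both contribute at the common order $T^{-1}$, and both are simultaneously dominated by $T^{-1} \sum_{k} |k| \, \| \E[\tilde{\bx}_{1,1}\epsilon_{1,1+k}] \|$.
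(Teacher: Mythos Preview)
Your proof is correct and essentially identical to the paper's: both split $B_T - B$ into the triangular-weight defect and the tail sum, and bound each by $T^{-1}\sum_{k}|k|\,\|\E[\tilde{\bx}_{1,1}\epsilon_{1,1+k}]\|$ via Lemma~\ref{lem1}. Your treatment of the ``in particular'' claim is actually more explicit than the paper's, which simply says the result ``implies the desired result'' without spelling out the $O(T^{-2})$ bookkeeping for the $m\geq 1$ terms.
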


Finally, we provide some comments on the relation to the previous work. 

\begin{remark}[Relation with \cite{L10}]
Proposition \ref{prop1} is a nontrivial extension of Theorem 2 of \cite{L10}, in which he established the asymptotic properties of the FE estimator for panel AR models with exogenous variables allowing for lag order misspecification. Proposition \ref{prop1} allows for a more general form of model misspecification, including lag order misspecification as a special case, and exhausts the incidental parameters bias of any order.
\end{remark}

\begin{remark}[Relation with \cite{H07b}]
\label{rem: Hansen}
Proposition \ref{prop1} is related to \cite{H07b}. \cite{H07b} considered a  model $y_{it} = \bx_{it}'\bbeta_{0} + \epsilon_{it}$ with
$\E[ \epsilon_{it} \mid \bx_{i1},\dots,\bx_{iT} ]=0$ for all $1 \leq t \leq T$ or $\E[ \bx_{it} \epsilon_{it} ] = \bm{0}$, and showed that 
the OLS estimator is $\sqrt{n}$-consistent if there is no condition on time series dependence and $\sqrt{nT}$-consistent if a mixing condition is satisfied for time series dependence. What matters for the rate of convergence of the OLS estimator in \cite{H07b} is the order of the covariance matrix of the term $(nT)^{-1} \sum_{i=1}^{n} \sum_{t=1}^{T} \bx_{it} \epsilon_{it}$, which is $O(n^{-1})$ in  the ``no mixing'' case and $O\{ (nT)^{-1} \}$ in the mixing case. While there is a similarity, Proposition \ref{prop1} is not nested to his results in several aspects. First, in Proposition \ref{prop1}, the rate of convergence of the FE estimator (after the incidental parameters bias is removed) depends on the degree of model misspecification (i.e., $\E[ \bx_{it} \epsilon_{it} \mid c_{i}] = \bm{0}$ a.s. or not), rather than the assumption on time series dependence. Second, while his model covers panel data models with individual effects by considering $(y_{it},\bx_{it}')'$ to be transformed variables $(y_{it}-\bar{y}_{i},\bx_{it}'-\bar{\bx}_{i}')'$, the mixing assumption is not satisfied for the transformed variables as he admitted in footnote 3, so that his Theorem 3 does not apply to models with individual effects. Additionally,  his Assumption 3 essentially requires that $\E[ \bx_{is} \epsilon_{it} \mid c_{i}] = \bm{0}$ for all $1 \leq s,t \leq T$ under our setting (if we think of $(y_{it},\bx_{it}')'$ as transformed variables $(y_{it}-\bar{y}_{i},\bx_{it}'-\bar{\bx}_{i}')'$), so that his results do not cover the case where the incidental parameters bias appears. On the other hand, while we exclusively assume that $\{ (y_{it},\bx_{it}')' \}_{t=1}^{\infty}$ is mixing conditional on $c_{i}$, \cite{H07b} covered the case where no such mixing condition is satisfied. Therefore, the two papers are complementary in nature. 
\end{remark}

\begin{remark}[Relation with \cite{AH06}]
\cite{AH06} obtained a general incidental parameters bias formula for nonlinear panel data models, allowing for potential model misspecification, when $n/T$ is going to some constant. Their general result could be applied to the present setting in the case where $\E[ \tilde{\bx}_{it} \epsilon_{it} \mid c_{i} ]=\bm 0$. However, the stochastic expansion (\ref{Bahadur}) is not derived in \cite{AH06}; (\ref{Bahadur}) exhausts the incidental parameters bias up to infinite order, and covers the case where $\E[ \tilde{\bx}_{it} \epsilon_{it} \mid c_{i} ] \neq \bm 0$. This expansion is also the key for studying  the properties of cross section bootstrap. 
\end{remark}

\section{Inference}
\label{sec: inference}

\subsection{Bias correction}

By Proposition \ref{prop1}, the FE estimator has the bias of order $T^{-1}$. 
In many econometric applications, $T$ is typically smaller than $n$, so that the normal approximation neglecting the bias may not be accurate in either case of $\E[ \tilde{\bx}_{it} \epsilon_{it} \mid c_{i}] = \bm{0}$ or $\E[\tilde{\bx}_{it} \epsilon_{it} \mid c_{i}] \neq \bm{0}$. 
Therefore, the first step to make inference on $\bbeta_{0}$ is to remove the bias of order $T^{-1}$ and reduce the order of the bias to $T^{-2}$.  Under model misspecification,  bias reduction methods that depend on specific models (such as panel AR models) may not work properly. Instead, we can use general-purpose bias reduction methods proposed in the recent nonlinear panel data literature. For example, the half-panel jackknife (HPJ) proposed by \cite{DJ09} is able to remove the bias of order $T^{-1}$ even under model misspecification. 
Suppose that $T$ is even. 
Let $S_{1} = \{ 1, \dots , T/2 \}$ and $S_{2} = \{ T/2+1, \dots, T \}$. For $l=1,2$, construct the FE estimator $\hat{\bbeta}_{S_{l}}$ based on the split sample 
$\{ (y_{it},\bx_{it}')' : i=1,\dots,n; t \in S_{l} \}$. Then the HPJ estimator is defined by $\hat{\bbeta}_{1/2} = 2 \hat{\bbeta} - (\hat{\bbeta}_{S_{1}}+\hat{\bbeta}_{S_{2}})/2$. Using the expansion (\ref{Bahadur}) and Corollary \ref{cor1}, as $T=T_{n} \to \infty$, we have the expansion
\begin{equation}
\hat{\bbeta}_{1/2} - \bbeta_{0} + O(T^{-2}) 
=A^{-1} \left ( \frac{1}{nT} \sum_{i=1}^{n} \sum_{t=1}^{T} \tilde{\bx}_{it} \epsilon_{it} \right ) + O_{\Pr}[n^{-1/2}\max\{d_{nT}^{-1/2},T^{-1}\}], 
\label{BahadurH}
\end{equation}
so that the bias is reduced to $O(T^{-2})$ in either case. Therefore, we have $\sqrt{d_{nT}} (\hat{\bbeta}_{1/2} - \bbeta_{0}) \stackrel{d}{\to} N(\bm{0},A^{-1} \Sigma A^{-1})$ provided that $\sqrt{d_{nT}}/T^{2} \to 0$. 
\cite{DJ09} proposed other automatic bias reduction methods applicable to general nonlinear panel data models. 
Their bias reduction methods are basically applicable to the model misspecification case.\footnote{For example, the higher order bias correction methods proposed in \cite{DJ09} could be adapted here; such higher order bias correction would be a good option when $n/T$ is large.} 

Alternatively, a direct approach to bias correction is to analytically estimate the bias term. In this case, we typically estimate the first order bias term $-A^{-1}B$ by using the technique of HAC covariance matrix estimation \citep[see][]{HK11}. Moreover, another alternative approach is to use bias reducing priors on individual effects \citep[see, for example,][and references therein]{AB09}. See also \cite{AH07} for a review on bias correction for fixed effects estimators in nonlinear panel data models.

\subsection{Clustered covariance matrix estimator}

By the previous discussion, a bias corrected estimator $\tilde{\bbeta}$ typically has the expansion
\begin{equation}
\tilde{\bbeta} - \bbeta_{0} + O(T^{-2}) 
=A^{-1} \left ( \frac{1}{nT} \sum_{i=1}^{n} \sum_{t=1}^{T} \tilde{\bx}_{it} \epsilon_{it} \right ) + O_{\Pr}[n^{-1/2}\max\{d_{nT}^{-1/2},T^{-1}\}]. \label{expansion3}
\end{equation}
Given this expansion, provided that $\sqrt{d_{nT}}/T^{2} \to 0$, the distribution of $\tilde{\bbeta}$ can be approximated by $N(\bbeta_{0},A^{-1}\Sigma_{nT}A^{-1})$. 
Statistical inference on $\bbeta_{0}$ can be implemented by using this normal approximation. 
As usual, since the matrices $A$ and $\Sigma_{nT}$ are unknown, we have to replace them by suitable estimators. 
A natural estimator of $A$ is $\hat{A}$ defined in (\ref{fe2}), which is in fact consistent (i.e., $\hat{A} \stackrel{\Pr}{\to} A$) as long as $T=T_{n} \to \infty$ (see Lemma \ref{lem2} in Appendix \ref{Appendix A}). 
We thus focus on the problem of estimating the matrix $\Sigma_{nT}$. 
We consider the estimator suggested by \cite{A87}:  
\begin{equation*}
\hat{\Sigma}_{nT} = \frac{1}{n^{2}}\sum_{i=1}^{n} \left \{ \frac{1}{T}\sum_{t=1}^{T} (\bx_{it}-\bar{\bx}_{i})\hat{\epsilon}_{it} \right \}^{\otimes 2},
\end{equation*}
where $\hat{\epsilon}_{it} = y_{it} - \bar{y}_{i} - (\bx_{it}-\bar{\bx}_{i})'\tilde{\bbeta}$ and $\tilde{\bbeta}$ is a suitable estimator of $\bbeta_{0}$. Proposition \ref{prop2} establishes the rate of convergence of $\hat{\Sigma}_{nT}$. All the technical proofs of this section are gathered in Appendix \ref{Appendix B}.

\begin{proposition}
\label{prop2}
Suppose that conditions (A1)-(A3) are satisfied. Let $\tilde{\bbeta}$ be any estimator of $\bbeta_{0}$ such that $\| \tilde{\bbeta} - \bbeta_{0} \| = O_{\Pr}[ \max \{ d_{nT}^{-1/2}, T^{-1}  \}]$. Letting $T=T_{n} \to \infty$ as $n \to \infty$, we have:
\begin{equation*}
\hat{\Sigma}_{nT} = \Sigma_{nT} + O_{\Pr}[\{ \max \{ n^{-1/2}T^{-1} d_{nT}^{-1/2},n^{-1/2}d_{nT}^{-1} \}].
\end{equation*}
In particular, as long as $T=T_{n} \to \infty$, we have $\| \hat{\Sigma}_{nT} - \Sigma_{nT} \|_{\op} = o_{\Pr}(d_{nT}^{-1})$.
\end{proposition}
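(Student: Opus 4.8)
The plan is to substitute the regression form (\ref{regression}) into the residual $\hat\epsilon_{it}$, so that $\hat\Sigma_{nT}$ becomes an ``oracle'' quadratic form in $\tilde\bx_{it}\epsilon_{it}$ plus errors coming from (i) estimating $\bbeta_{0}$ by $\tilde\bbeta$ and (ii) the within-group demeaning, and then to compare the oracle form with $\Sigma_{nT}$ using the i.i.d.\ structure across $i$. Since $\E[y_{it}\mid c_{i}]$ and $\E[\bx_{it}\mid c_{i}]$ do not depend on $t$, within-group demeaning is unaffected by subtracting them, so $y_{it}-\bar y_{i}=\tilde y_{it}-\bar{\tilde y}_{i}$ and $\bx_{it}-\bar\bx_{i}=\tilde\bx_{it}-\bar{\tilde\bx}_{i}$; with $\tilde y_{it}=\tilde\bx_{it}'\bbeta_{0}+\epsilon_{it}$ this gives $\hat\epsilon_{it}=(\epsilon_{it}-\bar\epsilon_{i})-(\tilde\bx_{it}-\bar{\tilde\bx}_{i})'(\tilde\bbeta-\bbeta_{0})$. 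Set $u_{i}=T^{-1}\sum_{t=1}^{T}\tilde\bx_{it}\epsilon_{it}$, $v_{i}=u_{i}-\bar{\tilde\bx}_{i}\bar\epsilon_{i}$ and $\hat A_{i}=T^{-1}\sum_{t=1}^{T}(\tilde\bx_{it}-\bar{\tilde\bx}_{i})^{\otimes 2}$; then the $i$-th cluster sum $T^{-1}\sum_{t}(\bx_{it}-\bar\bx_{i})\hat\epsilon_{it}$ equals $v_{i}-\hat A_{i}(\tilde\bbeta-\bbeta_{0})$, so that
\begin{multline*}
\hat\Sigma_{nT}=\frac1{n^{2}}\sum_{i=1}^{n}v_{i}^{\otimes 2}-\frac1{n^{2}}\sum_{i=1}^{n}\bigl\{v_{i}(\tilde\bbeta-\bbeta_{0})'\hat A_{i}+\hat A_{i}(\tilde\bbeta-\bbeta_{0})v_{i}'\bigr\}\\
+\frac1{n^{2}}\sum_{i=1}^{n}\hat A_{i}(\tilde\bbeta-\bbeta_{0})(\tilde\bbeta-\bbeta_{0})'\hat A_{i}.
\end{multline*}

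The last two sums are controlled by moment bounds valid under (A1)--(A2). From $\|\hat A_{i}\|_{\op}\le 4T^{-1}\sum_{t}\|\tilde\bx_{it}\|^{2}$ and (A2), Markov's inequality yields $n^{-1}\sum_{i}\|\hat A_{i}\|_{\op}^{2}=O_{\Pr}(1)$. Conditioning on $c_{i}$ and applying second- and fourth-moment inequalities for conditionally $\alpha$-mixing sums --- whose hypotheses are (A1)--(A2), after a Cauchy--Schwarz bound on the $(4+\delta/2)$-th conditional moment of $\tilde\bx_{it}\epsilon_{it}$ --- one obtains $\E[\|u_{i}\|^{2}]=O(n/d_{nT})$ and $\E[\|u_{i}\|^{4}]=O((n/d_{nT})^{2})$; these are $O(T^{-1})$ and $O(T^{-2})$ in the conditionally centered case $\E[\tilde\bx_{it}\epsilon_{it}\mid c_{i}]=\bm{0}$ a.s.\ (where $d_{nT}=nT$) and $O(1)$ otherwise ($d_{nT}=n$). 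Using $\E[\tilde\bx_{it}\mid c_{i}]=\bm{0}$ and $\E[\epsilon_{it}\mid c_{i}]=0$ one also gets $\E[\|\bar{\tilde\bx}_{i}\bar\epsilon_{i}\|^{2}]=O(T^{-2})$, hence $\E[\|v_{i}\|^{2}]=O(n/d_{nT})$. Combining these with the hypothesis $\|\tilde\bbeta-\bbeta_{0}\|=O_{\Pr}[\max\{d_{nT}^{-1/2},T^{-1}\}]$ and Cauchy--Schwarz over $i$, the second sum is $O_{\Pr}[n^{-1/2}d_{nT}^{-1/2}\max\{d_{nT}^{-1/2},T^{-1}\}]=O_{\Pr}[\max\{n^{-1/2}d_{nT}^{-1},n^{-1/2}T^{-1}d_{nT}^{-1/2}\}]$, which is of the order claimed in the proposition, and the third sum is $O_{\Pr}[n^{-1}\max\{d_{nT}^{-1},T^{-2}\}]$, of strictly smaller order whether $d_{nT}=nT$ or $d_{nT}=n$.

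For the first sum, expand $v_{i}^{\otimes 2}=u_{i}^{\otimes 2}-u_{i}(\bar{\tilde\bx}_{i}\bar\epsilon_{i})'-(\bar{\tilde\bx}_{i}\bar\epsilon_{i})u_{i}'+(\bar{\tilde\bx}_{i}\bar\epsilon_{i})^{\otimes 2}$. By Cauchy--Schwarz over $i$ together with $\E[\|u_{i}\|^{2}]=O(n/d_{nT})$ and $\E[\|\bar{\tilde\bx}_{i}\bar\epsilon_{i}\|^{2}]=O(T^{-2})$, the two mixed averages are each $O_{\Pr}(n^{-1/2}T^{-1}d_{nT}^{-1/2})$ and the $(\bar{\tilde\bx}_{i}\bar\epsilon_{i})^{\otimes 2}$ average is $O_{\Pr}(n^{-1}T^{-2})$, of smaller order. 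Finally, the $u_{i}$ are i.i.d.\ across $i$ with $\E[u_{i}]=\E[\tilde\bx_{i1}\epsilon_{i1}]=\bm{0}$ by the definition of $\bbeta_{0}$ in (\ref{ptrue}), so $\Sigma_{nT}=\E[(n^{-1}\sum_{i}u_{i})^{\otimes 2}]=n^{-1}\E[u_{1}^{\otimes 2}]$ and
\[
\frac1{n^{2}}\sum_{i=1}^{n}u_{i}^{\otimes 2}-\Sigma_{nT}=\frac1n\Bigl(\frac1n\sum_{i=1}^{n}u_{i}^{\otimes 2}-\E[u_{1}^{\otimes 2}]\Bigr);
\]
the bracketed quantity is an average of i.i.d.\ centered matrices, so by Chebyshev its norm is $O_{\Pr}(n^{-1/2}(\E[\|u_{1}\|^{4}])^{1/2})=O_{\Pr}(n^{1/2}d_{nT}^{-1})$, making this contribution $O_{\Pr}(n^{-1/2}d_{nT}^{-1})$. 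Adding the pieces yields $\hat\Sigma_{nT}=\Sigma_{nT}+O_{\Pr}[\max\{n^{-1/2}T^{-1}d_{nT}^{-1/2},n^{-1/2}d_{nT}^{-1}\}]$; and since $n^{-1/2}d_{nT}^{-1}=o(d_{nT}^{-1})$ and (because $d_{nT}\le nT$ and $T\to\infty$) $n^{-1/2}T^{-1}d_{nT}^{-1/2}=o(d_{nT}^{-1})$, the final claim $\|\hat\Sigma_{nT}-\Sigma_{nT}\|_{\op}=o_{\Pr}(d_{nT}^{-1})$ follows.

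I expect the main obstacle to be the two moment bounds $\E[\|u_{1}\|^{2}]=O(n/d_{nT})$ and $\E[\|u_{1}\|^{4}]=O((n/d_{nT})^{2})$: in the conditionally centered case these are genuine partial-sum rate results for the conditionally mixing, conditionally mean-zero array $\{\tilde\bx_{it}\epsilon_{it}\}_{t}$, and their proof leans on both (A2) --- which, via Cauchy--Schwarz, supplies bounded $(4+\delta/2)$-th conditional moments of $\tilde\bx_{it}\epsilon_{it}$ --- and the precise summability $\sum_{k}k\,\alpha(k)^{\delta/(8+\delta)}<\infty$ in (A1), the exponent being exactly what a fourth-moment bound requires, followed by integrating over $c_{i}$. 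These are of the same type as the moment estimates already used in the proof of Proposition \ref{prop1}; granting them, the remaining within-group centering corrections and the i.i.d.-across-$i$ averaging are routine.
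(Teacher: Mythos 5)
Your proof is correct and follows essentially the same route as the paper: the same decomposition of $\hat{\epsilon}_{it}$, the same Davidov/Yokoyama conditional-moment bounds for the demeaning and $\tilde{\bbeta}$-estimation errors, and the same final rates. The only (harmless) deviation is in the last step, where you treat $n^{-2}\sum_{i}u_{i}^{\otimes 2}-\Sigma_{nT}$ by a direct Chebyshev argument on the i.i.d.\ array $u_{i}^{\otimes 2}$ with the fourth-moment bound $\E[\|u_{1}\|^{4}]=O((n/d_{nT})^{2})$, whereas the paper splits $\tilde{\Sigma}_{nT}^{ab}$ into conditionally centered and conditional-mean pieces and bounds each variance separately; both rest on the same Yokoyama-type fourth-moment estimate and yield the identical $O_{\Pr}(n^{-1/2}d_{nT}^{-1})$ contribution.
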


\begin{remark}[Initial estimators]
In Proposition \ref{prop2},  $\tilde{\bbeta}$ can be the FE estimator. Actually, using a bias corrected estimator instead does not change the rate of convergence of $\hat{\Sigma}_{nT}$. However, in the case where $T$ is relatively small, the FE estimator can be severely biased,  which may affect the finite sample performance of $\hat{\Sigma}_{nT}$. Thus, it is generally recommended to use a bias corrected estimator of $\bbeta_{0}$ in the construction of $\hat{\Sigma}_{nT}$. 
\end{remark}

\begin{remark}[Relation with \cite{H07b}]
The results of \cite{H07b} are not directly applicable to the asymptotic properties of $\hat{\Sigma}_{nT}$ described in Proposition \ref{prop2}. This is because the individual dummies that control for fixed effects are not included in the model of \cite{H07b} (see also previous Remark \ref{rem: Hansen}). Thus, in contrast with \cite{H07b}, $\sqrt{n} d_{nT}(\hat{\Sigma}_{nT} - \Sigma_{nT})$ is not asymptotically normal with mean zero unless $\sqrt{d_{nT}}/T \to 0$. The main reason is that $\hat{\Sigma}_{nT}$ has a bias of order $n^{-1/2} T^{-1} d_{nT}^{-1/2}$ due to using $\bar{\bx}_{i}$ instead of $\E[ \bx_{i1} \mid c_{i}]$. This bias appears even if we could use $\epsilon_{it}$ in place of $\hat{\epsilon}_{it}$. However, for  inference purposes, the rate of convergence given in Proposition \ref{prop2} is sufficient, and we do not consider the bias correction to $\hat{\Sigma}_{nT}$. 
\end{remark}

Assume now that $\Sigma$ is nonsingular in either case of $\E[ \tilde{\bx}_{it} \epsilon_{it} \mid c_{i}] = \bm{0}$ a.s. or not. Consider testing the null hypothesis $H_{0}: R\bbeta_{0} = \bm{r}$, where $R$ is a $q \times p$ matrix with rank $q$ ($q \leq p$), and $\bm{r} \in \mathbb{R}^{q}$ is a constant vector. Suppose that we have a bias corrected estimator $\tilde{\bbeta}$ having the expansion (\ref{expansion3}). Then under the null hypothesis, the $t$-type (for $q=1$) and  Wald-type statistics
\begin{equation}
\hat{t}=\frac{R\tilde{\bbeta} - r}{\sqrt{R\hat{A}^{-1}\hat{\Sigma}_{nT}\hat{A}^{-1}R'}} \ (q=1) \  \text{and} \ 
\hat{F}=(R\tilde{\bbeta} - \bm{r})'[R\hat{A}^{-1}\hat{\Sigma}_{nT}\hat{A}^{-1}R']^{-1}(R\tilde{\bbeta} - \bm{r}),
\label{wald}
\end{equation} 
converge in distribution to $N(0,1)$ and $\chi_{q}^{2}$, respectively, provided that $\sqrt{d_{nT}}/T^{2} \to 0$. For example, for $q=1$, provided that $\sqrt{d_{nT}}/T^{2} \to 0$,
\begin{align*}
\hat{t}=\frac{\sqrt{d_{nT}}(R\tilde{\bbeta} - r)}{\sqrt{R\hat{A}^{-1}(d_{nT}\hat{\Sigma}_{nT})\hat{A}^{-1}R'}} =\frac{\sqrt{d_{nT}}(R\tilde{\bbeta} - r)}{\sqrt{RA^{-1}\Sigma A^{-1}R'}} (1+o_{\Pr}(1))
\stackrel{d}{\to} N(0,1),
\end{align*}
where the second equality is due to the fact that $\hat{A} = A + o_{\Pr}(1)$ and $d_{nT} \hat{\Sigma}_{nT} = d_{nT} \Sigma_{nT} + o_{\Pr}(1) = \Sigma + o_{\Pr}(1)$. 
The same machinery applies to the Wald-type statistic.
Importantly, in the construction of $t$-type or Wald-type statistics, we do not need any knowledge on the degree of model misspecification.

The resulting estimator $\hat{A}^{-1} \hat{\Sigma}_{nT} \hat{A}^{-1}$ of the covariance matrix $A^{-1} \Sigma_{nT} A^{-1}$ is often called the clustered covariance matrix (CCM) estimator in the literature. The CCM estimator is popular in empirical studies. The appealing point of the CCM estimator, as discussed in \cite{H07b}, is the fact that it is free from any user-chosen parameter such as a bandwidth. The previous discussion shows that inference using a suitable bias corrected estimator and the CCM estimator is ``robust'' to model misspecification. 

\subsection{Cross section bootstrap}

Bootstrap is generally used as a way to estimate the distribution of a statistic \citep[see][for a general reference on bootstrap]{H01}. For panel data, how to implement bootstrap is not necessarily apparent.
See \cite{K08} for some possibilities in bootstrap resamplings for panel data. 
We here study, among them, the cross section bootstrap.

Let $\bm{z}_{i} = (\bm{z}_{i1}',\dots,\bm{z}_{iT}')'$ with $\bm{z}_{it} = (y_{it},\bx_{it}')'$. The cross section bootstrap randomly draws $\bm{z}_{1}^{*},\dots,\bm{z}_{n}^{*}$ from $\{ \bm{z}_{1},\dots,\bm{z}_{n} \}$ with replacement.
The bootstrap FE estimator $\hat{\bbeta}^{*}$ is defined by $\hat{\bbeta}$ in (\ref{fe}) with $\bm{z}_{1},\dots,\bm{z}_{n}$  replaced by $\bm{z}_{1}^{*},\dots,\bm{z}_{n}^{*}$. By this definition, we can express $\hat{\bbeta}^{*}$ as the following form:
\begin{align}
\hat{\bbeta}^{*} &=  
\left \{ \frac{1}{nT} \sum_{i=1}^{n} w_{ni} \sum_{t=1}^{T} (\bx_{it} - \bar{\bx}_{i}) (\bx_{it} - \bar{\bx}_{i})' \right \}^{-1}  \left \{ \frac{1}{nT} \sum_{i=1}^{n} w_{ni} \sum_{t=1}^{T} (\bx_{it} - \bar{\bx}_{i}) (y_{it} - \bar{y}_{i}) \right \} \notag \\
&=: (\hat{A}^{*})^{-1} \hat{S}^{*}, \label{bfe}
\end{align}
where $w_{ni}$ is the number of times that $\bm{z}_{i}$ is ``redrawn'' from $\{ \bm{z}_{1},\dots,\bm{z}_{n} \}$. 
The vector $(w_{n1},\dots,w_{nn})'$ is independent of $\{ c_{i}, \bm{z}_{it} : i \geq 1, t \geq 1 \}$ and multinomially distributed with parameters $n$ and (probabilities) $n^{-1},\dots,n^{-1}$. 

We need to prepare some notation and terminology. Let $\Pr_{W}$ denote the probability measure with respect to $w_{n1},\dots,w_{nn}$. 
Let $\E_{\Pr_{W}}[ \cdot ]$ denote the expectation under $\Pr_{W}$. 
Given a vector valued statistic $\Delta_{n}$ depending on both $c_{1},\dots,c_{n},\bm{z}_{1},\dots,\bm{z}_{n}$ and $w_{n1},\dots,w_{nn}$,  
and a deterministic sequence $a_{n} > 0$, we write ``$\Delta_{n} = o_{\Pr_{W}}(a_{n})$ in probability'' if for every $\epsilon > 0$ and $\delta > 0$,
\begin{equation*}
\Pr ( \Pr_{W}(a_{n}^{-1}\| \Delta_{n} \| > \epsilon) > \delta ) \to 0 
\end{equation*}
as $n \to \infty$, and ``$\Delta_{n} = O_{\Pr_{W}}(a_{n})$ in probability'' if for every $\delta > 0$ and $\eta > 0$, there exists a constant $C>0$ such that
\begin{equation*}
\Pr ( \Pr_{W}(a_{n}^{-1}\| \Delta_{n} \| \geq C) > \delta ) \leq \eta,  
\end{equation*}
for all $n \geq 1$ (recall that $T=T_{n}$). 

We are now in position to state the main result of this section.
\begin{proposition}
\label{prop3}
Suppose that conditions (A1)-(A3) are satisfied. Letting $T=T_{n} \to \infty$ as $n \to \infty$, we have:  
\begin{equation}
\hat{\bbeta}^{*} - \hat{\bbeta} 
= A^{-1} \left \{ \frac{1}{nT} \sum_{i=1}^{n} (w_{ni}-1) \sum_{t=1}^{T} \tilde{\bx}_{it} \epsilon_{it} \right \} + O_{\Pr_{W}}[n^{-1/2}\max\{d_{nT}^{-1/2},T^{-1}\}], \label{BahadurB}
\end{equation}
in probability. Therefore, provided that $\Sigma$ is nonsingular,  we have: 
\begin{equation}
\sup_{\bx \in \mathbb{R}^{p}}|\Pr_{W}\{ \sqrt{d_{nT}} ( \hat{\bbeta}^{*} - \hat{\bbeta} ) \leq \bx \}  - \Pr\{ N(0, A^{-1} \Sigma A^{-1}) \leq \bx \} | \stackrel{\Pr}{\to} 0,
\label{bootd}
\end{equation}
where the inequalities are interpreted coordinatewise. 
\end{proposition}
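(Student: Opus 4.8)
The plan is to mirror the structure of Proposition \ref{prop1}, transferring the stochastic expansion from the sampling probability $\Pr$ to the bootstrap probability $\Pr_{W}$, treating the multinomial weights $w_{ni}$ as the only source of randomness. First I would write $\hat{\bbeta}^{*} - \hat{\bbeta} = (\hat{A}^{*})^{-1}\hat{S}^{*} - \hat{A}^{-1}\hat{S}$ and linearize: since $\hat{A}^{*} = \hat{A} + (\hat{A}^{*} - \hat{A})$ and $\hat{A}^{*} - \hat{A} = (nT)^{-1}\sum_i (w_{ni}-1)\sum_t (\bx_{it}-\bar{\bx}_i)^{\otimes 2}$, I would show this increment is $O_{\Pr_W}(n^{-1/2})$ in probability, using that $\E_{\Pr_W}[(w_{ni}-1)(w_{nj}-1)]$ equals $1-n^{-1}$ if $i=j$ and $-n^{-1}$ otherwise, together with the moment bounds from (A1)--(A2) and the already-established $\hat{A} \stackrel{\Pr}{\to} A$ (Lemma \ref{lem2}). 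A Neumann-type expansion of $(\hat{A}^{*})^{-1}$ around $\hat{A}^{-1}$, combined with $\hat{A}^{-1} \stackrel{\Pr}{\to} A^{-1}$, then reduces the problem to analyzing $\hat{A}^{-1}(\hat{S}^{*} - \hat{S}) - \hat{A}^{-1}(\hat{A}^{*}-\hat{A})\hat{A}^{-1}\hat{S}$ plus negligible terms.

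The core computation is to show $\hat{S}^{*} - \hat{S} - (\hat{A}^{*}-\hat{A})\hat{\bbeta} = (nT)^{-1}\sum_i (w_{ni}-1)\sum_t (\bx_{it}-\bar{\bx}_i)\hat{\epsilon}_{it}$ where $\hat{\epsilon}_{it} = (y_{it}-\bar{y}_i) - (\bx_{it}-\bar{\bx}_i)'\hat{\bbeta}$, and that replacing $\hat{\epsilon}_{it}$ by $\epsilon_{it}$ and $(\bx_{it}-\bar{\bx}_i)$ by $\tilde{\bx}_{it}$ costs only $O_{\Pr_W}[n^{-1/2}\max\{d_{nT}^{-1/2},T^{-1}\}]$ in probability. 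The substitution of $\bar{\bx}_i$ for $\E[\bx_{i1}\mid c_i]$ generates exactly the terms controlled via Lemma \ref{lem1} and the $B_T, D_T$ machinery of Proposition \ref{prop1}; the point is that all such terms appear inside the $w_{ni}$-weighted sum and, conditionally on the data, their $\Pr_W$-variance is bounded by $n^{-1}$ times the corresponding unconditional second moments, which are $O(d_{nT}^{-1})$ by the same decomposition $\Sigma_{nT} = O\{(nT)^{-1}\}$ or $\asymp n^{-1}$ used in the text. Since $\E_{\Pr_W}[w_{ni}-1]=0$, the leading $\Pr_W$-mean vanishes, so no incidental-parameters bias survives in the bootstrap expansion — this is precisely the mechanism behind the ``no growth restriction on $T$'' claim, and I would highlight it explicitly.

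For the distributional conclusion \eqref{bootd}, I would apply a conditional (Lindeberg--Feller or Lyapunov) CLT to $\sqrt{d_{nT}}\,A^{-1}(nT)^{-1}\sum_i (w_{ni}-1)\xi_i$ where $\xi_i = \sum_t \tilde{\bx}_{it}\epsilon_{it}$: conditional on the data the summands are independent (after the standard Poissonization or negligible-correction argument handling the multinomial dependence among the $w_{ni}$), with conditional covariance $\frac{d_{nT}}{n^2T^2}\sum_i \xi_i\xi_i'$ (up to the $-n^{-1}(\sum_i\xi_i)(\sum_j\xi_j)'$ cross term, which is lower order). A law of large numbers gives $\frac{d_{nT}}{n^2T^2}\sum_i \xi_i\xi_i' \stackrel{\Pr}{\to} \Sigma$ — for $d_{nT}=nT$ this is $(nT^2)^{-1}\sum_i \xi_i^{\otimes 2}$ converging to the long-run-variance limit, and for $d_{nT}=n$ it is $(nT^2)^{-1}\sum_i \xi_i^{\otimes 2}$ with $\xi_i/T \to \E[\tilde{\bx}_{i1}\epsilon_{i1}\mid c_i]$, consistent with the $\Sigma$ defined after Proposition \ref{prop1}. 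Checking the conditional Lindeberg condition uses the $(8+\delta)$-moments in (A2), and Pólya's theorem upgrades the conditional CLT to uniform convergence of distribution functions, giving \eqref{bootd}.

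The main obstacle I anticipate is the bookkeeping for the $\bar{\bx}_i$-for-$\E[\bx_{i1}\mid c_i]$ replacement \emph{inside} the random-weight sum: one must verify that the cross terms between $(w_{ni}-1)$ and the within-individual averages $\bar{\tilde{\bx}}_i$, $\bar{\tilde{y}}_i$ — which carry the incidental-parameters structure — are $O_{\Pr_W}$-negligible at the claimed rate uniformly, and that the higher-order $(D_TA^{-1})^m$ terms from the Neumann expansion of $(\hat{A}^{*})^{-1}$ genuinely cancel or are absorbed, rather than producing a bootstrap analogue of the $T^{-1}$ bias. This requires re-running the bound-covariances-and-moments estimates of Proposition \ref{prop1}'s proof one layer down, now conditionally on the data, and is where the bulk of the technical work lies.
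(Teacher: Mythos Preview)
Your approach is correct but differs from the paper's. The paper does not linearize $(\hat{A}^{*})^{-1}$ around $\hat{A}^{-1}$; instead it expands $\hat{\bbeta}^{*}-\bbeta_{0}$ and $\hat{\bbeta}-\bbeta_{0}$ \emph{separately}, each around $A^{-1}$ with the full Neumann series $A^{-1}\sum_{m\geq 0}T^{-m}(D_TA^{-1})^m$ (a bootstrap analogue of Lemma \ref{lem2} is proved: $(\hat{A}^{*})^{-1}$ has the \emph{same} deterministic expansion as $\hat{A}^{-1}$, and $\E[\hat{S}_{2}^{*}]=\E[\hat{S}_{2}]=T^{-1}B_T$). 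The incidental-parameters bias term $-A^{-1}\{\sum_m T^{-m-1}(D_TA^{-1})^m\}B_T$ is then \emph{identical} in both expansions and cancels on subtraction, leaving $A^{-1}(\hat{S}_{1}^{*}-\hat{S}_{1})$. The equivalence between unconditional $O_{\Pr}$ and ``$O_{\Pr_W}$ in probability'' is handled by a one-line lemma \citep{CH10}.

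Your direct route via the exact identity $\hat{\bbeta}^{*}-\hat{\bbeta}=(\hat{A}^{*})^{-1}\bigl[(nT)^{-1}\sum_i(w_{ni}-1)\sum_t(\bx_{it}-\bar{\bx}_i)\hat{\epsilon}_{it}\bigr]$ also works, and is in some ways more elementary. But you overcomplicate the anticipated obstacles: the $B_T,D_T$ machinery and the higher-order Neumann terms are \emph{not needed} in your approach. Since the bracketed sum already has $\Pr_W$-mean zero (by $\E_{\Pr_W}[w_{ni}-1]=0$), no bias ever appears; and since that sum is $O_{\Pr_W}(d_{nT}^{-1/2})$ in probability, the crude bound $(\hat{A}^{*})^{-1}=A^{-1}+O_{\Pr}(\max\{n^{-1/2},T^{-1}\})$ suffices --- the full series expansion is superfluous. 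The replacements $\hat{\epsilon}_{it}\to\epsilon_{it}$ and $\bar{\bx}_i\to\E[\bx_{i1}\mid c_i]$ generate only the terms $n^{-1}\sum_i(w_{ni}-1)\bar{\tilde{\bx}}_i\bar{\epsilon}_i$ and $(\hat{A}^{*}-\hat{A})(\hat{\bbeta}-\bbeta_0)$, each of the right order by the second-moment bounds already in Lemma \ref{lem2}. Your CLT argument is fine; the paper formalizes it via a subsequence argument (pass to a subsequence where the sample moments converge a.s., apply Lyapunov conditionally on data, then convert back to convergence in probability), avoiding any Poissonization.
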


Interestingly, Proposition \ref{prop3} shows that despite the fact that the original FE estimator has the incidental parameters bias of which the top order is $T^{-1}$, as shown in Proposition \ref{prop1}, the bootstrap distribution made by applying the cross section bootstrap to the FE estimator does not have the incidental parameters bias. As a consequence, the bootstrap distribution approaching the centered normal distribution holds without any specific growth condition on $T$. In fact, this is not surprising. The main source of the incidental parameters bias comes from the term $n^{-1} \sum_{i=1}^{n} \bar{\tilde{\bx}}_{i} \bar{\epsilon}_{i}$, which is linear in the cross section dimension. The bootstrap analogue of this term is thus $n^{-1} \sum_{i=1}^{n} w_{ni} \bar{\tilde{\bx}}_{i} \bar{\epsilon}_{i}$, so that the difference of these terms has mean zero with respect to $\Pr_{W}$. The same machinery applies to the term $n^{-1} \sum_{i=1}^{n} \bar{\tilde{\bx}}_{i} \bar{\tilde{\bx}}_{i}'$, so that the incidental parameters bias is completely removed in the bootstrap distribution. 

The previous discussion also has the following implication: the cross section bootstrap can not be used as a way to correct the incidental parameter bias. Recall that in the cross section case, the bootstrap can be used to correct the second order bias coming from the quadratic term; here the incidental parameters bias comes from the terms linear in the cross section dimension, so that the cross section bootstrap does not work as a way to correct the bias.

Proposition \ref{prop3} shows that, for $1 \leq a \leq p$ fixed and $\alpha \in (0,1)$,
\begin{align*}
\hat{q}(\alpha) &:= \text{conditional $\alpha$-quantile of $(\hat{\bbeta}^{*}-\hat{\bbeta})^{a}$} \\
&:=\inf \{ b \in \mathbb{R} : \Pr_{W}((\hat{\bbeta}^{*}-\hat{\bbeta})^{a} \leq b)  \geq \alpha \},
\end{align*}
we have 
\begin{equation*}
\hat{q}(\alpha) = \frac{\sqrt{(A^{-1} \Sigma A)^{aa}}\Phi^{-1}(\alpha) }{\sqrt{d_{nT}}} + o_{\Pr}(d_{nT}^{-1/2}),
\end{equation*}
where $\Phi (\cdot )$ is the distribution function of the standard normal distribution (recall that $z^{a}$ is the $a$-th element of a vector $\bm{z}$ and $A^{ab}$ denotes the $(a,b)$-element of a matrix $A$). 
Suppose that we have a bias corrected estimator $\tilde{\bbeta}$ having the expansion (\ref{expansion3}). Then by a standard argument, we can deduce that 
\begin{equation*}
\Pr (  \beta_{0}^{a} \leq \tilde{\beta}^{a} + \hat{q}(\alpha)) = \alpha + o(1),
\end{equation*}
provided that $\sqrt{d_{nT}}/T^{2} \to 0$.
Note that $\hat{q} (\alpha)$ can be computed with any precision by using simulation. Moreover, the computation of $\hat{q} (\alpha)$ does not require any knowledge on the speed of $d_{nT}$, and in this sense the cross section bootstrap is robust to model misspecification.

An analogous result holds for the HPJ estimator (see Section 4.1). 

\begin{corollary}
\label{cor4}
Suppose that conditions (A1)-(A3) are satisfied. Let $\hat{\bbeta}_{1/2}^{*}$ denote the HPJ estimator based on the bootstrap sample $\{ \bm{z}_{1}^{*},\dots,\bm{z}_{n}^{*} \}$.  Letting $T=T_{n} \to \infty$ as $n \to \infty$, we have:
\begin{equation*}
\hat{\bbeta}_{1/2}^{*} - \hat{\bbeta}_{1/2} 
= A^{-1} \left \{ \frac{1}{nT} \sum_{i=1}^{n} (w_{ni}-1) \sum_{t=1}^{T} \tilde{\bx}_{it} \epsilon_{it} \right \} + O_{\Pr_{W}}[n^{-1/2}\max\{d_{nT}^{-1/2},T^{-1}\}], 
\end{equation*}
in probability. 
Therefore, provided that $\Sigma$ is nonsingular, we have:
\begin{equation*}
\sup_{\bx \in \mathbb{R}^{p}}|\Pr_{W}\{ \sqrt{d_{nT}} ( \hat{\bbeta}_{1/2}^{*} - \hat{\bbeta}_{1/2} ) \leq \bx \}  - \Pr\{ N(0, A^{-1} \Sigma A^{-1}) \leq \bx \} | \stackrel{\Pr}{\to} 0.
\end{equation*}
\end{corollary}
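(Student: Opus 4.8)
The plan is to reduce the statement to Proposition \ref{prop3} by exploiting the linearity of the half-panel jackknife. Write the bootstrap HPJ estimator as $\hat{\bbeta}_{1/2}^{*} = 2\hat{\bbeta}^{*} - (\hat{\bbeta}_{S_{1}}^{*}+\hat{\bbeta}_{S_{2}}^{*})/2$, where $\hat{\bbeta}_{S_{l}}^{*}$ is the FE estimator formed from the resampled individuals $\bm{z}_{1}^{*},\dots,\bm{z}_{n}^{*}$ but using only the time indices $t\in S_{l}$. The first step is to observe that each of $\hat{\bbeta}^{*}$, $\hat{\bbeta}_{S_{1}}^{*}$, $\hat{\bbeta}_{S_{2}}^{*}$ is itself a cross section bootstrap FE estimator for a panel satisfying (A1)--(A3): for the split samples the underlying panel is $\{(y_{it},\bx_{it}')' : i\geq 1,\ t\in S_{l}\}$, and by the stationarity in (A1) the conditional process restricted to $S_{l}$ is again stationary $\alpha$-mixing with the same mixing bound, while the marginal law of $(c_{i},y_{it},\bx_{it})$ is unchanged; hence the matrix $A=\E[\tilde{\bx}_{it}\tilde{\bx}_{it}']$, the pseudo-true parameter $\bbeta_{0}$, and the errors $\epsilon_{it}=\tilde{y}_{it}-\tilde{\bx}_{it}'\bbeta_{0}$ all coincide with those of the full panel. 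Crucially, the multinomial weight vector $(w_{n1},\dots,w_{nn})$ is \emph{the same} in all three estimators, since the bootstrap resamples entire individual time series.

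Applying Proposition \ref{prop3} to these three panels (with $T$ replaced by $T/2$ for the split samples, noting $T/2\to\infty$) gives, in probability,
\[
\hat{\bbeta}^{*}-\hat{\bbeta}=A^{-1}\left\{\frac{1}{nT}\sum_{i=1}^{n}(w_{ni}-1)\sum_{t=1}^{T}\tilde{\bx}_{it}\epsilon_{it}\right\}+O_{\Pr_{W}}[n^{-1/2}\max\{d_{nT}^{-1/2},T^{-1}\}],
\]
\[
\hat{\bbeta}_{S_{l}}^{*}-\hat{\bbeta}_{S_{l}}=A^{-1}\left\{\frac{1}{n(T/2)}\sum_{i=1}^{n}(w_{ni}-1)\sum_{t\in S_{l}}\tilde{\bx}_{it}\epsilon_{it}\right\}+O_{\Pr_{W}}[n^{-1/2}\max\{d_{nT}^{-1/2},T^{-1}\}],\quad l=1,2,
\]
where for the split-sample expansions we used that the relevant rate, namely $n(T/2)$ or $n$ according to whether $\E[\tilde{\bx}_{it}\epsilon_{it}\mid c_{i}]=\bm{0}$ a.s.\ or not, is $\asymp d_{nT}$, so the remainder orders match; a finite sum of quantities that are $O_{\Pr_{W}}$ in probability is again $O_{\Pr_{W}}$ in probability. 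Since $\{1,\dots,T\}$ is the disjoint union $S_{1}\cup S_{2}$,
\[
\frac{1}{2}\sum_{l=1}^{2}\frac{1}{n(T/2)}\sum_{i=1}^{n}(w_{ni}-1)\sum_{t\in S_{l}}\tilde{\bx}_{it}\epsilon_{it}=\frac{1}{nT}\sum_{i=1}^{n}(w_{ni}-1)\sum_{t=1}^{T}\tilde{\bx}_{it}\epsilon_{it},
\]
so forming $2(\hat{\bbeta}^{*}-\hat{\bbeta})-\tfrac12\sum_{l}(\hat{\bbeta}_{S_{l}}^{*}-\hat{\bbeta}_{S_{l}})$ makes the leading terms combine, $2A^{-1}\{\cdots\}-A^{-1}\{\cdots\}$ collapsing to a single $A^{-1}\{(nT)^{-1}\sum_{i}(w_{ni}-1)\sum_{t}\tilde{\bx}_{it}\epsilon_{it}\}$, which is exactly the claimed expansion with remainder $O_{\Pr_{W}}[n^{-1/2}\max\{d_{nT}^{-1/2},T^{-1}\}]$ in probability.

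The distributional conclusion then follows verbatim from the proof of Proposition \ref{prop3}: the leading term of the expansion for $\hat{\bbeta}_{1/2}^{*}-\hat{\bbeta}_{1/2}$ is \emph{identical} to that for $\hat{\bbeta}^{*}-\hat{\bbeta}$, so multiplied by $\sqrt{d_{nT}}$ it converges conditionally on the data to $N(\bm{0},A^{-1}\Sigma A^{-1})$ (the central limit theorem for the bootstrap-weighted sum, conditional on $\{c_{i},\bm{z}_{it}\}$, being handled exactly as there), while the remainder is $o_{\Pr_{W}}(d_{nT}^{-1/2})$ in probability, and the continuity of the limiting normal distribution function upgrades this to the uniform bound over $\bx\in\mathbb{R}^{p}$. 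The argument is essentially bookkeeping; the one point requiring (mild) care — the main obstacle, such as it is — is justifying the application of Proposition \ref{prop3} to the sub-panels indexed by $S_{l}$, i.e., checking that (A1)--(A3) transfer and that $\bbeta_{0}$, $A$, and $\epsilon_{it}$ are genuinely unchanged, together with the observation that the \emph{shared} weight vector $(w_{n1},\dots,w_{nn})$ is precisely what makes the three leading terms combine so cleanly.
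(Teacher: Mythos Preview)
Your argument is correct and is precisely the route the paper has in mind: the authors state that the corollary ``follows directly from the definition of the HPJ estimator and Proposition \ref{prop3}, and hence we omit the proof,'' and your write-up simply fills in that bookkeeping by applying Proposition \ref{prop3} to the full panel and to each half panel and then recombining. The only nuance---that (A1)--(A3), $A$, $\bbeta_{0}$, $\epsilon_{it}$, and the shared multinomial weights carry over to the sub-panels so that the leading terms combine as $2-1=1$---is exactly what the paper is implicitly invoking.
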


This corollary follows directly from the definition of the HPJ estimator and Proposition \ref{prop3}, and hence we omit the proof. 
Basically, the conclusion of Corollary \ref{cor4} holds for other reasonable bias corrected estimators. We do not attempt to encompass generality in this direction. 

Analogous results hold for pivotal statistics. Because of the space limitation, we push the formal results on pivotal statistics to the Appendix (Appendix \ref{Appendix}).

\begin{remark}[Higher order properties]
The higher order properties of the cross section
bootstrap will be very complicated in this setting and we do not attempt to study them
here. However, it is of interest to quantify the order of the convergence in, say, (\ref{bootB}) in Appendix \ref{appendix},
which is left to future research.
\end{remark}

\begin{remark}[Relation to the previous literature]
There are some earlier works on the bootstrap for panel data. \cite{DMB04} called the cross section bootstrap in this paper the ``block bootstrap'' and studied its numerical  properties by using simulations, but did not study its theoretical properties.
\cite{K08} developed the asymptotic properties of the cross section bootstrap when the strict exogeneity is met, hence excluding the possibility that 
the incidental parameters bias appears.  \cite{G11} studies the asymptotic properties of the moving block bootstrap for panel data, which resamples the data in the time series dimension and hence is different from the cross section bootstrap.
Importantly, while \cite{G11} allowed for cross section dependence which we exclude here, she assumed that the number of time periods, $T$, is sufficiently large (typically $n/T \to 0$) so that the incidental parameters bias does not appear. Lastly, \cite{DJ09} proposed to use the cross
section bootstrap for inference for nonlinear panel data models such as panel probit
models, but did not give any theoretical result.
The asymptotic properties of the cross section bootstrap were largely unknown when the incidental parameters bias appears, even without model misspecification, and the results in this section contribute to filling this void and give useful suggestions to  empirical studies.  
\end{remark}

\begin{remark}[Weighted bootstrap]
In (\ref{bfe}), the weights $w_{n1},\dots,w_{nn}$ are multinomially distributed. It is possible to consider other weights. A perhaps simplest variation is to draw independent weights 
$w_{1},\dots,w_{n}$ from a common distribution with mean $1$ and variance $v > 0$, which corresponds to the {\em weighted  bootstrap} \citep[see, for example,][]{MK05}.
Let $\hat{\bbeta}^{W}$ denote (\ref{bfe}) with $w_{ni}$ replaced by these independent weights $w_{i}$. Then the conclusion of Proposition \ref{prop3} holds with $\hat{\bbeta}^{*}$ replaced by $\hat{\bbeta}^{W}$ and $\sqrt{d_{nT}}$ replaced by $\sqrt{d_{nT}/v}$. Since the proof is completely analogous, we omit the details for brevity. 
\end{remark}

\begin{remark}[Covariance matrix estimation]
So far, we have discussed the distributional properties of the cross section bootstrap.
Given Proposition \ref{prop3}, it is natural to estimate the asymptotic covariance matrix $A^{-1}\Sigma_{nT} A^{-1}$ by the conditional covariance matrix of $\hat{\bbeta}^{*} - \hat{\bbeta}$. However, since convergence in distribution does not imply moment convergence, Proposition \ref{prop3} does not guarantee that 
$d_{nT} \E_{\Pr_{W}}[ (\hat{\bbeta}^{*} - \hat{\bbeta})(\hat{\bbeta}^{*} - \hat{\bbeta})'] \to A^{-1}\Sigma A^{-1}$ in probability.
See \cite{S92} for some examples of inconsistency of bootstrap variance estimators. In linear regression models with pure cross section or time series data, \cite{GW05} discussed bootstrap-based covariance matrix estimation. 
In \cite{GW05}, they made a modification to the bootstrap least square estimator, to guarantee the bootstrap estimator to satisfy
a uniform integrability condition. Their modification is a sort of ``trimming'' to the second moment matrix. 
By doing so, they established the consistency of the bootstrap covariance matrix estimator.
In the present panel data case, while their modification is straightforward to adapt to the FE estimator, it is not clear whether it actually works at the proof level. In Proposition \ref{prop3}, what we have done are: (i) to implement higher order stochastic expansions to the FE estimator to exhaust the incidental parameters bias of any order; (ii) to implement (i) to the bootstrap analogue; (iii) to eliminate the incidental parameters bias by taking the difference. 
In step (ii), we need expansions of $\hat{A}^{*}$, or more precisely $(\hat{A}^{*})^{-1}$. Simply bounding $\| (\hat{A}^{*})^{-1} \|_{\op}$ from above, as \cite{GW05} did in Step 3 of the proof of their Theorem 1, will leave the bias in the bootstrap distribution and cause a problem in establishing uniform integrability. We leave this as an open problem. 
\end{remark}

\section{Numerical examples}
\label{sec: MC}

In this section, numerical examples to illustrate the methods discussed in this paper are provided. Both Monte Carlo experiments and a real data analysis are presented.
All the numerical experiments were performed on the statistical software {\tt R} \citep{R08}. 
Computer programs to replicate the numerical analyses are available from the authors.

\subsection{Simulation experiments}

We use several different designs of simulation experiments to assess the finite sample performance of the estimates and inference procedures discussed in the previous sections. 
In the first design, as a benchmark, we analyze the estimates and inference procedures under correct specification. The true data generating process (DGP) in the first design follows a panel AR(1) model and we (correctly) fit panel AR(1). 
The next four models are designed to study the estimates and inference procedures under misspecification.\footnote{An additional simulation design where the true DGP follows a panel EXPAR model is analyzed in Appendix \ref{additional simulation}.}
In the second design, the true DGP follows a panel AR(2) model (see Example 1 in Section \ref{sec: interpretation}); in the third design we extend the true AR(2) DGP and include two lags of exogenous regressors;  and in the last design, the true DGP follows a random coefficient AR(1) model (see Example 2 in Section \ref{sec: interpretation}). 
In each of these cases, we incorrectly fit a panel AR(1) model and estimate the slope parameter. Note that the first three cases correspond to ``$\E[ \tilde{\bx}_{it} \epsilon_{it} | c_{i} ] = \bm{0}$''  case and the last case corresponds to ``$\E[ \tilde{\bx}_{it} \epsilon_{it} | c_{i} ] \neq \bm{0}$'' case. The following sample sizes are considered: $n \in \{50, 100, 200\}$ and $T \in \{12, 16, 20, 24\}$. The number of Monte Carlo repetitions is 2,000. 

We consider four different estimates: the FE, GMM \citep{AB91,AB95},  HPJ estimates, and the bias corrected estimate proposed by \citet[][equation (6)]{HK02} (HK).\footnote{The HK estimate is not designed to reduce the bias when misspecification is present. We report these results only for comparison reasons.} More precisely, the GMM estimate we compare is the one-step GMM estimate formally defined in equation (8) in \cite{AA03}. 
We also investigate the small sample properties of the inference procedures, paying particular attention to the empirical coverage probability. The nominal coverage is 95\%.
Note that we are trying to construct a 95\% confidence interval for the pseudo-true parameter. 
We consider and compare the following options for inference on the pseudo-true parameter:
\begin{longtable}{c|c|c}
\hline
Option & Centering & Inference procedure \\
\hline 
FE-CCM & FE estimate & CCM with FE estimate \\
\hline
HK & HK estimate &  * \\
\hline 
GMM & GMM estimate & standard GMM variance estimate \\
\hline 
HPJ-CCM & HPJ estimate & CCM with HPJ estimate \\
\hline 
HPJ-FEB & HPJ estimate & CSB applied to FE estimate \\
\hline 
HPJ-HPJB & HPJ estimate & CSB applied to HPJ estimate \\
\hline 
HPJ-HPJPB & HPJ estimate & pivotal-CSB applied to HPJ \\
\hline 
\end{longtable}
In the HK option, we use a simple consistent estimate of the asymptotic variance based on the formula in \citet[][p.1645]{HK02}. 
Here CSB refers to ``cross section bootstrap''. The number of bootstrap repetitions in  each case is 1,000. Note that the bias and variance formula in \cite{HK02} are not valid under the misspecified settings below (nevertheless the HK estimate is consistent for the pseudo-true parameter when $n$ and $T$ jointly go to infinity as the difference between the FE and HK estimates are $O(T^{-1})$), hence it is natural to expect the HK option does not perform well in those cases (as it is not designed for covering model misspecification). Also it is expected that the FE estimate suffers from the incidental parameters bias and hence the FE-CCM option will not work well. The GMM estimate is formally not known to be consistent for the pseudo-true parameter here, but the result of \cite{O08} suggests that it is the case (and hence comparison with the GMM estimate makes some sense).\footnote{However, \cite{O08} used a different set of assumptions and the sequential  asymptotic scheme where $n \to \infty$ first and then $T \to \infty$, so his result is not directly transferred to our case. It is of interest to study the asymptotic properties of the GMM estimate under misspecification when $n$ and $T$ jointly go to infinity, which is left to future research. } 
However, it is expected that the GMM option will not perform well as it is not designed for covering model misspecification. 
The last four options are expected to work reasonably well at least when $T$ is moderately large. The precise description of the HPJ-HPJPB option is the following: in the HPJ-HPJPB, we use the HPJ estimate as the center, and apply the cross section bootstrap to the $t$-statistic. The $t$-statistic here is constructed by using the HPJ estimate together with the CCM estimate, and the initial estimate in construction of the CCM estimate is the HPJ estimate.

\subsubsection*{Panel AR models}
In the first example, the true data generating process (DGP) is a panel AR(1) model:
\begin{equation*}
y_{it} = c_{i} + \phi y_{i,t-1} + u_{it},
\end{equation*}
where $u_{it} \sim \ \text{i.i.d.} \ t (10)$ ($t$ distribution with $10$ degrees of freedom), $c_{i} \sim \ \text{i.i.d.} \ U(-0.5,0.5)$, and $\phi=0.8$. 
In generating $y_{it}$ we set $y_{i,-500} = 0$ and discard the first 500 observations, using the observations $t = 0$ through $T$ for estimation. 
In this case,  we correctly fit panel AR(1) and there is no misspecification in the model. The results are collected in Table \ref{table.mc.0}.

{\bf Table \ref{table.mc.0}}: The FE estimate has large bias. The HK and GMM estimates are biased when $T$ and $n$ are small, but the bias decreases as $T$ and $n$ are large, respectively. The HPJ estimate is approximately unbiased. Regarding inference, the empirical coverage of FE-CCM is close to zero, likely due to the large bias in the FE estimate. HK is  under coverage. GMM, as expected, has a good coverage property in this case, especially for large $n$. It is important to notice that the robust inference procedures, especially HPJ-HPJB and HPJ-HPJPB, also have good coverage under no model misspecification.

\vspace{0.25cm}

In the next example, the true DGP follows a panel AR(2) model: 
\begin{equation*}
y_{it} = c_{i} + \phi_{1} y_{i,t-1}+ \phi_{2} y_{i,t-2} + u_{it},
\end{equation*}
where $u_{it} \sim \ \text{i.i.d.} \ t (10)$ and $c_{i} \sim \ \text{i.i.d.} \ U(-0.5,0.5)$.
Two cases for the parameters $\phi_{1}$ and $\phi_{2}$ are considered: $\phi_{1}=\phi_{2}=0.4$ and $\phi_{1}=\phi_{2}=-0.4$. 

Despite that the true DGP is a panel AR(2) model, suppose that we incorrectly fit a panel AR(1) model and estimate the slope parameter. 
When $\phi_{1}=\phi_{2}=0.4$, the pseudo-true parameter is $\beta_{0}=0.67$, and when $\phi_{1}=\phi_{2}=-0.4$, $\beta_{0}=-0.28$. The results for these two cases are presented in Tables \ref{table.mc.1} and \ref{table.mc.2},  respectively. 

{\bf Table \ref{table.mc.1}}: The FE and GMM estimates are severely biased. The HK estimate is also biased as it is not designed for handling the case where misspecification is present. The HPJ estimate is able to reduce the bias substantially. The bias is  small even for modest $T$, such as $T=20$ and $T=24$. Regarding the standard deviations, the HPJ estimate has slight variance inflation relative to the FE estimate in the finite sample (which is also observed in \cite{DJ09} in a different context of estimation of nonlinear panel data models such as panel probit models). 

As for the empirical coverage, the FE-CCM, HK, and GMM options perform poorly due to the fact  that the  FE, HK, and GMM estimates are largely biased. 
The other options, namely, HPJ-CCM, HPJ-FEB, HPJ-HPJB and HPJ-HPJPB, perform reasonably well, but the HPJ-HPJPB option, as expected, seems to be the best. 
The coverage of  HPJ-HPJPB  is about 92\% for $n=100$ and $T=24$, and close to the nominal $95\%$.

{\bf Table \ref{table.mc.2}}:  In this case, the incidental parameters bias is small and all the options perform relatively well. 
However, regarding coverage, as $n$ grows,  the performance of the FE-CCM, HK, and GMM options deteriorates since the ratio between the bias and the standard deviation becomes larger in each case. On the other hand, the other options, HPJ-CCM, HPJ-FEB, HPJ-HPJB and HPJ-HPJPB, perform well regardless of the combination of $(n,T)$. 

{\bf Table \ref{table.mc.5}}: To extend the AR(2) example, we include exogenous regressors in the true DGP. In this case, the true DGP is as following: 
\begin{equation*}
y_{it} = c_{i} + \phi_{1} y_{i,t-1}+ \phi_{2} y_{i,t-2} + \rho_{1} x_{i,t}+ \rho_{2} x_{i,t-1} + u_{it},
\end{equation*}
where $u_{it} \sim \ \text{i.i.d.} \ t (10)$, $c_{i} \sim \ \text{i.i.d.} \ U(-0.5,0.5)$, and $x_{it} \sim \ \text{i.i.d.} \ N(0,1)$. 
Finally, the parameters $\phi_{1} = \phi_{2} =0.4$ and $\rho_{1}=\rho_{2}=0.5$. In this case, we incorrectly fit a panel model with regressors $(y_{i,t-1},x_{i,t-1})$ and report estimates of the slope parameter of the autoregressive term. This pseudo-true parameter value on the autoregressive term  is $0.73$. The results for this case are presented in Tables \ref{table.mc.5}. 
The results in Table \ref{table.mc.5} show evidence that the proposed methods are effective in finite sample. The bias of the HPJ estimator is small, on the other hand the bias of other estimators are large. Regarding inference, given the bias in the FE, HK and GMM, their respective coverage rates are poor. But, the empirical coverage of HPJ-HPJB and HPJ-HPJPB are close to the nominal.

\subsubsection*{Random coefficients AR model}

In the fourth example, the true DGP is 
\begin{equation*}
y_{it} = c_{i} y_{i,t-1}  + u_{it},
\end{equation*}
where  $u_{it} \sim \ \text{i.i.d.} \ N(0,1)$, $c_{i} \sim \ \text{i.i.d.} \ U(0,0.9)$. 
This model appears in Example 3 in
Section \ref{sec: interpretation}. As before, we incorrectly fit a panel AR(1) model and estimate the slope
parameter. The value of the pseudo-true parameter is $\beta_{0}=0.56$. The simulation results for this case are presented in Table \ref{table.mc.4}. 

{\bf Table \ref{table.mc.4}}: As in the previous cases, the FE estimate is largely biased and FE-CCM
performs poorly due to the bias. Note that the decreasing speed of the standard
deviation for the FE estimate as $T$ grows is relatively slow, which would reflect the
fact that the convergence rate of the FE estimate (without the bias part) under this
DGP is $n^{-1/2}$ and not $(nT)^{-1/2}$ (the asymptotic variance of the FE estimate consists
of the part decreasing like $O(n^{-1})$ and also the part decreasing like $O\{ (nT)^{-1} \}$, so even
in this case, it is not surprising that the standard deviation of the FE estimate in the
finite sample slowly decreases as $T$ increases). The HPJ estimate is able to largely
remove the bias, nevertheless there is slight variance inflation in the finite sample. In this example, the
HK and GMM estimates are able to reduce the bias, to some extent, for large time series. However, the empirical coverage of the HK and GMM options is still poor. Lastly, among the inference procedures, HPJ-HPJPB works particularly well.

\subsection{Real data analysis}

In this section, we apply the procedures discussed in the previous sections  to a model of unemployment dynamics at the U.S. state level. \cite{BC05} and \cite{Baglan10} studied this subject using a dynamic panel data model. In particular, \cite{BC05} modeled the current unemployment rate $(U_{it})$ as a function of both lagged unemployment rate and economic growth rate $(G_{i,t-1})$. 
In addition, to capture state specific effects, the model includes state individual intercepts $\eta_{i}$. The model can be written as follows: 
\begin{equation}
\label{eq.app1}
U_{it} = \gamma U_{i,t-1} + \beta G_{i,t-1} + \eta_{i} +  \varepsilon_{it},
\end{equation}
or equivalently
\begin{equation}
\label{eq.app2}
U_{it} - U_{i,t-1}= (\gamma-1) (U_{i,t-1} -\alpha_{i}) + \beta (G_{i,t-1} - \delta) + \varepsilon_{it},
\end{equation}
where $(1-\gamma) \alpha_{i}-\beta \delta = \eta_{i}$ and $\varepsilon_{it}$ is an innovation term. The model described in equation (\ref{eq.app2}) shows that changes in unemployment rate are determined by two observable components. The first is an adjustment of the unemployment rate towards a ``natural'' or ``equilibrium'' rate of unemployment, $\alpha_{i}$. This rate of unemployment equilibrium is allowed to vary across states. Moreover, the speed of adjustment of the unemployment rate towards the state specific equilibrium is equal to $1-\gamma$. Similarly, the second factor determining changes in unemployment rate is a deviation of the economic growth rate around a constant equilibrium.

The data for the unemployment rate are taken from the U.S. Bureau of Labor Statistics for the 1976--2010 period. Data for the state product are per capita personal income (thousands of dollars) from the U.S. Bureau of Economic Analysis deflated by annual implicit price deflator. The economic growth rate is taken to be the relative growth of the state product. Data are available for all 50 U.S. states and Washington D.C. We have  a panel data set of 51 subjects over 35 years ($n=51$ and $T=35$).\footnote{We performed unit root tests in both series and the null of unit root are reject at standard significance levels for all samples considered.}

We consider and compare several different inference procedures: the FE estimate with its associated CCM estimate (FE-CCM); and the HPJ estimate with inference using its associated CCM estimate and pivotal cross-section bootstrap, which we denote by HPJ-CCM and HPJ-HPJPB,  respectively.\footnote{The number of bootstrap repetitions for HPJ-HPJPB is 1,000.} For comparison, we also report the results for the one-step GMM, and the two-stage least squares (TSLS) \citep{AH82} estimates.\footnote{For  TSLS we use $U_{i,t-2}$ and $G_{i,t-2}$ as instruments.}  We present 90\% and 95\% confidence intervals in all cases. Note here that the HPJ-CCM and HPJ-HPJPB options are misspecification robust, so they provide meaningful inference even when misspecification is present.


The results for point estimates and confidence intervals are collected in Table \ref{t.app} Panel A for 1976-2010. The HPJ estimate of $\gamma$ (columns HPJ-CCM and HPJ-HPJPB) is 0.830, which implies that the speed of adjustment is approximately 17\% per year. 
The FE estimate is 0.790, implying a speed of convergence around 21\%, and the GMM estimate is 0.80 with speed of approximately 20\%. Finally, the TSLS estimate  for $\gamma$ is smaller than other estimates, and the speed of adjustment is larger, close to 67\%.
Regarding confidence intervals for $\gamma$, FE-CCM, HPJ-CCM, HPJ-HPJPB and GMM have confidence intervals with similar length, while  the confidence intervals of TSLS are substantially larger than the other options.
The results for HPJ-CCM and HPJ-HPJPB are very similar. 
 
Now we move our attention to the economic growth rate variable. The HPJ estimate of $\beta$ is $-0.079$, which in absolute value is slightly smaller than the FE-CCM and GMM estimates but larger than the TSLS estimate. 
The confidence intervals of HPJ-CCM and HPJ-HPJPB are similar.
The FE estimate of $\beta$ is $-0.088$. This estimate is accompanied with relatively narrow confidence intervals and zero is not included in the intervals. However, the TSLS estimate of $\beta$ $-0.003$, and zero is inside both the 90\% and 95\% confidence intervals in the TSLS option.  

For robustness purposes we use different subsamples to estimate the model. We consider two subsamples: (i) 1976--2001; and 
(ii) 1976--1991. The results are, respectively, collected in Panels B and C of Table \ref{t.app}.

In the first robustness exercise we drop the last 9 years of observations and consider a subsample of 26 years, from 1976 to 2001. These results are displayed in Panel B. They show point estimates for both $\gamma$ and $\beta$ close to those in Panel A, although slightly larger in absolute value. Confidence intervals are also similar to those in Panel A. 

Lastly, we consider an even smaller subsample with 19 years, from 1976 to 1991. The results are presented in Table \ref{t.app} Panel C.  Except for TSLS, the point estimates of $\gamma$ are smaller than those in the full sample case and the confidence intervals shift to the left. In particular, the FE and GMM estimates of $\gamma$ decrease substantially, from 0.790 and 0.800 in the full sample case to 0.676 and 0.669, respectively. 
The HPJ estimate also decrease to 0.721 from 0.830 in the full sample case, but not so largely as GMM.
Moreover, FE-CCM, HPJ-CCM and HPJ-HPJ-HPJPB have substantially narrower confidence intervals than TSLS. Regarding the results on  $\beta$, the point estimates are not larger in absolute value than in the full sample case (except for TSLS) with wider confidence intervals than those using the full sample.

 \section{Concluding remarks}
 \label{sec: conclusion}
 
This paper has considered fixed effects (FE) (or within group) estimation for linear
panel data models under possible model misspecification, where the conditional mean
$\E[y_{it}|\bx_{it}, c_{i}]$ may not be additive in $c_{i}$ and $\bx_{it}$, nor linear in $\bx_{it}$, when both the number
of individuals, $n$, and the number of time periods, $T$, are large. We make several
contributions to the literature. First, we have shown that the probability limit of the
FE estimator is identical to the coefficient vector on $\bx_{it}$ of the best partial linear approximation
to $\E[y_{it}|\bx_{it}, c_{i}]$ which we regard as the pseudo-true parameter. Moreover, we have established the asymptotic distributional properties of the
FE estimator around the pseudo-true parameter when $n$ and $T$ jointly go to infinity, and shown
that after subtracting the incidental parameters bias, the rate of convergence of the
FE estimate depends on the degree of model misspecification and is either $(nT)^{-1/2}$ 
or $n^{-1/2}$. Secondly, we have developed asymptotically valid inference on the pseudo-true
parameter vector. We have established the asymptotic properties of the clustered
covariance matrix estimator and the cross section bootstrap when both model
misspecification and the incidental parameters bias (in the coefficient estimate) are
present. Finally, we have conducted Monte Carlo simulations and evaluated the finite
sample performance of the FE and its bias corrected estimators, and several inference
methods and confirmed that the cross section bootstrap to pivotal statistics works particularly
well. These inference methods were applied to a study of the unemployment
dynamics in the U.S. state level.

\section*{Acknowledgments}
The authors would like to express their appreciation to Ivan Fernandez-Val, Kazuhiko Hayakawa, SeoJeong Lee, Ryo Okui, and participants in the 2013 Econometric Society North America Summer Meeting, and the NY Camp Econometrics VIII, for useful comments and discussions regarding this paper. We also would like to thank the editor, the associate editor, and three anonymous referees for their careful reading and comments to improve the manuscript.

\newpage

\appendix 

\section{Additional results on cross section bootstrap}

\label{Appendix}

In this appendix, we consider bootstrapping pivotal statistics. We keep the notation used in Section \ref{sec: inference}. 
We first consider the bootstrap version of the CCM estimator (here and in what follows, ``bootstrap'' means ``cross section bootstrap''). Let us write 
\begin{equation*}
\bm{z}_{i}^{*}= ((y_{i1}^{*},(\bx_{i1}^{*})')',\dots,(y_{iT}^{*},(\bx_{iT}^{*})')')'.
\end{equation*}
Then the bootstrap CCM estimator is defined by 
\begin{equation*}
\hat{\Sigma}_{nT}^{*} = \frac{1}{n^{2}}\sum_{i=1}^{n} \left \{ \frac{1}{T}\sum_{t=1}^{T} (\bx^{*}_{it}-\bar{\bx}^{*}_{i})\hat{\epsilon}^{*}_{it} \right \}^{\otimes 2},
\end{equation*}
where $\hat{\epsilon}^{*}_{it}  = y_{it}^{*} - \bar{y}_{i}^{*} - (\bx_{it}^{*}-\bar{\bx}_{i}^{*})'\tilde{\bbeta}^{*}$ and $\tilde{\bbeta}^{*}$ is the bootstrap version of a suitable estimator of $\bbeta_{0}$ (we formally assume that $\tilde{\bbeta}^{*}$ can be written as a statistic of $\bm{z}_{1},\dots,\bm{z}_{n}$ and $w_{n1},\dots,w_{nn}$), for example, 
$\tilde{\bbeta}^{*} = \hat{\bbeta}^{*}$ or $\hat{\bbeta}^{*}_{1/2}$. Note that 
\begin{equation*}
\hat{\Sigma}_{nT}^{*} = \frac{1}{n^{2}}\sum_{i=1}^{n} w_{ni} \left \{ \frac{1}{T}\sum_{t=1}^{T} (\bx_{it}-\bar{\bx}_{i})\hat{\epsilon}_{it}(\tilde{\bbeta}^{*}) \right \}^{\otimes 2},
\end{equation*}
where $\hat{\epsilon}_{it} (\bbeta)  = y_{it} - \bar{y}_{i} - (\bx_{it}-\bar{\bx}_{i})'\bbeta$. Then we have the following proposition.

\begin{proposition}
\label{prop4}
Suppose that conditions (A1)-(A3) are satisfied. Let  $\tilde{\bbeta}^{*}$ be such that $\| \tilde{\bbeta}^{*} - \bbeta_{0} \| = O_{\Pr_{W}}[ \max \{ d_{nT}^{-1/2}, T^{-1}  \}]$ in probability. Letting $T=T_{n} \to \infty$ as $n \to \infty$, we have:
\begin{equation*}
\hat{\Sigma}_{nT}^{*} = \Sigma_{nT} + O_{\Pr_{W}}[\{ \max \{ n^{-1/2}T^{-1} d_{nT}^{-1/2},n^{-1/2}d_{nT}^{-1} \}],
\end{equation*}
in probability. In particular, as long as $T=T_{n} \to \infty$, we have $\| \hat{\Sigma}^{*}_{nT} - \Sigma_{nT} \|_{\op} = o_{\Pr_{W}}(d_{nT}^{-1})$ in probability.
\end{proposition}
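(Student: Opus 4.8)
The plan is to start from the deterministic identity for $\hat{\Sigma}_{nT}^{*}$ displayed just above, namely $\hat{\Sigma}_{nT}^{*} = n^{-2}\sum_{i=1}^{n}w_{ni}\,\hat{\bm{\xi}}_{i}(\tilde{\bbeta}^{*})^{\otimes 2}$ with $\hat{\bm{\xi}}_{i}(\bbeta) := T^{-1}\sum_{t=1}^{T}(\bx_{it}-\bar{\bx}_{i})\hat{\epsilon}_{it}(\bbeta)$, and to compare it with $\Sigma_{nT}$ in two stages: first replace the (data- and weight-dependent) initial value $\tilde{\bbeta}^{*}$ by the constant $\bbeta_{0}$, then analyse the weighted average $n^{-2}\sum_{i}w_{ni}\hat{\bm{\xi}}_{i}(\bbeta_{0})^{\otimes 2}$ exploiting the multinomial structure of $(w_{n1},\dots,w_{nn})$. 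Throughout write $\hat{A}_{i} := T^{-1}\sum_{t}(\bx_{it}-\bar{\bx}_{i})^{\otimes 2}$ and use $\hat{\bm{\xi}}_{i}(\bbeta) = \hat{\bm{\xi}}_{i}(\bbeta_{0}) - \hat{A}_{i}(\bbeta-\bbeta_{0})$, where $\hat{\bm{\xi}}_{i}(\bbeta_{0}) = T^{-1}\sum_{t}\tilde{\bx}_{it}\epsilon_{it} - \bar{\tilde{\bx}}_{i}\,\bar{\epsilon}_{i}$. The moment bounds needed — $\E[\|\hat{\bm{\xi}}_{i}(\bbeta_{0})\|^{2}] = O(n/d_{nT})$, $\E[\|\hat{\bm{\xi}}_{i}(\bbeta_{0})\|^{4}] = O((n/d_{nT})^{2})$ and $\E[\|\hat{A}_{i}\|_{\op}^{2}] = O(1)$, uniformly in $n$ — follow from (A1)--(A3), stationarity and the Rosenthal/Yokoyama-type inequalities for $\alpha$-mixing sequences already invoked in the proof of Proposition \ref{prop2}, distinguishing the case $\E[\tilde{\bx}_{it}\epsilon_{it}\mid c_{i}]=\bm{0}$ a.s. (centred weakly dependent partial sums, $d_{nT}=nT$) from its negation ($d_{nT}=n$).

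\textbf{Step 1 (removing the initial estimator).} Expanding $\hat{\bm{\xi}}_{i}(\tilde{\bbeta}^{*})^{\otimes 2}$ around $\hat{\bm{\xi}}_{i}(\bbeta_{0})^{\otimes 2}$ gives
\begin{equation*}
\Big\| \hat{\Sigma}_{nT}^{*} - \frac{1}{n^{2}}\sum_{i=1}^{n}w_{ni}\,\hat{\bm{\xi}}_{i}(\bbeta_{0})^{\otimes 2} \Big\|_{\op} \le 2\,\|\tilde{\bbeta}^{*}-\bbeta_{0}\|\,\frac{1}{n^{2}}\sum_{i=1}^{n}w_{ni}\|\hat{\bm{\xi}}_{i}(\bbeta_{0})\|\,\|\hat{A}_{i}\|_{\op} + \|\tilde{\bbeta}^{*}-\bbeta_{0}\|^{2}\,\frac{1}{n^{2}}\sum_{i=1}^{n}w_{ni}\|\hat{A}_{i}\|_{\op}^{2}.
\end{equation*}
Since $\E_{\Pr_{W}}[w_{ni}]=1$, taking $\Pr_{W}$-expectations and then $\Pr$-expectations and applying a conditional Markov argument yields $n^{-2}\sum_{i}w_{ni}\|\hat{\bm{\xi}}_{i}(\bbeta_{0})\|\|\hat{A}_{i}\|_{\op} = O_{\Pr_{W}}(n^{-1/2}d_{nT}^{-1/2})$ and $n^{-2}\sum_{i}w_{ni}\|\hat{A}_{i}\|_{\op}^{2} = O_{\Pr_{W}}(n^{-1})$, both in probability. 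Combined with the hypothesis $\|\tilde{\bbeta}^{*}-\bbeta_{0}\| = O_{\Pr_{W}}[\max\{d_{nT}^{-1/2},T^{-1}\}]$ in probability, the right-hand side above is $O_{\Pr_{W}}[\max\{n^{-1/2}T^{-1}d_{nT}^{-1/2},\,n^{-1/2}d_{nT}^{-1}\}]$ in probability, the quadratic term being of strictly smaller order.

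\textbf{Step 2 (the weighted average at $\bbeta_{0}$).} Write $n^{-2}\sum_{i}w_{ni}\hat{\bm{\xi}}_{i}(\bbeta_{0})^{\otimes 2} = \hat{\Sigma}_{nT}^{\circ} + n^{-2}\sum_{i}(w_{ni}-1)\hat{\bm{\xi}}_{i}(\bbeta_{0})^{\otimes 2}$, where $\hat{\Sigma}_{nT}^{\circ} := n^{-2}\sum_{i}\hat{\bm{\xi}}_{i}(\bbeta_{0})^{\otimes 2}$ is the CCM estimator computed with the infeasible initial value $\bbeta_{0}$. A constant trivially meets the rate hypothesis of Proposition \ref{prop2}, so that proposition gives $\hat{\Sigma}_{nT}^{\circ} = \Sigma_{nT} + O_{\Pr}[\max\{n^{-1/2}T^{-1}d_{nT}^{-1/2},\,n^{-1/2}d_{nT}^{-1}\}]$. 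For the remainder, multinomiality gives $\Var_{\Pr_{W}}(w_{ni}) = 1-n^{-1}$, $\Cov_{\Pr_{W}}(w_{ni},w_{nj}) = -n^{-1}$, whence for each entry $(a,b)$,
\begin{equation*}
\Var_{\Pr_{W}}\!\Big( \frac{1}{n^{2}}\sum_{i=1}^{n}(w_{ni}-1)\,[\hat{\bm{\xi}}_{i}(\bbeta_{0})^{\otimes 2}]^{ab} \Big) \le \frac{1}{n^{4}}\sum_{i=1}^{n}\|\hat{\bm{\xi}}_{i}(\bbeta_{0})\|^{4}.
\end{equation*}
Taking $\Pr$-expectations, the right-hand side is $O(n^{-3}(n/d_{nT})^{2}) = O(n^{-1}d_{nT}^{-2})$, so a conditional Chebyshev-then-Markov argument shows $n^{-2}\sum_{i}(w_{ni}-1)\hat{\bm{\xi}}_{i}(\bbeta_{0})^{\otimes 2} = O_{\Pr_{W}}(n^{-1/2}d_{nT}^{-1})$ in probability. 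Combining Steps 1 and 2 gives $\hat{\Sigma}_{nT}^{*} = \Sigma_{nT} + O_{\Pr_{W}}[\max\{n^{-1/2}T^{-1}d_{nT}^{-1/2},\,n^{-1/2}d_{nT}^{-1}\}]$ in probability; writing the rate as $d_{nT}^{-1}\max\{n^{-1/2}T^{-1}d_{nT}^{1/2},\,n^{-1/2}\}$, the bracket tends to $0$ in both cases ($T^{-1/2}$ and $n^{-1/2}$ when $d_{nT}=nT$; $T^{-1}$ and $n^{-1/2}$ when $d_{nT}=n$), which yields $\|\hat{\Sigma}_{nT}^{*}-\Sigma_{nT}\|_{\op} = o_{\Pr_{W}}(d_{nT}^{-1})$ in probability.

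The main obstacle is the two-layer bookkeeping across $\Pr$ and $\Pr_{W}$: statements of the form ``the $\Pr_{W}$-conditional variance (or expectation) has $\Pr$-expectation of order $a_{n}^{2}$'' must be upgraded to ``$O_{\Pr_{W}}(a_{n})$ in probability'' in the precise sense of Section \ref{sec: inference}, which is where the conditional Chebyshev/Markov chaining enters; and one must verify that $\E[\|\hat{\bm{\xi}}_{i}(\bbeta_{0})\|^{4}] = O((n/d_{nT})^{2})$ holds uniformly in $n$, resting on a moment inequality for $\alpha$-mixing sequences, the $(8+\delta)$-moment condition (A2), the summability condition in (A1), and the case split according to whether $\E[\tilde{\bx}_{it}\epsilon_{it}\mid c_{i}]=\bm{0}$ a.s. These ingredients are already available from the proof of Proposition \ref{prop2}, so the genuinely new content is the analysis of the multinomial weights in Steps 1 and 2.
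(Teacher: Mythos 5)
Your overall architecture is sound and close to the paper's: both proofs reduce the problem to Proposition \ref{prop2} plus an analysis of the multinomial weights, use $\E_{\Pr_{W}}[w_{ni}]=1$ and the identity $\E_{\Pr_W}[\{\sum_i(w_{ni}-1)a_i\}^2]\le\sum_i a_i^2$, and chain conditional Markov/Chebyshev bounds through Lemma \ref{lemB1}. Step 1 is correct (it handles the paper's $R_{2i}^*,R_{3i}^*,R_{5i}^*$ terms, needing only second moments of $\hat{\bm\xi}_i(\bbeta_0)$ and $\hat A_i$), and the trick of invoking Proposition \ref{prop2} with the constant "estimator" $\bbeta_0$ is a legitimate shortcut.

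There is, however, a genuine gap in Step 2. By bundling $-\bar{\tilde{\bx}}_i\bar\epsilon_i$ into $\hat{\bm\xi}_i(\bbeta_0)$ and then controlling $n^{-2}\sum_i(w_{ni}-1)\hat{\bm\xi}_i(\bbeta_0)^{\otimes 2}$ via $n^{-4}\sum_i\|\hat{\bm\xi}_i(\bbeta_0)\|^4$, you need $\E[\|\hat{\bm\xi}_i(\bbeta_0)\|^4]=O((n/d_{nT})^2)$, which in the case $\E[\tilde{\bx}_{it}\epsilon_{it}\mid c_i]=\bm 0$ a.s.\ (so $d_{nT}=nT$) forces $\E[\|\bar{\tilde{\bx}}_i\|^4\,\bar\epsilon_i^{\,4}]=O(T^{-2})$. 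This is an eighth-order moment of normalized partial sums: via Cauchy--Schwarz it requires $\E[\|\bar{\tilde{\bx}}_i\|^8]$ and $\E[\bar\epsilon_i^{\,8}]$ to decay, i.e.\ Theorem \ref{thmA2} with $r=8$, whose hypothesis $\sum_k(k+1)^{3}\alpha(k)^{\delta/(8+\delta)}<\infty$ is strictly stronger than the $\sum_k k\,\alpha(k)^{\delta/(8+\delta)}<\infty$ assumed in (A1); the trivial bound from (A2) gives only $\E[\bar\epsilon_i^{\,8}]=O(1)$, which is not enough whenever $T$ is not small relative to $n$. The proof of Proposition \ref{prop2} never establishes such a bound --- it only uses fourth-order partial-sum moments ($r=4$ in Theorem \ref{thmA2} and the footnoted extension of Yokoyama's argument) --- so your appeal to "inequalities already invoked" there does not cover this step. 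The fix is exactly the paper's finer decomposition: write $\hat{\bm\xi}_i(\bbeta_0)^{\otimes 2}=(T^{-1}\sum_t\tilde{\bx}_{it}\epsilon_{it})^{\otimes 2}-R_{1i}-R_{1i}'+R_{4i}$, bound the weighted sums of $R_{1i}$ and $R_{4i}$ by \emph{first} moments of their operator norms (which only involve $\E[\|\bar{\tilde{\bx}}_i\bar\epsilon_i\|^2]=O(T^{-2})$, a fourth-order quantity), and reserve the conditional-variance argument for $n^{-2}\sum_i(w_{ni}-1)(T^{-1}\sum_t\tilde{\bx}_{it}\epsilon_{it})^{\otimes 2}$, further split into the centered and conditional-mean pieces (the terms (I)--(IV) in the paper) so that only fourth-order mixed moments of partial sums are ever needed.
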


Proposition \ref{prop4} establishes the rate of convergence of $\hat{\Sigma}_{nT}^{*}$. This result is parallel to that in Proposition \ref{prop2}.
Note that the condition that $\| \tilde{\bbeta}^{*} - \bbeta_{0} \| = O_{\Pr_{W}}[ \max \{ d_{nT}^{-1/2}, T^{-1}  \}]$ in probability is satisfied with $\tilde{\bbeta}^{*} = \hat{\bbeta}^{*}$ or $\tilde{\bbeta}^{*} = \hat{\bbeta}^{*}_{1/2}$. 
Assume now that $\Sigma$ is nonsingular in either case of $\E[ \tilde{\bx}_{it} \epsilon_{it} \mid c_{i}] = \bm{0}$ a.s. or not. Consider, for the sake of simplicity, testing the null hypothesis $H_{0}: \beta^{a} = \beta_{0}^{a}$, where $1 \leq a \leq p$ is fixed, and consider the  $t$-statistic and its bootstrap version based on either the FE or HPJ estimate: 
\begin{equation*}
\hat{t} = \frac{\bm{e}_{a}'(\tilde{\bbeta} - \bbeta_{0})}{\sqrt{\bm{e}_{a}'\hat{A}^{-1}\hat{\Sigma}_{nT}\hat{A}^{-1}\bm{e}_{a}}}, \ \hat{t}^{*}=\frac{\bm{e}_{a}'(\tilde{\bbeta}^{*} - \tilde{\bbeta})}{\sqrt{\bm{e}_{a}'(\hat{A}^{*})^{-1}\hat{\Sigma}^{*}_{nT}(\hat{A}^{*})^{-1}\bm{e}_{a}}},  \ \text{with} \ \tilde{\bbeta} = \hat{\bbeta} \ \text{or} \ \hat{\bbeta}_{1/2},
\end{equation*} 
where $\bm{e}_{a}$ is the $p \times 1$ vector such that $e_{a}^{a} = 1$ and $e_{a}^{b}=0$ for $b \neq a$. Here because $\hat{A}^{*} = A + o_{\Pr_{W}}(1)$ (see Lemma \ref{lem3}) and 
$\hat{\Sigma}_{nT}^{*} = \Sigma_{nT} + o_{\Pr_{W}}(d_{nT}^{-1})$ in probability, we can deduce that under conditions (A1)-(A3), 
\begin{equation*}
\sup_{x \in \mathbb{R}} |\Pr_{W}( \hat{t}^{*} \leq x )  - \Pr(N(0,1) \leq x) | \stackrel{\Pr}{\to} 0.
\end{equation*}
This holds as long as $T=T_{n} \to \infty$ and does not require any specific growth restriction on $T$. Moreover, when $\tilde{\bbeta}  = \hat{\bbeta}_{1/2}$, under conditions (A1)-(A3), we have
\begin{equation}
\sup_{x \in \mathbb{R}} |\Pr_{W}( \hat{t}^{*} \leq x )  - \Pr(\hat{t} \leq x) | \stackrel{\Pr}{\to} 0, \label{bootB}
\end{equation}
provided that $\sqrt{d_{nT}}/T^{2} \to 0$.

\section{Additional simulation results}

\label{additional simulation}

\subsection{Effect of $T$ on the performance on estimators}

In order to shed more light on the performance of the proposed methods, Figure \ref{fig1} presets the bias and RMSE of the FE and HPJ estimators from a fixed cross-section when varying the time series $T$.  In these simulations, we only considered the AR(2) model given in the second example with $\phi_{1}=\phi_{2}=0.4$. We fix the cross-section dimension at $n=50$. The left panel shows the bias of the estimators as a function of $T$. The results show that the HPJ estimator has a small bias for small $T$, but the bias disappears even for relatively small $T$. On the contrary, the bias in the FE is large, and it remains relatively substantial even for large time series. The right panel displays the RMSE for the estimators. It shows a good performance of the HPJ estimator for moderate time dimensions.
 \begin{figure}
 \begin{center}
 \centerline{\includegraphics[width=.65\textwidth]{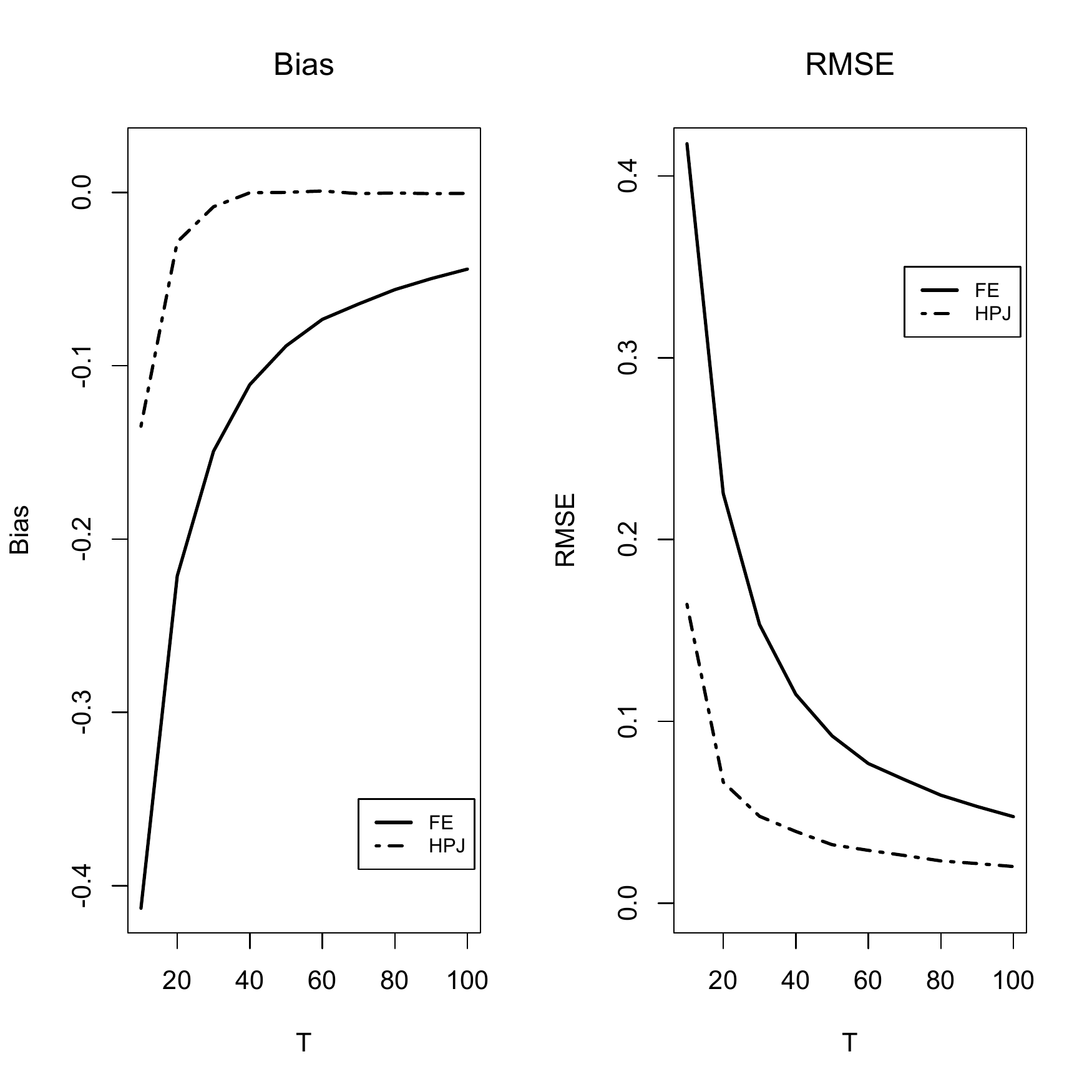}}
 \caption{\emph{Small sample performance of the estimators when $T$ increases for $n=50$.} \label{fig1}}
 \end{center}
 \end{figure}

\subsection{Additional simulation design: Panel EXPAR model}

Here we consider the case where the true DGP is
 \begin{equation*}
 y_{it} = c_{i} + \rho_1( y_{i,t-1}-c_{i}) + \rho_2 \exp( -(y_{i,t-1}- c_{i})^2) + u_{it},
 \end{equation*}
 where  $u_{it} \sim \ \text{i.i.d.} \ N(0,1)$, $c_{i} \sim \ \text{i.i.d.} \ U(-0.5,0.5)$, $\rho_{1}=0.8$ and $\rho_{2}=1$. 
 This model is a panel-data version of exponential AR (EXPAR) models \citep[see][]{O85}. Note that by \citet[][Example 3.2]{AH96}, for each $i \geq 1$, the process $\{ (y_{it} - c_{i}) \}_{t \in \mathbb{Z}}$ (which is independent of $c_{i}$) is geometrically ergodic, so that condition (A1) is satisfied. As in the previous case, we incorrectly fit a panel AR(1) model and estimate the slope parameter. The value of the pseudo-true parameter is $\beta_{0}=0.63$. The simulation results for this case are presented in Table \ref{table.mc.3}. 

 {\bf Table \ref{table.mc.3}}: The FE estimate is severely biased  and the FE-CCM option performs poorly because of the presence of large bias. On the other hand, the HPJ estimate is able to largely remove the bias at the cost of 
 slight variance inflation relative to the FE estimate in the finite sample. 
 In this case, the HK and GMM estimates are able to reduce the bias to some extent, although GMM still presenting larger bias. The empirical coverage of the HK option is reasonably well (which is partly due to the fact that the panel EXPAR model is ``close'' to the panel AR(1) model). GMM is under coverage. In addition, the empirical coverage of the HK and GMM options worsen as $n$ becomes large. 
 Among the other options,  HPJ-HPJPB works particularly  well.

\section{Proofs for Section \ref{sec: asymptotics}}
\label{Appendix A}

We shall recall here the notational convention. For a generic vector $\bm{z}$, $z^{a}$ denotes the $a$-th element of $\bm{z}$, and for a generic matrix $A$, $A^{ab}$ denotes the $(a,b)$-th element of $A$. Moreover, for a sequence $\{ z_{it} \}$ indexed by $(i,t)$, we write $\bar{z}_{i} = T^{-1} \sum_{t=1}^{T} z_{it}$.

\subsection{Inequalities for $\alpha$-mixing processes}

In this section, we introduce some inequalities for $\alpha$-mixing processes, which will be used in the proofs below. 
Let $\{ \xi_{t} \}$ denote a stationary process taking values in some Polish space $S$, and let $\alpha (k)$ denote its $\alpha$-mixing coefficients. 

\begin{theorem}[\cite{D68}]
\label{thmA1}
Let $\mathcal{A}_{i}^{j}$ denote the $\sigma$-field generated by $\xi_{i},\dots,\xi_{j}$ ($i < j$). Pick any integer $k \geq 1$. Let $\xi$ and $\eta$ be real-valued random variables measurable with respect to 
$\mathcal{A}_{-\infty}^{0}$ and $\mathcal{A}_{k}^{\infty}$, respectively. If $\E[ | \xi |^{q} ] < \infty$ and $\E[ | \eta |^{r} ] < \infty$ for some $q > 1$ and $r > 1$ such that $q^{-1} + r^{-1} < 1$, then we have
\begin{equation*}
| \E[ \xi \eta ] - \E[ \xi ] \E[ \eta ] | \leq 12  (\E[ | \xi |^{q} ])^{q^{-1}}(\E[ | \eta |^{r} ])^{r^{-1}} \alpha(k)^{1-q^{-1}-r^{-1}}.
\end{equation*}
\end{theorem}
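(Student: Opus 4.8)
The plan is to follow the classical route for Davydov's covariance inequality: prove it first for bounded random variables, then pass to the general case by truncation and an optimization over the truncation levels. First I would establish the auxiliary bound that if $\xi$ and $\eta$ are \emph{bounded} and measurable with respect to $\mathcal{A}_{-\infty}^{0}$ and $\mathcal{A}_{k}^{\infty}$ respectively, then $|\Cov(\xi,\eta)| \le 4\,\|\xi\|_{\infty}\|\eta\|_{\infty}\,\alpha(k)$. To see this I would decompose $\xi = \xi^{+}-\xi^{-}$ and $\eta = \eta^{+}-\eta^{-}$ into positive and negative parts, write each bounded nonnegative piece $U$ as $U = \int_{0}^{\infty}\mathbf{1}\{U>s\}\,ds$, and combine this layer-cake representation with the definition of the mixing coefficient, which gives $|\Cov(U,V)| \le \int_{0}^{\|U\|_{\infty}}\!\int_{0}^{\|V\|_{\infty}}\alpha(k)\,ds\,dt = \|U\|_{\infty}\|V\|_{\infty}\alpha(k)$ for the layers; summing the four cross terms and using $\|\xi^{+}\|_{\infty}+\|\xi^{-}\|_{\infty}\le 2\|\xi\|_{\infty}$ produces the factor $4$.

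Next I would treat the general case. After normalizing so that $\|\xi\|_{q}=\|\eta\|_{r}=1$ (the case $\alpha(k)=0$ being trivial by independence), I would fix thresholds $C,D\ge 1$ and split $\xi=\xi'+\xi''$ with $\xi'=\xi\,\mathbf{1}\{|\xi|\le C\}$, and likewise $\eta=\eta'+\eta''$ with $\eta'=\eta\,\mathbf{1}\{|\eta|\le D\}$; each piece retains the relevant measurability. Bilinearity of covariance gives the four terms $\Cov(\xi',\eta')$, $\Cov(\xi'',\eta')$, $\Cov(\xi',\eta'')$, $\Cov(\xi'',\eta'')$. Step~1 bounds the first by $4CD\,\alpha(k)$. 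For the remaining three I would use $|\Cov(U,V)|\le \E|UV|+\E|U|\,\E|V|$, the Markov-type tail bound $\E[|\xi|\mathbf{1}\{|\xi|>C\}]\le C^{1-q}$ and its analogue for $\eta$, and Hölder's inequality with exponents $q$ and $q/(q-1)$; here the hypothesis $q^{-1}+r^{-1}<1$ is exactly what is needed, since it forces $q/(q-1)<r$, so that $\|\eta\|_{q/(q-1)}\le\|\eta\|_{r}=1$ and the corresponding tail moments of $\eta$ genuinely decay. This leads to a remainder bounded by a universal constant times $C^{1-q}D+CD^{1-r}$ plus a further term decaying as $D\to\infty$. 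Finally I would balance the terms by choosing $C=\alpha(k)^{-1/q}$ and $D=\alpha(k)^{-1/r}$ (both $\ge 1$ since $\alpha(k)\le 1$); every term is then of exact order $\alpha(k)^{\,1-q^{-1}-r^{-1}}$, and undoing the normalization restores the factor $\|\xi\|_{q}\|\eta\|_{r}$.

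The structural part is routine; the genuinely delicate point is obtaining the \emph{precise} constant $12$. The naive balancing above yields the inequality with \emph{some} universal constant, and pinning it down to $12$ requires Davydov's more careful choice of the truncation levels together with a sharper handling of the $\Cov(\xi'',\eta'')$ term. If the exact constant were not at issue, a shorter alternative would be to invoke Rio's covariance inequality $|\Cov(\xi,\eta)|\le 2\int_{0}^{2\alpha(k)}Q_{|\xi|}(u)\,Q_{|\eta|}(u)\,du$ (with $Q$ denoting the quantile function) and then apply a three-function Hölder inequality with exponents $q$, $r$, and $(1-q^{-1}-r^{-1})^{-1}$, which in fact gives a constant below $12$; that, however, is a post-Davydov refinement rather than his original argument.
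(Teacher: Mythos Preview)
The paper does not prove this theorem: it is stated with attribution to \cite{D68} and is used as a black-box tool (the very next paragraph begins ``To illustrate an application of Davidov's inequality\ldots''), so there is no proof in the paper to compare your proposal against. Your sketch is the standard truncation route to Davydov's inequality and is essentially correct as an outline; as you note, the constant $12$ comes from a specific balancing in Davydov's original argument, and your Rio-inequality alternative would indeed yield a smaller constant, but none of that is needed here since the authors simply quote the result.
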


To illustrate an application of Davidov's inequality, suppose that $S=\mathbb{R}$, and assume that $\E[ |\xi_{1}|^{q}] < \infty$ and $\sum_{k=1}^{\infty} \alpha(k)^{1-2/q} < \infty$ for some $q > 2$. 
Then by Davidov's inequality, we have $\Var (\sum_{t=1}^{T} \xi_{t}) \leq CT$ with
$C=12 \E[| \xi_{1} |^{q}]^{2/q} \sum_{k=0}^{\infty} \alpha(k)^{1-2/q}$. For bounding  higher order moments, we make use of Yokoyama's (1980) Theorem 3. 

\begin{theorem}[\cite{Y80}, Theorem 3]
\label{thmA2}
Suppose that $S=\mathbb{R}$. Assume that $\E[ \xi_{1} ] = 0$ and for some constants $\delta > 0$ and  $r > 2$, $\E[ | \xi_{1} |^{r+\delta}] < \infty$. 
If $\sum_{k=0}^{\infty} (k+1)^{r/2-1} \alpha (k)^{\delta/(r+\delta)} < \infty$, then there exists a constant $C$ independent of $T$ such that 
\begin{equation*}
\E\left [ \left | \sum_{t=1}^{T}  \xi_{t} \right |^{r} \right ] \leq C T^{r/2}.
\end{equation*}
\end{theorem}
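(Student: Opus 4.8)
Write $S_{T}=\sum_{t=1}^{T}\xi_{t}$ and set $\eta=\delta/(r+\delta)\in(0,1)$, so that the hypothesis reads $\sum_{k\ge 0}(k+1)^{r/2-1}\alpha(k)^{\eta}<\infty$; in particular $\sum_{k}\alpha(k)^{\eta}<\infty$ and $\alpha(k)\to 0$. I would first record the variance bound $\Var(S_{m})=\sum_{|k|<m}(m-|k|)\Cov(\xi_{0},\xi_{k})\le C_{0}m$, which is immediate from Davidov's inequality (Theorem~\ref{thmA1}). Then I would dispose of the case $r=2m$ with $m$ a positive integer, which can be done with Davidov's inequality alone. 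Expand
\[
\E[S_{T}^{2m}]=\sum_{t_{1},\dots,t_{2m}=1}^{T}\E[\xi_{t_{1}}\cdots\xi_{t_{2m}}],
\]
and classify each multi-index by its set of distinct time points and, writing these sorted as $u_{1}<\dots<u_{\ell}$, by the vector of internal gaps. For multi-indices with $\ell\le m$ distinct points, bound $|\E[\xi_{t_{1}}\cdots\xi_{t_{2m}}]|\le\E[|\xi_{1}|^{2m}]$ by H\"older and observe there are only $O(T^{m})$ of them. For the remaining multi-indices, split the product at its largest gap, of size $g$, into two factors separated by $g$, apply Davidov's inequality with the exponents $(2m+\delta)/a$ and $(2m+\delta)/b$ for the two factors (where $a+b=2m$), which pays exactly a factor $C\alpha(g)^{\eta}$, and iterate; this bounds $|\E[\xi_{t_{1}}\cdots\xi_{t_{2m}}]|$ by a constant times a product of factors $\alpha(g)^{\eta}$ over a subset of the larger gaps, the smaller gaps being absorbed into the constant.

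Summing these bounds over multiplicity patterns, over which gaps are ``large,'' and over the positions of the points, each small gap ranges up to the size of a neighbouring large gap and so contributes powers of that large gap's length; matching these powers against the factor $\alpha(g)^{\eta}$ coming from a large gap of length of order $g$ is precisely what the weighted summability $\sum_{k}(k+1)^{m-1}\alpha(k)^{\eta}<\infty$ controls, and the outcome is again $O(T^{m})$. This establishes $\E[S_{T}^{r}]\le C T^{r/2}$ when $r$ is an even integer. For general real $r>2$ one cannot simply pass to the next even integer, since the moment hypothesis $\E[|\xi_{1}|^{r+\delta}]<\infty$ need not upgrade to $\E[|\xi_{1}|^{2\lceil r/2\rceil+\delta'}]<\infty$. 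I would instead use a big-block/small-block decomposition with an induction on $T$: assuming $\E[|S_{m}|^{r}]\le K m^{r/2}$ for all $m<T$, partition $\{1,\dots,T\}$ into big blocks $H_{1},\dots,H_{k}$ of length $p$ (a small fixed fraction of $T$) alternating with small blocks of length $q\to\infty$, set $\zeta_{j}=\sum_{t\in H_{j}}\xi_{t}$ and let $\rho_{j}$ be the small-block sums, so $S_{T}=\sum_{j}\zeta_{j}+\sum_{j}\rho_{j}$. A coupling lemma for $\alpha$-mixing replaces $(\zeta_{j})$ by independent copies $(\zeta_{j}^{*})$ at a cost governed by $\alpha(q)$; Rosenthal's inequality then gives $\E[|\sum_{j}\zeta_{j}^{*}|^{r}]\le C_{r}\big[(\sum_{j}\Var\zeta_{j})^{r/2}+\sum_{j}\E[|\zeta_{j}|^{r}]\big]$, whose first term is $O(T^{r/2})$ by the variance bound and whose second is $\le C_{r}kK p^{r/2}=C_{r}K\,T\,p^{r/2-1}$, which is absorbed into $\tfrac12 K T^{r/2}$ once $p$ is chosen a suitably small fraction of $T$. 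The small-block sum and — with a careful, not crude, estimate in terms of $\alpha(q)$ — the coupling error are then shown to be $o(T^{r/2})$ via the inductive hypothesis applied to the shorter blocks, closing the induction; a crude direct bound covers small $T$.

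The main obstacle is the bookkeeping rather than any single inequality. In the even-integer route it is the combinatorial argument of the previous paragraph: one must verify that after the iterated Davidov splittings the sum over positions, multiplicities, and gap configurations stays $O(T^{m})$, which works exactly because each distinct time point beyond the first contributes one gap and one factor $\alpha(\cdot)^{\eta}$ while the weight $(k+1)^{r/2-1}$ in the hypothesis absorbs the multiplicities with which a given gap length can occur. In the general-$r$ route it is the control of constants through the recursion — preventing the Rosenthal constant and the small-block contribution from compounding, which is what forces $p\asymp T$ — together with a sharp enough estimate of the coupling error; both are the points at which the weighted summability $\sum_{k}(k+1)^{r/2-1}\alpha(k)^{\delta/(r+\delta)}<\infty$ is used in full strength.
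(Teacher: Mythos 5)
This statement is not proved in the paper at all: it is imported verbatim from \cite{Y80} as a known tool (the paper's ``proof'' is the citation), so there is no in-paper argument to match yours against. Judged on its own merits, your sketch splits into two halves of very different reliability. The even-integer half ($r=2m$) --- expand $\E[S_T^{2m}]$, split each mixed moment at its largest gap via Davidov with exponents $(2m+\delta)/a$ and $(2m+\delta)/b$ so that each split pays $\alpha(g)^{\delta/(2m+\delta)}$, and check that the surviving configurations have at most $m$ free cluster locations --- is the standard combinatorial argument and is sound in outline; the weighted summability $\sum_k(k+1)^{m-1}\alpha(k)^{\eta}<\infty$ is indeed exactly what absorbs the positions of indices trapped inside a large gap. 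Your observation that one cannot simply pass to the next even integer for general real $r$ is also correct and important.

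The general-$r$ half is where there is a genuine gap. Your route hinges on a ``coupling lemma for $\alpha$-mixing'' that replaces the big-block sums by independent copies at a cost governed by $\alpha(q)$, after which Rosenthal's inequality applies. But no such clean coupling exists for $\alpha$-mixing: Berbee-type coupling (with $\Pr(\zeta_j^{*}\neq\zeta_j)$ controlled by the mixing coefficient) requires $\beta$-mixing, and the available substitute for $\alpha$-mixing (Bradley's lemma) only yields a tail bound of the form $\Pr(|\zeta_j-\zeta_j^{*}|\ge\epsilon)\le C(\|\zeta_j\|_{\gamma}/\epsilon)^{\gamma/(2\gamma+1)}\alpha(q)^{2\gamma/(2\gamma+1)}$, which degrades badly when converted to an $L^{r}$ bound and compounds over the $k$ blocks; it is not at all clear that the resulting error is $o(T^{r/2})$ under only $\sum_k(k+1)^{r/2-1}\alpha(k)^{\delta/(r+\delta)}<\infty$, and you do not verify it. This is precisely why Yokoyama's actual proof (and the modern treatments for strong mixing, e.g.\ via quantile/rearrangement techniques) avoids coupling altogether: Yokoyama runs a dyadic induction on $\E|S_{2n}|^{r}$, expanding $|S_n+\tilde S_n|^{r}$ and bounding the cross moments $\E[|S_n|^{r-j}|\tilde S_n|^{j}]$ directly by Davidov's inequality after sacrificing a short separating block, with the weighted summability controlling the sum over scales and gap lengths. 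To repair your argument you would either have to carry out that induction, or strengthen the hypothesis to $\beta$-mixing so that a genuine coupling is available.
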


In Theorem \ref{thmA2}, the constant $C$ depends only on $r, \delta$ and bounds on $\E[| \xi_{1} |^{r+\delta}]$ and $\sum_{k=0}^{\infty} (k+1)^{r/2-1} \alpha (k)^{\delta/(r+\delta)}$.

\subsection{Proof of Lemma \ref{lem1}}

The lemma is deduced directly from Theorem \ref{thmA1} and condition (A1).  \qed

\subsection{Proof of Proposition \ref{prop1}}

We provide a proof of Proposition \ref{prop1}. Define
\begin{align*}
\hat{S}_{1} = \frac{1}{nT} \sum_{i=1}^{n} \sum_{t=1}^{T} \tilde{\bx}_{it}\epsilon_{it}, \ \hat{S}_{2} = \frac{1}{n} \sum_{i=1}^{n} \bar{\tilde{\bx}}_{i}\bar{\epsilon}_{i}.
\end{align*}
By definition, we have $\hat{\bbeta} - \bbeta_{0} = \hat{A}^{-1} (\hat{S}_{1} - \hat{S}_{2})$. We  prepare a technical lemma. 

\begin{lemma}
\label{lem2}
Suppose that conditions (A1)-(A3) are satisfied. 
As $n \to \infty$ (and automatically $T=T_{n} \to \infty$), we have: (i) $\hat{A}$ is nonsingular with probability approaching one, and \\
 $\hat{A}^{-1} = A^{-1} \sum_{m=0}^{\infty} T^{-m} (D_{T}A^{-1})^{m} + O_{\Pr} (n^{-1/2})$; (ii) $\E[ \hat{S}_{2} ] = T^{-1} B_{T} = O(T^{-1})$; (iii) $\hat{S}_{2} - \E[ \hat{S}_{2}] = O_{\Pr} ( n^{-1/2} T^{-1} )$;
(iv) $\sqrt{nT} (\hat{S}_{1} - \E[\hat{S}_{1} \mid \{ c_{i} \}_{i=1}^{n}]) \stackrel{d}{\to} N(\bm{0},V_{1})$, where $V_{1}=\sum_{k=-\infty}^{\infty} \E[ (\tilde{\bx}_{1,1}\epsilon_{1,1} - \E[\tilde{\bx}_{1,1}\epsilon_{1,1} \mid c_{1}])(\tilde{\bx}_{1,1+k}\epsilon_{1,1+k}-\E[\tilde{\bx}_{1,1+k}\epsilon_{1,1+k} \mid c_{1}])']$ (the right side is absolutely convergent in $\| \cdot \|_{\op}$).
\end{lemma}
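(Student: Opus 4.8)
The plan is to prove the four claims separately, exploiting throughout the i.i.d.\ structure across $i$ together with the conditional mixing/moment bounds of (A1)--(A2) to control the time-series sums, with all implied constants uniform in $c_{i}$ and in $T$.

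For (ii) I would compute directly. Since $\bx_{it}-\bar{\bx}_{i}=\tilde{\bx}_{it}-\bar{\tilde{\bx}}_{i}$ and the blocks $(c_{i},y_{i\cdot},\bx_{i\cdot})$ are i.i.d.\ across $i$ with conditionally stationary time series, $\E[\hat{S}_{2}]=\E[\bar{\tilde{\bx}}_{1}\bar{\epsilon}_{1}]=T^{-2}\sum_{s,t=1}^{T}\E[\tilde{\bx}_{1s}\epsilon_{1t}]=T^{-1}\sum_{|k|\le T-1}(1-|k|/T)\E[\tilde{\bx}_{1,1}\epsilon_{1,1+k}]=T^{-1}B_{T}$. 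By (A2) the $k=0$ term of $\sum_{k}\|\E[\tilde{\bx}_{1,1}\epsilon_{1,1+k}]\|$ is finite, and the tail is summable by Lemma \ref{lem1}, so $\|B_{T}\|$ is bounded uniformly and $\E[\hat{S}_{2}]=O(T^{-1})$. The stochastic parts of (i) and (iii) rest on uniform-in-$T$ moment bounds from Yokoyama's inequality (Theorem \ref{thmA2}) applied coordinatewise to the conditional process given $c_{1}$, whose mixing coefficients are dominated by $\alpha(k)$ and whose $(8+\delta)$-moments are bounded by (A2): with $r=4$ — the admissible order being exactly what (A1) permits, since $\sum_{k}(k+1)\alpha(k)^{\delta/(8+\delta)}<\infty$ — one gets $\E[|T^{-1/2}\sum_{t}\tilde{\bx}_{1t}^{a}|^{4}\mid c_{1}]\le C$ and $\E[|T^{-1/2}\sum_{t}\epsilon_{1t}|^{4}\mid c_{1}]\le C$, hence $\E[\|\bar{\tilde{\bx}}_{1}\|^{4}\mid c_{1}]\le CT^{-2}$ and $\E[\bar{\epsilon}_{1}^{4}\mid c_{1}]\le CT^{-2}$.

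From the conditional Cauchy--Schwarz inequality, $\E[\|\bar{\tilde{\bx}}_{1}\bar{\epsilon}_{1}\|^{2}]\le CT^{-2}$; since $\hat{S}_{2}-\E[\hat{S}_{2}]$ is an average of $n$ i.i.d.\ mean-zero terms, Markov's inequality yields $\hat{S}_{2}-\E[\hat{S}_{2}]=O_{\Pr}(n^{-1/2}T^{-1})$, which is (iii). For (i), write $\hat{A}=n^{-1}\sum_{i}\{T^{-1}\sum_{t}\tilde{\bx}_{it}\tilde{\bx}_{it}'-\bar{\tilde{\bx}}_{i}\bar{\tilde{\bx}}_{i}'\}$; Jensen's inequality bounds the second moment of each summand uniformly in $T$ by $\E[\|\tilde{\bx}_{1,1}\|^{4}]<\infty$, while $\E[\hat{A}]=A-T^{-1}D_{T}$ by the same computation as in (ii). Hence $\hat{A}=A-T^{-1}D_{T}+O_{\Pr}(n^{-1/2})$. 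Because $A$ is nonsingular by (A3), $\|D_{T}\|_{\op}$ is bounded (Lemma \ref{lem1}) so $\|T^{-1}A^{-1}D_{T}\|_{\op}\to 0$, and matrix inversion is stable near a nonsingular matrix, $\hat{A}$ is nonsingular with probability approaching one; combining stability with the Neumann expansion $(A-T^{-1}D_{T})^{-1}=A^{-1}\sum_{m\ge 0}T^{-m}(D_{T}A^{-1})^{m}$ gives $\hat{A}^{-1}=A^{-1}\sum_{m\ge 0}T^{-m}(D_{T}A^{-1})^{m}+O_{\Pr}(n^{-1/2})$, where one checks that an $O_{\Pr}(n^{-1/2})$ perturbation of $A-T^{-1}D_{T}$ moves each term of the (geometrically) convergent series by $O_{\Pr}(n^{-1/2})$ and the series of remainders still converges.

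For (iv), conditioning on $\{c_{i}\}_{i=1}^{n}$ the $i$-th block depends only on $c_{i}$, so $\E[\hat{S}_{1}\mid\{c_{i}\}]=n^{-1}\sum_{i}\E[\tilde{\bx}_{i1}\epsilon_{i1}\mid c_{i}]$ and $\sqrt{nT}(\hat{S}_{1}-\E[\hat{S}_{1}\mid\{c_{i}\}])=n^{-1/2}\sum_{i=1}^{n}\eta_{ni}$ with $\eta_{ni}=T^{-1/2}\sum_{t=1}^{T}(\tilde{\bx}_{it}\epsilon_{it}-\E[\tilde{\bx}_{i1}\epsilon_{i1}\mid c_{i}])$ i.i.d.\ across $i$ and mean zero. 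This is a triangular-array CLT handled via the Lyapunov/Cram\'er--Wold route: first, $\Var(\eta_{n1})=\sum_{|k|\le T-1}(1-|k|/T)\E[(\tilde{\bx}_{1,1}\epsilon_{1,1}-\E[\cdot\mid c_{1}])(\tilde{\bx}_{1,1+k}\epsilon_{1,1+k}-\E[\cdot\mid c_{1}])']\to V_{1}$, the absolute convergence of $\sum_{k}\|\E[\cdot]\|_{\op}$ (hence dominated convergence) following by applying Davidov's inequality (Theorem \ref{thmA1}) to the conditional process $\{\tilde{\bx}_{1t}\epsilon_{1t}\}$ and taking expectation over $c_{1}$; second, the Lyapunov condition with exponent $4$ holds because $\E[\|\eta_{n1}\|^{4}]$ is bounded uniformly in $T$ — again Yokoyama with $r=4$, using that $\tilde{\bx}_{1t}\epsilon_{1t}$ has uniformly bounded conditional $(4+\delta/2)$-moments by Cauchy--Schwarz from (A2) and that $\sum_{k}(k+1)\alpha(k)^{\delta/(8+\delta)}<\infty$ by (A1). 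Then $n^{-1/2}\sum_{i}\bm{r}'\eta_{ni}\stackrel{d}{\to}N(0,\bm{r}'V_{1}\bm{r})$ for each fixed $\bm{r}$, and the Cram\'er--Wold device finishes.

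I expect the substantive difficulty to be concentrated in (iv): the bookkeeping that the single mixing hypothesis $\sum_{k}k\alpha(k)^{\delta/(8+\delta)}<\infty$ of (A1) simultaneously delivers Davidov's covariance decay for $\tilde{\bx}_{it}\epsilon_{it}$ (for $\Var(\eta_{n1})\to V_{1}$) and Yokoyama's $r=4$ moment bound for the same process (for the Lyapunov condition), with all constants uniform in $c_{i}$ and in $T=T_{n}$; once these uniform bounds are in hand, the remaining pieces are routine via Jensen, Markov, and the Neumann series.
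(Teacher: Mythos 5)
Your proof is correct and follows essentially the same route as the paper: Davidov/Yokoyama moment bounds conditional on $c_{1}$ for the time-series sums, i.i.d.-across-$i$ variance calculations, the Neumann series for $(A-T^{-1}D_{T})^{-1}$, and a Lyapunov/Cram\'er--Wold CLT for (iv). The only differences are cosmetic: in (i) you bound $\hat{A}-\E[\hat{A}]$ crudely as $O_{\Pr}(n^{-1/2})$ via Jensen rather than the paper's finer three-term decomposition (which yields rates $(nT)^{-1/2}$, $n^{-1/2}$, $n^{-1/2}T^{-1}$ and is reused later in the bootstrap lemma), and in (iv) you verify Lyapunov with fourth moments where the paper uses third; both suffice for the stated lemma.
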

\begin{proof}[Proof of Lemma \ref{lem2}]
{\bf Part (i)}: Recall that $A=\E[\tilde{\bx}_{1,1} \tilde{\bx}_{1,1}']$. Observe that $T^{-1}D_{T} = \E[ \bar{\tilde{\bx}}_{1}\bar{\tilde{\bx}}_{1}']$ and
\begin{align*}
\hat{A} - A + T^{-1} D_{T} &= \frac{1}{nT} \sum_{i=1}^{n} \sum_{t=1}^{T} (\tilde{\bx}_{it} \tilde{\bx}_{it}' - \E[\tilde{\bx}_{i1} \tilde{\bx}_{i1}' \mid c_{i}]) \\
&\quad +\frac{1}{n} \sum_{i=1}^{n} (\E[\tilde{\bx}_{i1} \tilde{\bx}_{i1}' \mid c_{i}] - \E[\tilde{\bx}_{1,1} \tilde{\bx}_{1,1}']) - \frac{1}{n} \sum_{i=1}^{n} (\bar{\tilde{\bx}}_{i}\bar{\tilde{\bx}}_{i}' - \E[ \bar{\tilde{\bx}}_{1}\bar{\tilde{\bx}}_{1}']) \\
&=: \hat{D}_{1} + \hat{D}_{2} - \hat{D}_{3}.
\end{align*}
Fix any $1 \leq a,b \leq p$. We wish to show that $\hat{D}_{1}^{ab} = O_{\Pr}\{  (nT)^{-1/2} \}$, $\hat{D}_{2}^{ab} = O_{\Pr}(n^{-1/2})$ and $\hat{D}_{3}^{ab} = O_{\Pr} (n^{-1/2}T^{-1})$. We make use of Theorems \ref{thmA1} and \ref{thmA2}. Put $\xi_{it} = \tilde{x}_{it}^{a} \tilde{x}_{it}^{b} - \E[ \tilde{x}_{i1}^{a} \tilde{x}_{i1}^{b} \mid c_{i} ]$. Then $\hat{D}_{1}^{ab} = (nT)^{-1} \sum_{i=1}^{n}\sum_{t=1}^{T} \xi_{it}$ and 
\[
\Var (\hat{D}_{1}^{ab}) = n^{-2} T^{-2} \sum_{i=1}^{n} \E[  (\sum_{t=1}^{T} \xi_{it} )^{2} ] = n^{-1}T^{-2}  \E[  (\sum_{t=1}^{T} \xi_{1t} )^{2} ] = n^{-1} T^{-2} \E[\E[(\sum_{t=1}^{T} \xi_{1t} )^{2} \mid c_{1}]].
\]
Using Theorem \ref{thmA1} to bound $\E[(\sum_{t=1}^{T} \xi_{1t} )^{2} \mid c_{1}]$, we have $\Var (\hat{D}_{1}^{ab}) = O\{ (nT)^{-1} \}$, which implies that $\hat{D}_{1}^{ab} = O_{\Pr}\{  (nT)^{-1/2} \}$.
The fact that $\hat{D}_{2}^{ab} = O_{\Pr}(n^{-1/2})$ is deduced from a direct evaluation of the variance. 
We next show that $\hat{D}_{3}^{ab} = O_{\Pr}(n^{-1/2} T^{-1})$. By the cross section independence and the Cauchy-Schwarz inequality, we have
\begin{equation*}
\E[| \hat{D}_{3}^{ab} |^{2}] =  \frac{1}{n^{2}} \sum_{i=1}^{n} \Var (\bar{\tilde{x}}_{i}^{a} \bar{\tilde{x}}_{i}^{b}) = n^{-1} \Var (\bar{\tilde{x}}_{1}^{a} \bar{\tilde{x}}_{1}^{b}) \leq n^{-1}  (\E[ \E[ (\bar{\tilde{x}}_{1}^{a})^{4} \mid c_{1}]])^{1/2} (\E[ \E[ (\bar{\tilde{x}}_{1}^{b})^{4} \mid c_{1}] ])^{1/2}.
\end{equation*}
Using Theorem \ref{thmA2} to bound $\E[ (\bar{\tilde{x}}_{1}^{a})^{4} \mid c_{1}]$ and $\E[ (\bar{\tilde{x}}_{1}^{b})^{4} \mid c_{1}]$, we have $\E[| \hat{D}_{2}^{ab} |^{2}] = O(n^{-1}T^{-2})$, which implies that $\hat{D}_{2}^{ab} = O_{\Pr}(n^{-1/2}T^{-1})$.
Therefore, we have $\hat{A} = A - T^{-1} D_{T} + O_{\Pr}(n^{-1/2})$. 
By condition (A3), there exists a constant $\rho > 0$ such that $\lambda_{\min}(A) \geq \rho$.
By Lemma \ref{lem1},  $\lambda_{\min}(\hat{A}) \geq \lambda_{\min}(A) - T^{-1} \| D_{T} \|_{\op} - O_{\Pr}(n^{-1/2}) = \rho - o_{\Pr}(1)$, which implies that $\hat{A}$ is nonsingular with probability approaching one.

We wish to obtain the expansion of $\hat{A}$. By condition (A3) and Lemma \ref{lem1}, $A - T^{-1} D_{T}$ is nonsingular for large $n$, and 
$\| (A - T^{-1} D_{T})^{-1} \|_{\op} \leq (1+o(1)) \rho^{-1}$. 
Put $\hat{R} = (\hat{A} - A + T^{-1} D_{T})(A - T^{-1} D_{T})^{-1}$ so that $\hat{A} = (I + \hat{R})(A - T^{-1} D_{T})$. 
Since $\hat{R} = O_{\Pr}(n^{-1/2})$, applying the Taylor expansion to the term $(I+\hat{R})^{-1}$, we have 
\begin{align*}
\hat{A}^{-1} &= (A - T^{-1} D_{T})^{-1} (I + O_{\Pr}(\hat{R}) ) = (A - T^{-1} D_{T})^{-1} (I + O_{\Pr} (n^{-1/2}) ) \\
&= A^{-1} (I  - T^{-1} D_{T}A^{-1})^{-1} (I + O_{\Pr} (n^{-1/2}) ).
\end{align*}
Since $\| T^{-1} D_{T}A^{-1} \|_{\op} \leq T^{-1} \| D_{T} \|_{\op} \| A^{-1} \|_{\op} = O(T^{-1})$, applying the Taylor expansion to the term $(I  - T^{-1} D_{T}A^{-1})^{-1}$, we have 
\begin{equation*}
(I  - T^{-1} D_{T}A^{-1})^{-1} = \sum_{m=0}^{\infty} T^{-m} (D_{T}A^{-1})^{m}, \ \text{(Neumann series})
\end{equation*}
where the right side is absolutely convergent in $\| \cdot \|_{\op}$ for large $n$.

{\bf Part (ii)}: This follows from a direct calculation and Lemma \ref{lem1}. 

{\bf Part (iii)}: Fix any $1 \leq a \leq p$. It suffices to show that $\Var (\hat{S}^{a}_{2}) = O( n^{-1} T^{-2} )$. By  the cross section independence and the  Cauchy-Schwarz inequality, we have 
\begin{equation*}
\Var (\hat{S}^{a}_{2}) \leq \frac{1}{n^{2}} \sum_{i=1}^{n} \E[ (\bar{\tilde{x}}_{i}^{a} \bar{\epsilon}_{i})^{2} ] = n^{-1} \E[ (\bar{\tilde{x}}_{1}^{a} \bar{\epsilon}_{1})^{2} ] \leq n^{-1}  (\E[ \E[ (\bar{\tilde{x}}_{1}^{a})^{4} \mid c_{1}]])^{1/2} (\E[ \E[ (\bar{\epsilon}_{1})^{4} \mid c_{1}] ])^{1/2}.
\end{equation*}
Using Theorem \ref{thmA2} to bound $\E[ (\bar{\tilde{x}}_{1}^{a})^{4} \mid c_{1}]$ and $\E[ (\bar{\epsilon}_{1})^{4} \mid c_{1}]$, we have $\Var (\hat{S}^{a}_{2}) = O(n^{-1}T^{-2})$. 

{\bf Part (iv)}: The fact that $V_{1}$ is absolutely convergent is deduced from Theorem \ref{thmA1}. We wish to show the asymptotic normality. 
By the Cram\'{e}r-Wald device, it suffices to show that for any fixed $\bm{r} \in \mathbb{R}^{p}$, $\sqrt{nT} \bm{r}'(\hat{S}_{1} - \E[ \hat{S}_{1} \mid \{ c_{i} \}_{i=1}^{n}]) \stackrel{d}{\to} N(0,\bm{r}'V_{1}\bm{r})$. 
Define $u_{ni} = T^{-1/2} \sum_{t=1}^{T} \bm{r}'(\tilde{\bx}_{it} \epsilon_{it} - \E[ \tilde{\bx}_{it} \epsilon_{it} \mid c_{i}])$. Then,
\begin{equation*}
\sqrt{nT} \bm{r}'(\hat{S}_{1} - \E[ \hat{S}_{1} \mid \{ c_{i} \}_{i=1}^{n}]) = \frac{1}{\sqrt{n}} \sum_{i=1}^{n} u_{ni}.
\end{equation*}
Since $u_{n1},\dots,u_{nn}$ are i.i.d., we can apply the Lyapunov central limit theorem to the right sum. Since $\Var  (u_{n1})  \to \bm{r}'V_{1}\bm{r}$, it suffices to show that $\E[ | u_{n1} |^{3} ] = o(n^{1/2})$. Here, using Theorem \ref{thmA2} to bound $\E[ | u_{n1} |^{3} \mid c_{1} ]$, 
we have $\E[ | u_{n1} |^{3} ] = \E[ \E[ | u_{n1} |^{3} \mid c_{1} ] ] = O(1) = o(n^{1/2})$. Therefore, we obtain the desired result. 
\end{proof}

We are now in position to prove Proposition \ref{prop1}. 

\begin{proof}[Proof of Proposition \ref{prop1}]
The expansion (\ref{Bahadur}) follows from Lemma \ref{lem2} (note that we use the fact that $\hat{S}_{1}=O_{\Pr}(d_{nT}^{-1/2})$). 
Suppose that $\E[ \tilde{\bx}_{it} \epsilon_{it} \mid c_{i} ] = \bm{0}$ a.s. Then 
$\hat{S}_{1} = \hat{S}_{1} - \E[ \hat{S}_{1} \mid \{ c_{i} \}_{i=1}^{n}]$, so that by Lemma \ref{lem2} (iv), 
$\sqrt{nT} \hat{S}_{1} \stackrel{d}{\to} N(\bm{0},V_{1})$. Suppose now that $\E[ \tilde{\bx}_{it} \epsilon_{it} \mid c_{i} ] \neq \bm{0}$ with  positive probability.  
Then we have 
\begin{align*}
\sqrt{n} \hat{S}_{1} &= \sqrt{n} \E[ \hat{S}_{1} \mid \{ c_{i} \}_{i=1}^{n} ] + \sqrt{n} (\hat{S}_{1}-\E[\hat{S} \mid \{ c_{i} \}_{i=1}^{n}] ) \\
&=\sqrt{n} \E[\hat{S}_{1} \mid \{ c_{i} \}_{i=1}^{n}] + O_{\Pr}(T^{-1/2}) \\
&\stackrel{d}{\to} N(\bm{0},V_{2}),
\end{align*}
where $V_{2} = \E[ \E[ \tilde{\bx}_{1,1} \epsilon_{1,1} \mid c_{1}]^{\otimes 2}]$. The asymptotic normality follows from the fact that $\Sigma = V_{1}$ if $\E[ \tilde{\bx}_{it} \epsilon_{it} \mid c_{i} ] = \bm{0}$ a.s. and $\Sigma = V_{2}$ otherwise. 
\end{proof}

\subsection{Proof of Corollary \ref{cor1}}

By Lemma \ref{lem1}, we have $B_{T} = \sum_{|k| \leq T-1}  \E[\tilde{\bx}_{1,1} \epsilon_{1,1+k}]  + O(T^{-1})$.
Observe that
\begin{align*}
\sum_{|k| \geq T} \|  \E[\tilde{\bx}_{1,1} \epsilon_{1,1+k}] \| &=  \sum_{|k| \geq T} |k|^{-1} \cdot |k|\|  \E[\tilde{\bx}_{1,1} \epsilon_{1,1+k}] \| \\
&\leq \frac{1}{T}  \sum_{|k| \geq T} |k| \| \E[\tilde{\bx}_{1,1} \epsilon_{1,1+k}] \| \\
&\leq \frac{1}{T}  \sum_{k=-\infty}^{\infty} |k| \| \E[\tilde{\bx}_{1,1} \epsilon_{1,1+k}] \| = O(T^{-1}).
\end{align*}
Therefore, we have $B_{T} = B + O(T^{-1})$, which implies the desired result. \qed

\section{Proofs for Section \ref{sec: inference} and Appendix \ref{Appendix}}
\label{Appendix B}

\subsection{Proof of Proposition \ref{prop2}}

Since $\hat{\epsilon}_{it}=-(\tilde{\bx}_{it}-\bar{\tilde{\bx}}_{i})'(\tilde{\bbeta}-\bbeta_{0}) + \epsilon_{it} - \bar{\epsilon}_{i}$, 
we have 
\begin{equation*}
\hat{\Sigma}_{nT} = \tilde{\Sigma}_{nT} + \frac{1}{n^{2}} \sum_{i=1}^{n} (-R_{1i} - R_{2i} - R_{1i}' - R_{2i}'+ R_{3i} + R_{3i}' + R_{4i} + R_{5i}),
\end{equation*}
where
\begin{align*} 
\tilde{\Sigma}_{nT} &= n^{-2} {\textstyle \sum}_{i=1}^{n}   ( T^{-1}  {\textstyle \sum}_{t=1}^{T} \tilde{\bx}_{it} \epsilon_{it} )^{\otimes 2}, \
R_{1i} = (T^{-1}  {\textstyle \sum}_{t=1}^{T} \tilde{\bx}_{it} \epsilon_{it}  ) (\bar{\tilde{\bx}}_{i} \bar{\epsilon}_{i})', \\
R_{2i} &=  ( T^{-1}  {\textstyle \sum}_{t=1}^{T} \tilde{\bx}_{it} \epsilon_{it} )\{ T^{-1} {\textstyle \sum}_{t=1}^{T} (\tilde{\bx}_{it} - \bar{\tilde{\bx}}_{i})^{\otimes 2} (\tilde{\bbeta}-\bbeta_{0}) \}', \\
R_{3i} &= (\bar{\tilde{\bx}}_{i} \bar{\epsilon}_{i}) \{ T^{-1} {\textstyle \sum}_{t=1}^{T} (\tilde{\bx}_{it} - \bar{\tilde{\bx}}_{i})^{\otimes 2} (\tilde{\bbeta}-\bbeta_{0}) \}',  \
R_{4i} = (\bar{\tilde{\bx}}_{i} \bar{\epsilon}_{i})^{\otimes 2}, \\
R_{5i} &= \{ T^{-1}  {\textstyle \sum}_{t=1}^{T} (\tilde{\bx}_{it} - \bar{\tilde{\bx}}_{i})^{\otimes 2} (\tilde{\bbeta}-\bbeta_{0}) \}^{\otimes 2}. 
\end{align*}
By definition, we have 
\begin{align*}
\| R_{1i} \|_{\op} &\leq \| T^{-1} {\textstyle \sum}_{t=1}^{T} \tilde{\bx}_{it} \epsilon_{it} \| \| \bar{\tilde{\bx}}_{i} \bar{\epsilon}_{i} \|, \\
\| R_{2i} \|_{\op} &\leq \| T^{-1} {\textstyle \sum}_{t=1}^{T} \tilde{\bx}_{it} \epsilon_{it} \| \| T^{-1} {\textstyle \sum}_{t=1}^{T} (\tilde{\bx}_{it} - \bar{\tilde{\bx}}_{i})^{\otimes 2} \|_{\op} \| \tilde{\bbeta} - \bbeta_{0} \| \\ 
&\leq  \| T^{-1} {\textstyle \sum}_{t=1}^{T} \tilde{\bx}_{it} \epsilon_{it} \| \| T^{-1} {\textstyle \sum}_{t=1}^{T} \tilde{\bx}_{it} \tilde{\bx}_{it}' \|_{\op} \| \tilde{\bbeta} - \bbeta_{0} \|, \\
\| R_{3i} \|_{\op} &\leq \| \bar{\tilde{\bx}}_{i} \bar{\epsilon}_{i} \| \| T^{-1} {\textstyle \sum}_{t=1}^{T} \tilde{\bx}_{it} \tilde{\bx}_{it}' \|_{\op} \| \tilde{\bbeta} - \bbeta_{0} \|, \ \| R_{4i} \|_{\op} \leq \| \bar{\tilde{\bx}}_{i} \bar{\epsilon}_{i} \|^{2}, \\
\| R_{5i} \|_{\op} &\leq \| T^{-1} {\textstyle \sum}_{t=1}^{T} (\tilde{\bx}_{it} - \bar{\tilde{\bx}}_{i})^{\otimes 2} \|^{2}_{\op} \| \tilde{\bbeta} - \bbeta_{0} \|^{2} \\
&\leq  \| T^{-1} {\textstyle \sum}_{t=1}^{T} \tilde{\bx}_{it} \tilde{\bx}_{it}' \|^{2}_{\op} \| \tilde{\bbeta} - \bbeta_{0} \|^{2}.
\end{align*}
By Theorems \ref{thmA1} and \ref{thmA2}, we can show that $\E[ \| T^{-1} \sum_{t=1}^{T} \tilde{\bx}_{it} \epsilon_{it} \|^{2}] \leq C nd_{nT}^{-1}$, 
$\E[  \| \bar{\tilde{\bx}}_{i} \bar{\epsilon}_{i} \|^{2} ] \leq (\E[ \| \bar{\tilde{\bx}}_{i} \|^{4}])^{1/2} (\E[ \bar{\epsilon}_{i}^{4}])^{1/2} \leq C T^{-2}$ and $\E[ \| T^{-1} \sum_{t=1}^{T} \tilde{\bx}_{it} \tilde{\bx}_{it}' \|_{\op}^{2}] \leq C$ for some constant $C > 0$. 
Therefore, we have 
\[
\| \hat{\Sigma}_{nT} - \tilde{\Sigma}_{nT} \|_{\op}  
=O_{\Pr}[ \max \{ n^{-1/2}T^{-1} d_{nT}^{-1/2}, n^{-1/2} d_{nT}^{-1} \}].
\]
In what follows, we wish to show that $\tilde{\Sigma}_{nT} = \Sigma_{nT} + O_{\Pr}(n^{-1/2} d_{nT}^{-1})$. Fix any $1 \leq a,b \leq p$. By definition, we have 
\begin{align*}
\tilde{\Sigma}_{nT}^{ab} &= \frac{1}{n^{2}} \sum_{i=1}^{n} \left ( \frac{1}{T^{2}}\sum_{s=1}^{T} \sum_{t=1}^{T} \tilde{x}_{is}^{a} \epsilon_{is} \tilde{x}_{it}^{b} \epsilon_{it} \right ) \\
&=\frac{1}{n^{2}} \sum_{i=1}^{n} \left \{ \frac{1}{T^{2}}\sum_{s=1}^{T} \sum_{t=1}^{T} (\tilde{x}_{is}^{a} \epsilon_{is} - \E[\tilde{x}_{i1}^{a} \epsilon_{i1} \mid c_{i}]) ( \tilde{x}_{it}^{b} \epsilon_{it} - \E[ \tilde{x}_{i1}^{b} \epsilon_{i1} \mid c_{i}]) \right \} \\
&\quad + \frac{1}{n^{2}} \sum_{i=1}^{n} \E[\tilde{x}_{i1}^{a} \epsilon_{i1} \mid c_{i}] \left \{ \frac{1}{T} \sum_{t=1}^{T} (\tilde{x}_{it}^{b} \epsilon_{it} -\E[\tilde{x}_{i1}^{b} \epsilon_{i1} \mid c_{i}]) \right \} \\
&\quad + \frac{1}{n^{2}} \sum_{i=1}^{n} \E[\tilde{x}_{i1}^{b} \epsilon_{i1} \mid c_{i}] \left \{  \frac{1}{T} \sum_{t=1}^{T} (\tilde{x}_{it}^{a} \epsilon_{it}-\E[\tilde{x}_{i1}^{a} \epsilon_{i1} \mid c_{i}]) \right \} \\
&\quad -  \frac{1}{n^{2}} \sum_{i=1}^{n} \E[\tilde{x}_{i1}^{a}\epsilon_{i1} \mid c_{i}] \E[\tilde{x}_{i1}^{b}\epsilon_{i1} \mid c_{i}] \\
&=:(I) + (II) + (III) - (IV). 
\end{align*}
It suffices to show that the variance of each term in (I)-(IV) is $O(n^{-1} d_{nT}^{-2})$. Letting $\xi_{it} = \tilde{x}_{it}^{a} \epsilon_{it} - \E[\tilde{x}_{i1}^{a} \epsilon_{i1} \mid c_{i}]$ and $\eta_{it} = \tilde{x}_{it}^{b} \epsilon_{it} - \E[\tilde{x}_{i1}^{b} \epsilon_{i1} \mid c_{i}]$, we have 
\begin{align*}
\Var (I) &\leq \frac{1}{n^{4}T^{4} } \sum_{i=1}^{n}\sum_{s=1}^{T} \sum_{t=1}^{T} \sum_{u=1}^{T} \sum_{v=1}^{T} \E[\xi_{is}\eta_{it}\xi_{iu}\eta_{iv}] \\
&= \frac{1}{n^{3}T^{2}} \left ( \frac{1}{T^{2}} \sum_{s=1}^{T} \sum_{t=1}^{T} \sum_{u=1}^{T} \sum_{v=1}^{T} \E[\xi_{1s}\eta_{1t}\xi_{1u}\eta_{1v}] \right ).
\end{align*}
Using the same argument as in the proof of \citet[][Theorem 1]{Y80}, we can show  that the parenthesis on the right side is $O(1)$.\footnote{It suffices to show that $\sum_{s=1}^{T} \sum_{t=1}^{T} \sum_{u=1}^{T} \sum_{v=1}^{T} |\E[\xi_{1s} \eta_{1t} \xi_{1u} \eta_{1v} \mid c_{1} ]| \leq C T^{2}$ for some constant $C$. When $a=b$, so that $\xi_{it} = \eta_{it}$, the assertion directly follows from the proof of \citet[][Theorem 1]{Y80} with $r=4$. The proof for the $a \neq b$ case is almost the same as that for the $a=b$ case. } Therefore, we have $\Var (I) = O(n^{-3}T^{-2})$. If $\E[ \tilde{\bx}_{it} \epsilon_{it} \mid c_{i} ] = \bm{0}$ a.s., (II)-(IV) are zeros a.s., so that 
$\Var (\tilde{\Sigma}^{ab}) = O(n^{-3}T^{-2}) = O(n^{-1} d_{nT}^{-2})$. Suppose now that $\E[ \tilde{\bx}_{it} \epsilon_{it} \mid c_{i} ] \neq \bm{0}$ with  positive probability. 
Clearly, $\Var (IV) = O(n^{-3})$. Likewise, we have 
\begin{equation*}
\Var (II) \leq \frac{1}{n^{3}T}  \E \left [ \E[\tilde{x}_{1,1}^{a} \epsilon_{1,1} \mid c_{1}]^{2}  \E \left [  \left ( \frac{1}{\sqrt{T}} \sum_{t=1}^{T} \eta_{1t} \right )^{2} \mid c_{1} \right ]  \right ].
\end{equation*}
By Theorem \ref{thmA1}, we have $\E[(T^{-1/2} \sum_{t=1}^{T} \eta_{1t})^{2} \mid c_{1}] \leq C$ a.s. for some constant $C > 0$, so that $\Var (II) = O(n^{-3}T^{-1})$. Similarly, we have $\Var (III) = O(n^{-3}T^{-1})$. Therefore, we have $\Var (\tilde{\Sigma}_{nT}^{ab}) = O(n^{-3}) = O(n^{-1} d_{nT}^{-2})$. 

\subsection{Proof of Proposition \ref{prop3}}
We first note a lemma on a relation between stochastic orders. 

\begin{lemma}[\cite{CH10}, Lemma 3]
\label{lemB1}
Given a vector valued statistic $\Delta_{n}$ depending on both $c_{1},\dots,c_{n},\bm{z}_{1},\dots,\bm{z}_{n}$ and $w_{n1},\dots,w_{nn}$,  and a deterministic sequence $a_{n} > 0$, we have:
\begin{equation*}
\Delta_{n} = O_{\Pr_{W}}(a_{n}) \ \text{in probability} \  \Leftrightarrow \ \Delta_{n} = O_{\Pr}(a_{n}) \ (\text{unconditionally}).
\end{equation*}
\end{lemma}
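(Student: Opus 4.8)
The plan is to exploit the structure of the joint probability space: $\Pr$ is the joint law over both the data $\mathcal{D} = (c_{1},\dots,c_{n},\bm{z}_{1},\dots,\bm{z}_{n})$ and the bootstrap weights, whereas $\Pr_{W}$ is the conditional law given $\mathcal{D}$. These two measures are tied together by the tower property, and this single link drives both implications. Writing $U_{n} = a_{n}^{-1}\|\Delta_{n}\|$ and, for a constant $C>0$, setting $g_{n}(C) = \Pr_{W}(U_{n} \geq C)$ --- a $[0,1]$-valued random variable depending on $\mathcal{D}$ alone --- the identity I would use repeatedly is
\[
\Pr(U_{n} \geq C) = \E[\Pr_{W}(U_{n} \geq C)] = \E[g_{n}(C)].
\]

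For the forward implication (conditional $\Rightarrow$ unconditional), I would fix $\eta>0$, apply the conditional hypothesis with $\delta = \eta/2$ and tolerance $\eta/2$ to extract a $C$ satisfying $\Pr(g_{n}(C) > \eta/2) \leq \eta/2$ for all $n$, and then split the expectation according to whether $g_{n}(C)$ exceeds $\eta/2$. Using $g_{n}(C) \leq 1$ on the large part,
\[
\Pr(U_{n} \geq C) = \E[g_{n}(C)\mathbf{1}\{g_{n}(C) > \eta/2\}] + \E[g_{n}(C)\mathbf{1}\{g_{n}(C)\leq \eta/2\}] \leq \tfrac{\eta}{2} + \tfrac{\eta}{2} = \eta,
\]
which is precisely the unconditional bound, uniformly in $n$.

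For the converse (unconditional $\Rightarrow$ conditional), I would fix $\delta, \eta>0$ and apply Markov's inequality to the nonnegative variable $g_{n}(C)$ on the data space, giving $\Pr(g_{n}(C) > \delta) \leq \delta^{-1}\E[g_{n}(C)] = \delta^{-1}\Pr(U_{n} \geq C)$. Invoking the unconditional hypothesis with tolerance $\delta\eta$ yields a $C$ with $\Pr(U_{n} \geq C) \leq \delta\eta$ for all $n$, hence $\Pr(g_{n}(C) > \delta) \leq \eta$, which is exactly the conditional statement ``$\Delta_{n} = O_{\Pr_{W}}(a_{n})$ in probability''.

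I do not expect a genuine obstacle here: the whole argument is a routine combination of the law of total probability and Markov's inequality. The only point needing care is the bookkeeping of the two measures --- ensuring that $\Pr$ is read as the joint law so that $g_{n}(C)$ integrates against the marginal of $\mathcal{D}$ --- together with maintaining the uniformity in $n$ that both definitions require, which is automatic since every constant $C$ is selected uniformly over $n$.
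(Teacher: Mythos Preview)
Your argument is correct. Note, however, that the paper does not give its own proof of this lemma: it is quoted directly as Lemma~3 of \cite{CH10} and used as a black box. Your write-up is the standard proof via the tower property and Markov's inequality, and it matches the definitions of ``$O_{\Pr_{W}}(a_{n})$ in probability'' and ``$O_{\Pr}(a_{n})$'' exactly as the paper states them (including the uniformity in $n$).
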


By Lemma \ref{lemB1}, it suffices to evaluate the remainder term in the expansion (\ref{BahadurB}) unconditionally since it is translated to the evaluation under the conditional probability without changing rates.

We start to prove Proposition \ref{prop3}. Define 

\begin{align*}
\hat{S}_{1}^{*} = \frac{1}{nT} \sum_{i=1}^{n} w_{ni} \sum_{t=1}^{T} \tilde{\bx}_{it}\epsilon_{it}, \ \hat{S}_{2}^{*} = \frac{1}{n} \sum_{i=1}^{n} w_{ni}\bar{\tilde{\bx}}_{i}\bar{\epsilon}_{i}.
\end{align*}
By definition, we have $\hat{\bbeta}^{*} - \bbeta_{0} = (\hat{A}^{*})^{-1} (\hat{S}^{*}_{1} - \hat{S}^{*}_{2})$. Also define $\hat{S}_{1}$ and $\hat{S}_{2}$ as in the proof of Proposition \ref{prop1}. 
We prepare some technical lemmas. 

\begin{lemma}
\label{lem3}
Suppose that conditions (A1)-(A3) are satisfied. 
As $n \to \infty$ (and automatically $T=T_{n} \to \infty$), we have: (i) $\hat{A}^{*}$ is nonsingular with probability approaching one, and $(\hat{A}^{*})^{-1} = A^{-1} \sum_{m=0}^{\infty} T^{-m} (D_{T}A^{-1})^{m} + O_{\Pr} (n^{-1/2})$; 
(ii)  $\E[\hat{S}^{*}_{2}]  = T^{-1}B_{T} = O(T^{-1})$; (iii) $\hat{S}^{*}_{2} - \E[ \hat{S}^{*}_{2}] = O_{\Pr} ( n^{-1/2} T^{-1} )$.
\end{lemma}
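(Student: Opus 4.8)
The plan is to mirror the proof of Lemma \ref{lem2} step by step, exploiting that the multinomial weights $w_{n1},\dots,w_{nn}$ are independent of the data and satisfy $\E_{\Pr_{W}}[w_{ni}]=1$, $\Var_{\Pr_{W}}(w_{ni})=1-n^{-1}\leq 1$, and $\Cov_{\Pr_{W}}(w_{ni},w_{nj})=-n^{-1}$ for $i\neq j$. By Lemma \ref{lemB1}, every stochastic-order bound below may be established \emph{unconditionally} (over the joint law of the data and the weights), which also legitimizes the $O_{\Pr}$ notation in the statement. Throughout I would use the identity $\bx_{it}-\bar{\bx}_{i}=\tilde{\bx}_{it}-\bar{\tilde{\bx}}_{i}$, so that
\begin{equation*}
\hat{A}^{*}=\frac{1}{nT}\sum_{i=1}^{n}w_{ni}\sum_{t=1}^{T}\tilde{\bx}_{it}\tilde{\bx}_{it}'-\frac{1}{n}\sum_{i=1}^{n}w_{ni}\bar{\tilde{\bx}}_{i}\bar{\tilde{\bx}}_{i}'.
\end{equation*}

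\textbf{Part (ii)} is immediate: by independence of the weights from the data, $\E[\hat{S}_{2}^{*}]=n^{-1}\sum_{i=1}^{n}\E[\bar{\tilde{\bx}}_{i}\bar{\epsilon}_{i}]=\E[\bar{\tilde{\bx}}_{1}\bar{\epsilon}_{1}]=\E[\hat{S}_{2}]=T^{-1}B_{T}=O(T^{-1})$ by Lemma \ref{lem2}(ii) and Lemma \ref{lem1}. For \textbf{Part (iii)}, writing $\zeta_{i}=\bar{\tilde{\bx}}_{i}\bar{\epsilon}_{i}$, I would split
\begin{equation*}
\hat{S}_{2}^{*}-\E[\hat{S}_{2}^{*}]=\frac{1}{n}\sum_{i=1}^{n}(w_{ni}-1)\zeta_{i}+\bigl(\hat{S}_{2}-\E[\hat{S}_{2}]\bigr),
\end{equation*}
where the second term is $O_{\Pr}(n^{-1/2}T^{-1})$ by Lemma \ref{lem2}(iii). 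For the first term, fixing a coordinate $a$, independence of the weights from the data together with the cross-section independence and the multinomial covariance structure gives
\begin{equation*}
\E\left[\left(\frac{1}{n}\sum_{i=1}^{n}(w_{ni}-1)\zeta_{i}^{a}\right)^{2}\right]=\frac{1-n^{-1}}{n}\E[(\zeta_{1}^{a})^{2}]-\frac{n-1}{n^{2}}\bigl(\E[\zeta_{1}^{a}]\bigr)^{2}\leq\frac{1}{n}\E[(\zeta_{1}^{a})^{2}],
\end{equation*}
and by the Cauchy--Schwarz inequality and Theorem \ref{thmA2} (exactly as in the bound on $\hat{D}_{3}$ in the proof of Lemma \ref{lem2}) $\E[(\zeta_{1}^{a})^{2}]=O(T^{-2})$, so the first term is $O_{\Pr}(n^{-1/2}T^{-1})$.

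\textbf{Part (i)} reduces to showing $\hat{A}^{*}=A-T^{-1}D_{T}+O_{\Pr}(n^{-1/2})$, since then the Neumann-series and Taylor-expansion argument from the proof of Lemma \ref{lem2}(i) — the $\lambda_{\min}$ lower bound yielding nonsingularity with probability approaching one, followed by $(A-T^{-1}D_{T})^{-1}=A^{-1}\sum_{m=0}^{\infty}T^{-m}(D_{T}A^{-1})^{m}$ — applies verbatim with $\hat{A}$ replaced by $\hat{A}^{*}$. Writing
\begin{equation*}
\hat{A}^{*}-\hat{A}=\frac{1}{nT}\sum_{i=1}^{n}(w_{ni}-1)\sum_{t=1}^{T}\tilde{\bx}_{it}\tilde{\bx}_{it}'-\frac{1}{n}\sum_{i=1}^{n}(w_{ni}-1)\bar{\tilde{\bx}}_{i}\bar{\tilde{\bx}}_{i}',
\end{equation*}
the second term is $O_{\Pr}(n^{-1/2}T^{-1})$ by the bound in Part (iii), while for the first term, with $\chi_{i}^{ab}=T^{-1}\sum_{t=1}^{T}\tilde{x}_{it}^{a}\tilde{x}_{it}^{b}$, the same second-moment computation gives $\E[(n^{-1}\sum_{i}(w_{ni}-1)\chi_{i}^{ab})^{2}]\leq n^{-1}\E[(\chi_{1}^{ab})^{2}]$, and by Jensen's inequality, the Cauchy--Schwarz inequality and condition (A2), $\E[(\chi_{1}^{ab})^{2}]=O(1)$; hence $\hat{A}^{*}-\hat{A}=O_{\Pr}(n^{-1/2})$, and combining with $\hat{A}=A-T^{-1}D_{T}+O_{\Pr}(n^{-1/2})$ established in the proof of Lemma \ref{lem2}(i) finishes the argument.

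There is no genuinely new difficulty here; the only thing requiring care is the bookkeeping with the bootstrap weights — computing the relevant second moments through the multinomial covariance $\Cov_{\Pr_{W}}(w_{ni},w_{nj})=-n^{-1}$, taking unconditional expectations, and recycling the moment and mixing bounds of Theorems \ref{thmA1} and \ref{thmA2} — plus the routine use of Lemma \ref{lemB1} to pass between the conditional $O_{\Pr_{W}}$ and the unconditional $O_{\Pr}$. Every remaining step transcribes the corresponding step in the proof of Lemma \ref{lem2}.
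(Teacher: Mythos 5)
Your proposal is correct and follows essentially the same route as the paper: the same decomposition of $\hat{A}^{*}-\hat{A}$ and $\hat{S}_{2}^{*}-\E[\hat{S}_{2}^{*}]$ into a weighted-centered term plus the original sample term, the same second-moment bounds recycled from Lemma \ref{lem2} via Theorems \ref{thmA1}--\ref{thmA2}, and the same Neumann-series step for $(\hat{A}^{*})^{-1}$. The only cosmetic difference is that you compute the unconditional second moments directly through the multinomial covariances, whereas the paper bounds the conditional moment $\E_{\Pr_{W}}[\{\sum_{i}(w_{ni}-1)a_{i}\}^{2}]\leq\sum_{i}a_{i}^{2}$ first and then takes the data expectation; the two computations are equivalent.
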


In the proof of Lemma \ref{lem3}, we use some elementary properties of multinomial distributions.
Recall that $(w_{n1},\dots,w_{nn})'$ is multinomially distributed with parameters $n$ and (probabilities) $n^{-1},\dots,n^{-1}$. 
Then $\E_{\Pr_{W}}[ w_{ni} ] = 1$ and, for any fixed $a_{1},\dots,a_{n}$, $\E_{\Pr_{W}}[ \{ \sum_{i=1}^{n} (w_{ni}-1) a_{i} \}^{2} ] = \sum_{i=1}^{n} a_{i}^{2} - n(n^{-1}\sum_{i=1}^{n} a_{i})^{2}$ (which can be directly deduced from the fact that $w_{n1},\dots,w_{nn}$ are nonparametric bootstrap weights).

\begin{proof}[Proof of Lemma \ref{lem3}]
We follow the notation used in Appendix \ref{Appendix A}. 

{\bf Part (i)}: By Lemma \ref{lem2} (i), it suffices to show that $\hat{A}^{*} - \hat{A} = O_{\Pr}( n^{-1/2})$. Decompose $\hat{A}^{*}$ as 
\begin{equation*}
\hat{A}^{*} = \frac{1}{nT} \sum_{i=1}^{n} w_{ni} \sum_{t=1}^{T} \tilde{\bx}_{it} \tilde{\bx}_{it}'  - \frac{1}{n} \sum_{i=1}^{n} w_{ni} \bar{\tilde{\bx}}_{i}\bar{\tilde{\bx}}_{i}' =: \hat{A}_{1}^{*} - \hat{A}_{2}^{*}.
\end{equation*}
Similarly, decompose $\hat{A}$ as 
\begin{equation*}
\hat{A} = \frac{1}{nT} \sum_{i=1}^{n} \sum_{t=1}^{T} \tilde{\bx}_{it} \tilde{\bx}_{it}'  - \frac{1}{n} \sum_{i=1}^{n}  \bar{\tilde{\bx}}_{i}\bar{\tilde{\bx}}_{i}' =: \hat{A}_{1} - \hat{A}_{2}.
\end{equation*}
We wish to show that $\hat{A}_{1}^{*} = \hat{A}_{1} + O_{\Pr}( n^{-1/2} )$ and $\hat{A}_{2}^{*}=\hat{A}_{2} + O_{\Pr}(n^{-1/2} T^{-1})$, which implies the desired result. Fix any $1 \leq a,b \leq p$. Observe that 
\begin{align*}
\E_{\Pr_{W}} [\{ (\hat{A}_{1}^{*} - \hat{A}_{1})^{ab} \}^{2} ] &\leq \frac{1}{n^{2}T^{2}} \sum_{i=1}^{n} \left (\sum_{t=1}^{T} \tilde{x}_{it}^{a} \tilde{x}_{it}^{b} \right )^{2} \\
&\leq \frac{2}{n^{2}T^{2}} \sum_{i=1}^{n} \left (\sum_{t=1}^{T} \tilde{x}_{it}^{a} \tilde{x}_{it}^{b} - \E[\tilde{x}_{i1}^{a} \tilde{x}_{i1}^{b} \mid c_{i}] \right )^{2} + \frac{2}{n^{2}} \sum_{i=1}^{n} \E[ \tilde{x}_{i1}^{a} \tilde{x}_{i1}^{b} \mid c_{i}]^{2}.
\end{align*}
By the proof of Lemma \ref{lem1} (i), the expectation of the first term is $O\{ (nT)^{-1} \}$, while the expectation of the second term is $O (n^{-1})$. Therefore, we have 
$(\hat{A}_{1}^{*} - \hat{A}_{1})^{ab} = O_{\Pr}(n^{-1/2})$. It remains to show that $(\hat{A}_{2}^{*} - \hat{A}_{2})^{ab} = O_{\Pr}(n^{-1/2}T^{-1})$. Observe that
\begin{equation*}
\E_{\Pr_{W}} [\{ (\hat{A}_{2}^{*} - \hat{A}_{2})^{ab} \}^{2} ]   \leq \frac{1}{n^{2}} \sum_{i=1}^{n}  (\bar{\tilde{x}}^{a}_{i} \bar{\tilde{x}}^{b}_{i})^{2}.
\end{equation*}
By the proof of Lemma \ref{lem2} (i), the expectation of the right side is $O(n^{-1}T^{-2})$, which implies the desired  result. 

{\bf Part (ii)}: (ii) follows from the fact that $\E[\hat{S}_{2}^{*}] = \E[\hat{S}_{2}]$ and Lemma \ref{lem2} (ii). 

{\bf Part (iii)}: Since $\E[ \hat{S}^{*}_{2} ] = \E[ \hat{S}_{2} ]$, we have 
\begin{align*}
\hat{S}_{2}^{*} - \E[ \hat{S}_{2}^{*}] &= \hat{S}_{2}^{*} -\hat{S}_{2} + \hat{S}_{2} - \E[ \hat{S}_{2} ] \\
&=\frac{1}{n} \sum_{i=1}^{n} (w_{ni}-1) \bar{\tilde{\bx}}_{i} \bar{\epsilon}_{i} + (\hat{S}_{2} - \E[ \hat{S}_{2} ]).
\end{align*}
By Lemma \ref{lem2} (iii), the second term is $O_{\Pr}(n^{-1/2} T^{-1})$. It remains to show that the first term is $O_{\Pr}(n^{-1/2} T^{-1})$.
Fix any $1 \leq a \leq p$. Observe that 
\begin{equation*}
\E_{\Pr_{W}}\left [ \left \{ \frac{1}{n} \sum_{i=1}^{n} (w_{ni}-1) \bar{\tilde{x}}^{a}_{i} \bar{\epsilon}_{i} \right \}^{2} \right ] \leq \frac{1}{n^{2}}\sum_{i=1}^{n} (\bar{\tilde{x}}^{a}_{i} \bar{\epsilon}_{i})^{2}.
\end{equation*}
By the proof of Lemma \ref{lem2} (iii), the expectation of the right side is $O(n^{-1} T^{-2})$, which implies the desired result. 
\end{proof}

\begin{lemma}
\label{lem4}
Under conditions (A1)-(A3), we have: 
\begin{equation*}
\sup_{\bx \in \mathbb{R}^{p}} | \Pr_{W}\{ (nT)^{-1/2} {\textstyle \sum_{i=1}^{n}(w_{ni} -1) \sum_{t=1}^{T}} (\tilde{\bx}_{it}\epsilon_{it} - \E[\tilde{\bx}_{i1}\epsilon_{i1} \mid c_{i}] )\leq \bx \} - \Pr\{ N(\bm{0}, V_{1}) \leq \bx \} | \stackrel{\Pr}{\to} 0,
\end{equation*}
where $V_{1}=\sum_{k=-\infty}^{\infty} \E[ (\tilde{\bx}_{1,1}\epsilon_{1,1} - \E[\tilde{\bx}_{1,1}\epsilon_{1,1} \mid c_{1}])(\tilde{\bx}_{1,1+k}\epsilon_{1,1+k}-\E[\tilde{\bx}_{1,1+k}\epsilon_{1,1+k} \mid c_{1}])']$, provided that $V_{1}$ is nonsingular. 
\end{lemma}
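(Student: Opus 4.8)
The plan is to recognize the displayed quantity as a normalized sum of summands that are i.i.d.\ across $i$ but reweighted by the multinomial bootstrap weights, and then to apply a conditional Lindeberg--Feller central limit theorem. Put $\zeta_{n,it} = \tilde{\bx}_{it}\epsilon_{it} - \E[\tilde{\bx}_{i1}\epsilon_{i1}\mid c_{i}]$ and $\bm{\zeta}_{n,i} = T^{-1/2}\sum_{t=1}^{T}\zeta_{n,it}$, so that the statistic of interest is $S_{n}^{*} := n^{-1/2}\sum_{i=1}^{n}(w_{ni}-1)\bm{\zeta}_{n,i}$. For each $n$, the vectors $\bm{\zeta}_{n,1},\dots,\bm{\zeta}_{n,n}$ are i.i.d.\ with $\E[\bm{\zeta}_{n,1}] = \bm{0}$ and common covariance $\Sigma_{T_{n}} := \E[\bm{\zeta}_{n,1}\bm{\zeta}_{n,1}']$; by Theorem \ref{thmA1} (applied conditionally on $c_{1}$, exactly as in the proof of Lemma \ref{lem2}) one has $\Sigma_{T_{n}} \to V_{1}$ and $\Var(\bm{\zeta}_{n,1}) = O(1)$, and by Theorem \ref{thmA2} together with condition (A2) one has a moment bound $\E\|\bm{\zeta}_{n,1}\|^{4} \le C$ with $C$ independent of $n$.

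Next I would replace the multinomial weights by the equivalent construction $w_{ni} = \#\{k : I_{k} = i\}$ with $I_{1},\dots,I_{n}$ i.i.d.\ uniform on $\{1,\dots,n\}$ and independent of the data, giving $S_{n}^{*} = n^{-1/2}\sum_{k=1}^{n}(\bm{\zeta}_{n,I_{k}} - \bar{\bm{\zeta}}_{n})$ with $\bar{\bm{\zeta}}_{n} = n^{-1}\sum_{i=1}^{n}\bm{\zeta}_{n,i}$. Conditionally on the data, the summands $\bm{\zeta}_{n,I_{k}} - \bar{\bm{\zeta}}_{n}$, $k=1,\dots,n$, are i.i.d.\ with mean $\bm{0}$ and covariance $\hat{\Sigma}_{n} := n^{-1}\sum_{i=1}^{n}\bm{\zeta}_{n,i}\bm{\zeta}_{n,i}' - \bar{\bm{\zeta}}_{n}\bar{\bm{\zeta}}_{n}'$. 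Two facts then need checking. First, $\hat{\Sigma}_{n} \stackrel{\Pr}{\to} V_{1}$: an entrywise Markov bound using $\E\|\bm{\zeta}_{n,1}\|^{4} \le C$ gives $n^{-1}\sum_{i}\bm{\zeta}_{n,i}\bm{\zeta}_{n,i}' = \Sigma_{T_{n}} + O_{\Pr}(n^{-1/2})$, while $\bar{\bm{\zeta}}_{n} = O_{\Pr}(n^{-1/2})$ because $\Var(\bm{\zeta}_{n,1}) = O(1)$; combining with $\Sigma_{T_{n}} \to V_{1}$ yields the claim. Second, the conditional Lindeberg condition holds in probability: for every $\eta > 0$, using $\mathbf{1}\{\|\bm{\zeta}_{n,i}-\bar{\bm{\zeta}}_{n}\| > \eta\sqrt{n}\} \le \eta^{-2}n^{-1}\|\bm{\zeta}_{n,i}-\bar{\bm{\zeta}}_{n}\|^{2}$ one gets
\begin{equation*}
n^{-1}\sum_{i=1}^{n}\|\bm{\zeta}_{n,i}-\bar{\bm{\zeta}}_{n}\|^{2}\mathbf{1}\{\|\bm{\zeta}_{n,i}-\bar{\bm{\zeta}}_{n}\| > \eta\sqrt{n}\} \le \eta^{-2}n^{-2}\sum_{i=1}^{n}\|\bm{\zeta}_{n,i}-\bar{\bm{\zeta}}_{n}\|^{4} = O_{\Pr}(n^{-1}),
\end{equation*}
since $n^{-1}\sum_{i}\|\bm{\zeta}_{n,i}\|^{4} = O_{\Pr}(1)$ (its mean is at most $C$) and $\|\bar{\bm{\zeta}}_{n}\|^{4} = O_{\Pr}(n^{-2})$.

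Finally I would conclude with the conditional Lindeberg--Feller CLT. For each $\bm{t} \in \mathbb{R}^{p}$ the conditional characteristic function factorizes, $\E_{\Pr_{W}}[e^{\mathrm{i}\bm{t}'S_{n}^{*}}] = \phi_{n}(\bm{t}/\sqrt{n})^{n}$ with $\phi_{n}(\bm{s}) = n^{-1}\sum_{i=1}^{n}e^{\mathrm{i}\bm{s}'(\bm{\zeta}_{n,i}-\bar{\bm{\zeta}}_{n})}$, and a second-order Taylor expansion of $\phi_{n}(\bm{t}/\sqrt{n})$ whose remainder is controlled by the Lindeberg bound above yields $\E_{\Pr_{W}}[e^{\mathrm{i}\bm{t}'S_{n}^{*}}] \stackrel{\Pr}{\to} e^{-\bm{t}'V_{1}\bm{t}/2}$. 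A routine subsequence argument then promotes this to the statement that, along every subsequence, there is a further subsequence along which $\Pr_{W}(S_{n}^{*} \le \cdot)$ converges weakly to $N(\bm{0},V_{1})$ almost surely (L\'{e}vy's continuity theorem, using boundedness of $\hat{\Sigma}_{n}$ to secure tightness); since $V_{1}$ is nonsingular, the limiting distribution function is continuous, so P\'{o}lya's theorem upgrades this to uniform convergence of distribution functions, whence $\sup_{\bm{x}\in\mathbb{R}^{p}}|\Pr_{W}(S_{n}^{*} \le \bm{x}) - \Pr(N(\bm{0},V_{1}) \le \bm{x})| \stackrel{\Pr}{\to} 0$. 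The only genuinely delicate bookkeeping is that $\bm{\zeta}_{n,i}$ forms a triangular array (both $n$ and $T = T_{n}$ grow simultaneously), so the classical i.i.d.\ bootstrap CLT does not apply verbatim; this is precisely why the $T$-free moment bound furnished by Theorem \ref{thmA2} and condition (A2) is indispensable, and with it in hand the rest is standard.
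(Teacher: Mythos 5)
Your proposal is correct and follows essentially the same route as the paper: reduce to a conditional CLT for the resampled cross-section sum, verify that the conditional covariance converges to $V_{1}$ and that a moment/Lindeberg condition holds in probability (the paper uses a Lyapunov third-moment bound where you use a fourth-moment Lindeberg bound, both supplied by Theorem \ref{thmA2} uniformly in $T$), and then use the subsequence trick plus P\'{o}lya's theorem to upgrade to uniform convergence of the conditional distribution functions. The differences are cosmetic, so no further comment is needed.
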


\begin{proof}
The proof of Lemma \ref{lem4} needs some effort. 
Define $\bm{u}_{ni}= T^{-1/2} \sum_{t=1}^{T} (\epsilon_{it} \bx_{it} - \E[\epsilon_{it} \bx_{it} \mid c_{i}])$. We first show that 
\begin{equation}
\frac{1}{n} \sum_{i=1}^{n} \bm{u}_{ni} \stackrel{\Pr}{\to} \bm{0}, \ \frac{1}{n} \sum_{i=1}^{n} \bm{u}_{ni}\bm{u}_{ni}' \stackrel{\Pr}{\to} V_{1}, \ \frac{1}{n^{3/2}} \sum_{i=1}^{n} \| \bm{u}_{ni} \|^{3}  \stackrel{\Pr}{\to}  0.
\label{event}
\end{equation}
We divide the proof of (\ref{event}) into three steps. 

{\bf Step 1}: (first assertion of (\ref{event})) Fix any $1 \leq a \leq p$. Since $\E[u_{ni}^{a}] = 0$, it suffices to show that 
$\Var (n^{-1} \sum_{i=1}^{n} u_{ni}^{a}) \to 0$. By the cross section independence, $\Var (n^{-1} \sum_{i=1}^{n} u_{ni}^{a}) = n^{-1} \Var (u^{a}_{n1})  = O(n^{-1})$, which implies the desired result.

{\bf Step 2}: (second assertion of (\ref{event})) Fix any $1 \leq a,b \leq p$. We wish to show that $n^{-1} \sum_{i=1}^{n} u_{ni}^{a} u_{ni}^{b} \stackrel{\Pr}{\to} V_{1}^{ab}$.
Observe first that $\E[ n^{-1} \sum_{i=1}^{n} u_{ni}^{a} u_{ni}^{b} ] \to V_{1}^{ab}$.
Thus, it suffices to show that $\Var( n^{-1} \sum_{i=1}^{n} u_{ni}^{a} u_{ni}^{b} )\to 0$ as $n \to \infty$. By the cross section independence, 
\begin{equation}
\Var\left ( \frac{1}{n} \sum_{i=1}^{n} u_{ni}^{a} u_{ni}^{b} \right )
=\frac{1}{n^{2}}\sum_{i=1}^{n} \Var(u_{ni}^{a} u_{ni}^{b} ) = n^{-1} \Var(u_{n1}^{a} u_{n1}^{b} ) \leq n^{-1} \E[(u_{n1}^{a} u_{n1}^{b})^{2}].
\label{moment}
\end{equation}
As in the proof of Proposition \ref{prop2}, we can show that $\E[(u_{n1}^{a} u_{n1}^{b})^{2}] = O(1)$, so that the right side on (\ref{moment}) is $O(n^{-1})$, which implies the desired result.

{\bf Step 3}: (third assertion of (\ref{event})) 
From the proof of Lemma \ref{lem2} part ($iv$) above, one can show that $\E[ | u_{n1}^{a} |^{3}] = O(1) = o(n^{1/2})$ for any $1 \leq a \leq p$, which in turn implies the desired result.

We are now in position to prove the lemma. 
Define $\bm{u}_{n1}^{*},\dots,\bm{u}_{nn}^{*}$ in such a way that $\bm{u}_{ni}^{*} = \bm{u}_{nj}$ if $\bm{z}_{i}^{*} = \bm{z}_{j}$ for some $1 \leq j \leq n$ for $1 \leq i \leq n$. Letting
$\tilde{\bm{u}}^{*}_{ni} = \bm{u}_{ni}^{*} - n^{-1}\sum_{i=1}^{n} \bm{u}_{ni}$, we have
$(nT)^{-1/2} \sum_{i=1}^{n} (w_{ni} -1) \sum_{t=1}^{T} (\tilde{\bx}_{it}\epsilon_{it} - \E[\tilde{\bx}_{i1}\epsilon_{i1} \mid c_{i}] ) = n^{-1/2} \sum_{i=1}^{n}\tilde{\bm{u}}_{ni}^{*}$. Observe that $\tilde{\bm{u}}_{n1}^{*},\dots,\tilde{\bm{u}}_{nn}^{*}$ are i.i.d. with mean zero and covariance matrix $n^{-1} \sum_{i=1}^{n} (\bm{u}_{ni} - n^{-1}\sum_{j=1}^{n}\bm{u}_{nj})(\bm{u}_{ni} - n^{-1}\sum_{j=1}^{n}\bm{u}_{nj})'$ conditional on $\{ c_{i}, \bm{z}_{it} : i \geq 1, t \geq 1 \}$. 
We use the following  fact: let $\{ X_{n} \}$ be a sequence of random variables and let $c$ be a constant; if for any subsequence $\{ n' \}$ of $\{ n \}$ there exists a further subsequence $\{ n'' \}$ such that $X_{n''} \to c$ almost surely, then $X_{n} \stackrel{\Pr}{\to} c$.
Recall that $T=T_{n}$ is indexed by $n$. 
Take any subsequence $\{ n' \}$ of $\{ n \}$. Then there exists a further subsequence $\{ n'' \}$ such that along with the subsequence $\{ n'' \}$, (\ref{event}) holds almost surely. This means that for almost every realization of $c_{i}, \bm{z}_{it}, i \geq 1, t \geq 1$, along with the subsequence $\{n''\}$, 
$n^{-1} \sum_{i=1}^{n} (\bm{u}_{ni} - n^{-1}\sum_{j=1}^{n}\bm{u}_{nj})(\bm{u}_{ni} - n^{-1}\sum_{j=1}^{n}\bm{u}_{nj})' \to V_{1}$ and 
the Lyapunov condition is satisfied for $n^{-1/2} \sum_{i=1}^{n}\tilde{\bm{u}}^{*}_{ni}$. Therefore, along with the subsequence $\{ n'' \}$, conditional on $\{ c_{i}, \bm{z}_{it} : i \geq 1, t \geq 1 \}$, $(nT)^{-1/2} \sum_{i=1}^{n} (w_{ni} -1) \sum_{t=1}^{T} (\tilde{\bx}_{it}\epsilon_{it} - \E[\tilde{\bx}_{i1}\epsilon_{i1} \mid c_{i}] ) \stackrel{d}{\to} N(\bm{0},V_{1})$ for almost every realization of $c_{i}, \bm{z}_{it}, i \geq 1, t \geq 1$. By the above fact, we obtain the desired result.
\end{proof}

We are now in position to prove Proposition \ref{prop3}.

\begin{proof}[Proof of Proposition \ref{prop3}]
We wish to verify that $\hat{S}_{1}^{*} = O_{\Pr}(d_{nT}^{-1/2})$. Observe that 
\begin{align}
\hat{S}_{1}^{*} &= \hat{S}_{1} + \hat{S}_{1}^{*} - \hat{S}_{1} = O_{\Pr}(d_{nT}^{-1/2}) + (\hat{S}_{1}^{*} - \hat{S}_{1}), \notag \\
\intertext{where}
\hat{S}_{1}^{*} - \hat{S}_{1} &= \frac{1}{nT} \sum_{i=1}^{n} (w_{ni}-1) \sum_{t=1}^{T} (\tilde{\bx}_{it} \epsilon_{it} - \E[ \tilde{\bx}_{i1} \epsilon_{i1} \mid c_{i} ]) \notag \\
&\quad + \frac{1}{n} \sum_{i=1}^{n} (w_{ni}-1)   \E[ \tilde{\bx}_{i1} \epsilon_{i1} \mid c_{i} ]. \label{expansion}
\end{align}
Here the first term is $O_{\Pr}\{ (nT)^{-1/2} \}$ and the second term is zero if $ \E[ \tilde{\bx}_{it} \epsilon_{it} \mid c_{i} ] = \bm{0}$ and is $O_{\Pr}(n^{-1/2})$ otherwise. Hence we conclude that $\hat{S}_{1}^{*} = O_{\Pr}(d_{nT}^{-1/2})$. 

By Lemma \ref{lem3}, we have 
\begin{align*}
\hat{\bbeta}^{*} - \bbeta_{0} &= (\hat{A}^{*})^{-1}(\hat{S}_{1}^{*} - \hat{S}_{2}^{*}) =  (\hat{A}^{*})^{-1}(\hat{S}_{1}^{*} - T^{-1}B_{T} + O_{\Pr}(n^{-1/2}T^{-1})) \\
&= A^{-1} \hat{S}_{1}^{*}- A^{-1} \{ {\textstyle \sum}_{m=0}^{\infty} T^{-m-1} (D_{T}A^{-1})^{m} \} B_{T} + O_{\Pr}(n^{-1/2} \max\{T^{-1},d_{nT}^{-1/2} \}).
\end{align*}
Combining this expansion with Proposition \ref{prop1}, we obtain the expansion (\ref{BahadurB}) (note the equivalence in Lemma \ref{lemB1}). 

For (\ref{bootd}), suppose first that $\E[ \tilde{\bx}_{it} \epsilon_{it} \mid c_{i} ] = \bm{0}$. Then the second term in (\ref{expansion}) vanishes, and the assertion (\ref{bootd}) follows from Lemma \ref{lem4}. 
Suppose that $\E[ \tilde{\bx}_{it} \epsilon_{it} \mid c_{i} ] \neq \bm{0}$ with positive probability. Then the second term dominates the first term in (\ref{expansion}), it is routine to verify that 
\begin{equation*}
\sup_{\bx \in \mathbb{R}^{p}} | \Pr_{W}\{  n^{-1/2} {\textstyle \sum}_{i=1}^{n}(w_{ni} -1) \E[ \tilde{\bx}_{i1} \epsilon_{i1} \mid c_{i} ] \leq \bx \} - \Pr\{ N(\bm{0}, V_{2}) \leq \bx \} | \stackrel{\Pr}{\to} 0,
\end{equation*}
where $V_{2} = \E[ \E[ \tilde{\bx}_{1,1} \epsilon_{1,1} \mid c_{1} ]^{\otimes 2} ]$. This completes the proof. 
\end{proof}

\subsection{Proof of Proposition \ref{prop4}}
The proof is similar to that of Proposition \ref{prop2}. Since $\hat{\epsilon}_{it}(\bbeta)=-(\tilde{\bx}_{it}-\bar{\tilde{\bx}}_{i})'(\bbeta-\bbeta_{0}) + \epsilon_{it} - \bar{\epsilon}_{i}$, 
we have 
\begin{equation*}
\hat{\Sigma}_{nT}^{*} = \tilde{\Sigma}_{nT}^{*} + \frac{1}{n^{2}} \sum_{i=1}^{n} w_{ni} (-R_{1i} - R_{2i}^{*} + R_{3i}^{*} - R_{1i}' - (R_{2i}^{*})'+ (R_{3i}^{*})' + R_{4i} + R_{5i}^{*}),
\end{equation*}
where
\begin{align*} 
\tilde{\Sigma}_{nT}^{*} &= n^{-2} {\textstyle \sum}_{i=1}^{n} w_{ni}   ( T^{-1}  {\textstyle \sum}_{t=1}^{T} \tilde{\bx}_{it} \epsilon_{it} )^{\otimes 2}, \\
R_{1i} &= (T^{-1}  {\textstyle \sum}_{t=1}^{T} \tilde{\bx}_{it} \epsilon_{it}  ) (\bar{\tilde{\bx}}_{i} \bar{\epsilon}_{i})', \\
R_{2i}^{*} &=  ( T^{-1}  {\textstyle \sum}_{t=1}^{T} \tilde{\bx}_{it} \epsilon_{it} )\{ T^{-1} {\textstyle \sum}_{t=1}^{T} (\tilde{\bx}_{it} - \bar{\tilde{\bx}}_{i})^{\otimes 2} (\tilde{\bbeta}^{*}-\bbeta_{0}) \}', \\
R_{3i}^{*} &= (\bar{\tilde{\bx}}_{i} \bar{\epsilon}_{i}) \{ T^{-1} {\textstyle \sum}_{t=1}^{T} (\tilde{\bx}_{it} - \bar{\tilde{\bx}}_{i})^{\otimes 2} (\tilde{\bbeta}^{*}-\bbeta_{0}) \}',  \
R_{4i} = (\bar{\tilde{\bx}}_{i} \bar{\epsilon}_{i})^{\otimes 2}, \\
R_{5i}^{*} &= \{ T^{-1}  {\textstyle \sum}_{t=1}^{T} (\tilde{\bx}_{it} - \bar{\tilde{\bx}}_{i})^{\otimes 2} (\tilde{\bbeta}^{*}-\bbeta_{0}) \}^{\otimes 2}.
\end{align*}
By the proof of Proposition \ref{prop2}, together with the assumption that 
$\| \tilde{\bbeta}^{*}-\bbeta_{0} \|  = \\ O_{\Pr_{W}}[ \max \{ d_{nT}^{-1/2}, T^{-1}  \}]$, and Lemma \ref{lemB1}, we have 
\begin{equation*}
\| \hat{\Sigma}_{nT}^{*} - \tilde{\Sigma}_{nT}^{*} \|_{\op} = O_{\Pr_{W}}[ \max \{ n^{-1/2}T^{-1} d_{nT}^{-1/2}, n^{-1/2} d_{nT}^{-1} \}],
\end{equation*}
in probability. 
Let 
\begin{equation*}
\tilde{\Sigma}_{nT} = n^{-2} {\textstyle \sum}_{i=1}^{n}   ( T^{-1}  {\textstyle \sum}_{t=1}^{T} \tilde{\bx}_{it} \epsilon_{it} )^{\otimes 2}.
\end{equation*}
By the proof of Proposition \ref{prop2}, we have $\| \tilde{\Sigma}_{nT} - \Sigma_{nT} \|_{\op} = O_{\Pr}(n^{-1/2}d_{nT}^{-1})$, and hence we only need to show that 
$\| \tilde{\Sigma}_{nT}^{*} - \tilde{\Sigma}_{nT} \|_{\op} =  O_{\Pr_{W}}(n^{-1/2}d_{nT}^{-1})$ in probability. Fix any $1 \leq a,b \leq p$, and observe that
\begin{align*}
&(\tilde{\Sigma}_{nT}^{*} - \tilde{\Sigma}_{nT})^{ab} \\
&= \frac{1}{n^{2}} \sum_{i=1}^{n}(w_{ni}-1) \left ( \frac{1}{T^{2}}\sum_{s=1}^{T} \sum_{t=1}^{T} \tilde{x}_{is}^{a} \epsilon_{is} \tilde{x}_{it}^{b} \epsilon_{it} \right ) \\
&=\frac{1}{n^{2}} \sum_{i=1}^{n} (w_{ni}-1)\left \{ \frac{1}{T^{2}}\sum_{s=1}^{T} \sum_{t=1}^{T} (\tilde{x}_{is}^{a} \epsilon_{is} - \E[\tilde{x}_{i1}^{a} \epsilon_{i1} \mid c_{i}]) ( \tilde{x}_{it}^{b} \epsilon_{it} - \E[ \tilde{x}_{i1}^{b} \epsilon_{i1} \mid c_{i}]) \right \} \\
&\quad + \frac{1}{n^{2}} \sum_{i=1}^{n} (w_{ni}-1)\E[\tilde{x}_{i1}^{a} \epsilon_{i1} \mid c_{i}] \left \{ \frac{1}{T} \sum_{t=1}^{T} (\tilde{x}_{it}^{b} \epsilon_{it} -\E[\tilde{x}_{i1}^{b} \epsilon_{i1} \mid c_{i}]) \right \} \\
&\quad + \frac{1}{n^{2}} \sum_{i=1}^{n} (w_{ni}-1)\E[\tilde{x}_{i1}^{b} \epsilon_{i1} \mid c_{i}] \left \{  \frac{1}{T} \sum_{t=1}^{T} (\tilde{x}_{it}^{a} \epsilon_{it}-\E[\tilde{x}_{i1}^{a} \epsilon_{i1} \mid c_{i}]) \right \} \\
&\quad -  \frac{1}{n^{2}} \sum_{i=1}^{n} (w_{ni}-1)\E[\tilde{x}_{i1}^{a}\epsilon_{i1} \mid c_{i}] \E[\tilde{x}_{i1}^{b}\epsilon_{i1} \mid c_{i}] \\
&=:(I) + (II) + (III) - (IV). 
\end{align*}
By the proof of Proposition \ref{prop2}, we can deduce that $\E[ (I)^{2} ] = \E[ \E_{W} [ (I)^2 ]] = O(n^{-3}T^{-2})$, and when $\E[ \tilde{\bx}_{it} \epsilon_{it} \mid c_{i} ] \neq \bm{0}$ with positive probability, 
$\E[ (II)^{2} ] = O(n^{-3}T^{-1}), \E[ (III)^{2} ] = O(n^{-3}T^{-1})$ and $\E[ (IV)^{2} ] = O(n^{-3})$. Therefore, we obtain the desired assertion. 

\qed

\newpage

\linespread{0.95}
\scriptsize

\begin{landscape}
\begin{longtable} {cc|cccc|ccccccc}
\caption{Bias and Empirical Coverage for panel AR(1) model}
\label{table.mc.0}\\
\hline\hline																									
$n$	&	$T$	&	FE	&	HK	&	GMM & HPJ	&	FE-CCM	&	HK	& GMM	& HPJ-CCM	&	HPJ-FEB	&	HPJ-HPJB	&	HPJ-HPJPB	\\
\hline
                  &           &\multicolumn{4}{c|}{Bias} & \multicolumn{7}{c}{Coverage}\\
\hline                 
50	&	12	&	-0.1829	&	-0.0482	&	-0.0898	&	-0.0023	&	0.0005	&	0.6610	&	0.7580	&	0.7870	&	0.7420	&	0.9145	&	0.8995	\\
	&		&(	0.0366	)&(	0.0397	)&(	0.0717	)&(	0.0605	)&		&		&		&		&		&		&		\\
	&		&[	1.0337	]&[	1.2365	]&[	1.0423	]&[	1.5825	]&		&		&		&		&		&		&		\\
	&	16	&	-0.1351	&	-0.0311	&	-0.0756	&	0.0073	&	0.0025	&	0.7550	&	0.6910	&	0.7845	&	0.7430	&	0.9255	&	0.8970	\\
	&		&(	0.0299	)&(	0.0318	)&(	0.0497	)&(	0.0470	)&		&		&		&		&		&		&		\\
	&		&[	1.0455	]&[	1.2048	]&[	1.0134	]&[	1.5672	]&		&		&		&		&		&		&		\\
	&	20	&	-0.1064	&	-0.0218	&	-0.0649	&	0.0087	&	0.0070	&	0.8115	&	0.6350	&	0.7910	&	0.7515	&	0.9305	&	0.8980	\\
	&		&(	0.0257	)&(	0.0270	)&(	0.0390	)&(	0.0387	)&		&		&		&		&		&		&		\\
	&		&[	1.0458	]&[	1.1849	]&[	1.0103	]&[	1.5475	]&		&		&		&		&		&		&		\\
	&	24	&	-0.0870	&	-0.0156	&	-0.0581	&	0.0091	&	0.0230	&	0.8485	&	0.5815	&	0.7840	&	0.7575	&	0.9210	&	0.8965	\\
	&		&(	0.0226	)&(	0.0236	)&(	0.0330	)&(	0.0329	)&		&		&		&		&		&		&		\\
	&		&[	1.0453	]&[	1.1663	]&[	1.0355	]&[	1.5169	]&		&		&		&		&		&		&		\\
\hline
100	&	12	&	-0.1814	&	-0.0465	&	-0.0471	&	-0.0008	&	0.0000	&	0.4685	&	0.8500	&	0.7865	&	0.7490	&	0.9310	&	0.9250	\\
	&		&(	0.0256	)&(	0.0278	)&(	0.0521	)&(	0.0423	)&		&		&		&		&		&		&		\\
	&		&[	1.0095	]&[	1.2262	]&[	1.0330	]&[	1.5654	]&		&		&		&		&		&		&		\\
	&	16	&	-0.1342	&	-0.0301	&	-0.0417	&	0.0069	&	0.0000	&	0.6340	&	0.8255	&	0.7575	&	0.7385	&	0.9375	&	0.9105	\\
	&		&(	0.0214	)&(	0.0228	)&(	0.0357	)&(	0.0343	)&		&		&		&		&		&		&		\\
	&		&[	1.0412	]&[	1.2222	]&[	0.9890	]&[	1.6165	]&		&		&		&		&		&		&		\\
	&	20	&	-0.1046	&	-0.0199	&	-0.0361	&	0.0098	&	0.0000	&	0.7430	&	0.7730	&	0.7640	&	0.7440	&	0.9265	&	0.9075	\\
	&		&(	0.0180	)&(	0.0189	)&(	0.0289	)&(	0.0277	)&		&		&		&		&		&		&		\\
	&		&[	1.0192	]&[	1.1760	]&[	1.0191	]&[	1.5633	]&		&		&		&		&		&		&		\\
	&	24	&	-0.0863	&	-0.0148	&	-0.0335	&	0.0094	&	0.0000	&	0.7870	&	0.7240	&	0.7705	&	0.7525	&	0.9190	&	0.9050	\\
	&		&(	0.0157	)&(	0.0163	)&(	0.0233	)&(	0.0232	)&		&		&		&		&		&		&		\\
	&		&[	1.0145	]&[	1.1422	]&[	0.9861	]&[	1.5131	]&		&		&		&		&		&		&		\\
\hline
200	&	12	&	-0.1815	&	-0.0466	&	-0.0258	&	-0.0007	&	0.0000	&	0.2195	&	0.9000	&	0.7875	&	0.7515	&	0.9340	&	0.9330	\\
	&		&(	0.0185	)&(	0.0201	)&(	0.0359	)&(	0.0303	)&		&		&		&		&		&		&		\\
	&		&[	1.0203	]&[	1.2507	]&[	0.9874	]&[	1.5811	]&		&		&		&		&		&		&		\\
	&	16	&	-0.1329	&	-0.0287	&	-0.0219	&	0.0075	&	0.0000	&	0.4295	&	0.8880	&	0.7775	&	0.7555	&	0.9295	&	0.9130	\\
	&		&(	0.0143	)&(	0.0152	)&(	0.0254	)&(	0.0232	)&		&		&		&		&		&		&		\\
	&		&[	0.9776	]&[	1.1576	]&[	0.9765	]&[	1.5514	]&		&		&		&		&		&		&		\\
	&	20	&	-0.1045	&	-0.0198	&	-0.0189	&	0.0095	&	0.0000	&	0.5930	&	0.8510	&	0.7440	&	0.7300	&	0.9085	&	0.8995	\\
	&		&(	0.0128	)&(	0.0134	)&(	0.0209	)&(	0.0194	)&		&		&		&		&		&		&		\\
	&		&[	1.0191	]&[	1.1802	]&[	1.0226	]&[	1.5466	]&		&		&		&		&		&		&		\\
	&	24	&	-0.0858	&	-0.0144	&	-0.0177	&	0.0088	&	0.0000	&	0.6890	&	0.8250	&	0.7560	&	0.7445	&	0.9150	&	0.9025	\\
	&		&(	0.0108	)&(	0.0113	)&(	0.0174	)&(	0.0160	)&		&		&		&		&		&		&		\\
	&		&[	0.9808	]&[	1.1181	]&[	1.0198	]&[	1.4731	]&		&		&		&		&		&		&		\\
\hline
\end{longtable}
Notes: Monte Carlo experiments based on 2,000 repetitions. The standard  deviations are inside parenthesis. Inside brackets are the ratio of the averages of the standard errors to the simulation of the standard deviations. The DGP is $
y_{it} = c_{i} + \phi y_{i,t-1} + u_{it},$ with true parameter $\phi=0.8$. 
\end{landscape}

\begin{landscape}
\begin{longtable} {cc|cccc|ccccccc}
\caption{Bias and Empirical Coverage for panel AR(2) model}
\label{table.mc.1}\\
\hline\hline																									
$n$	&	$T$	&	FE	&	HK	&	GMM & HPJ	&	FE-CCM	&	HK	& GMM	& HPJ-CCM	&	HPJ-FEB	&	HPJ-HPJB	&	HPJ-HPJPB	\\
\hline
                  &           &\multicolumn{4}{c|}{Bias} & \multicolumn{7}{c}{Coverage}\\
\hline                 
50	&	12	&	-0.3534	&	-0.2437	&	-0.5201	&	-0.0943	&	0.0000	&	0.0025	&	0.0000	&	0.6000	&	0.5615	&	0.7230	&	0.7960	\\
	&		&(	0.0587	)&(	0.0636	)&(	0.1213	)&(	0.0862	)&		&		&		&		&		&		&		\\
	&		&[	1.0592	]&[	1.6464	]&[	1.4145	]&[	1.4177	]&		&		&		&		&		&		&		\\
	&	16	&	-0.2719	&	-0.1845	&	-0.4207	&	-0.0500	&	0.0005	&	0.0070	&	0.0000	&	0.7440	&	0.7180	&	0.8525	&	0.8770	\\
	&		&(	0.0492	)&(	0.0523	)&(	0.0977	)&(	0.0704	)&		&		&		&		&		&		&		\\
	&		&[	1.0309	]&[	1.6146	]&[	1.4062	]&[	1.3855	]&		&		&		&		&		&		&		\\
	&	20	&	-0.2205	&	-0.1480	&	-0.3500	&	-0.0285	&	0.0000	&	0.0150	&	0.0000	&	0.7810	&	0.7660	&	0.8800	&	0.8910	\\
	&		&(	0.0444	)&(	0.0467	)&(	0.0811	)&(	0.0633	)&		&		&		&		&		&		&		\\
	&		&[	1.0570	]&[	1.6543	]&[	1.4037	]&[	1.4390	]&		&		&		&		&		&		&		\\
	&	24	&	-0.1852	&	-0.1234	&	-0.2984	&	-0.0162	&	0.0005	&	0.0235	&	0.0000	&	0.8245	&	0.8145	&	0.9175	&	0.9175	\\
	&		&(	0.0377	)&(	0.0393	)&(	0.0663	)&(	0.0536	)&		&		&		&		&		&		&		\\
	&		&[	0.9921	]&[	1.5584	]&[	1.3460	]&[	1.3665	]&		&		&		&		&		&		&		\\
\hline
100	&	12	&	-0.3531	&	-0.2433	&	-0.5241	&	-0.0951	&	0.0000	&	0.0000	&	0.0000	&	0.4230	&	0.3945	&	0.5985	&	0.6535	\\
	&		&(	0.0408	)&(	0.0442	)&(	0.0922	)&(	0.0598	)&		&		&		&		&		&		&		\\
	&		&[	1.0240	]&[	1.6167	]&[	1.4195	]&[	1.3820	]&		&		&		&		&		&		&		\\
	&	16	&	-0.2716	&	-0.1842	&	-0.4260	&	-0.0500	&	0.0000	&	0.0000	&	0.0000	&	0.6545	&	0.6350	&	0.7920	&	0.8200	\\
	&		&(	0.0343	)&(	0.0364	)&(	0.0743	)&(	0.0497	)&		&		&		&		&		&		&		\\
	&		&[	0.9987	]&[	1.5902	]&[	1.4024	]&[	1.3721	]&		&		&		&		&		&		&		\\
	&	20	&	-0.2191	&	-0.1466	&	-0.3556	&	-0.0278	&	0.0000	&	0.0000	&	0.0000	&	0.7340	&	0.7220	&	0.8645	&	0.8770	\\
	&		&(	0.0316	)&(	0.0332	)&(	0.0647	)&(	0.0457	)&		&		&		&		&		&		&		\\
	&		&[	1.0415	]&[	1.6647	]&[	1.4574	]&[	1.4550	]&		&		&		&		&		&		&		\\
	&	24	&	-0.1839	&	-0.1220	&	-0.3067	&	-0.0152	&	0.0000	&	0.0010	&	0.0000	&	0.7995	&	0.7970	&	0.9060	&	0.9125	\\
	&		&(	0.0280	)&(	0.0292	)&(	0.0554	)&(	0.0391	)&		&		&		&		&		&		&		\\
	&		&[	1.0268	]&[	1.6383	]&[	1.4494	]&[	1.4042	]&		&		&		&		&		&		&		\\
\hline
200	&	12	&	-0.3518	&	-0.2420	&	-0.5212	&	-0.0929	&	0.0000	&	0.0000	&	0.0000	&	0.2200	&	0.1990	&	0.3795	&	0.4220	\\
	&		&(	0.0282	)&(	0.0305	)&(	0.0652	)&(	0.0414	)&		&		&		&		&		&		&		\\
	&		&[	0.9874	]&[	1.5791	]&[	1.3727	]&[	1.3457	]&		&		&		&		&		&		&		\\
	&	16	&	-0.2714	&	-0.1840	&	-0.4277	&	-0.0494	&	0.0000	&	0.0000	&	0.0000	&	0.5080	&	0.4880	&	0.6705	&	0.7050	\\
	&		&(	0.0246	)&(	0.0262	)&(	0.0560	)&(	0.0361	)&		&		&		&		&		&		&		\\
	&		&[	1.0091	]&[	1.6156	]&[	1.4378	]&[	1.4059	]&		&		&		&		&		&		&		\\
	&	20	&	-0.2187	&	-0.1461	&	-0.3603	&	-0.0268	&	0.0000	&	0.0000	&	0.0000	&	0.6890	&	0.6790	&	0.8340	&	0.8515	\\
	&		&(	0.0219	)&(	0.0230	)&(	0.0481	)&(	0.0308	)&		&		&		&		&		&		&		\\
	&		&[	1.0143	]&[	1.6281	]&[	1.4588	]&[	1.3852	]&		&		&		&		&		&		&		\\
	&	24	&	-0.1837	&	-0.1218	&	-0.3116	&	-0.0152	&	0.0000	&	0.0000	&	0.0000	&	0.7710	&	0.7670	&	0.8935	&	0.9020	\\
	&		&(	0.0198	)&(	0.0207	)&(	0.0406	)&(	0.0278	)&		&		&		&		&		&		&		\\
	&		&[	1.0164	]&[	1.6395	]&[	1.4264	]&[	1.4035	]&		&		&		&		&		&		&		\\\hline
\end{longtable}
Notes: Monte Carlo experiments based on 2,000 repetitions. The standard  deviations are inside parenthesis. Inside brackets are the ratio of the averages of standard errors to simulation of the standard deviations. The DGP is $
y_{it} = c_{i} + \phi_{1} y_{i,t-1}+ \phi_{2} y_{i,t-2} + u_{it},$ with true parameter $\phi_{1}=\phi_{2}=0.4$. 
\end{landscape}

\newpage

\begin{landscape}
\begin{longtable} {cc|cccc|ccccccc}
\caption{Bias and Empirical Coverage for panel AR(2) model}
\label{table.mc.2}\\
\hline\hline																									
$n$	&	$T$	&	FE	&	HK	&	GMM & HPJ	&	FE-CCM	&	HK	& GMM	& HPJ-CCM	&	HPJ-FEB	&	HPJ-HPJB	&	HPJ-HPJPB	\\
\hline
                  &           &\multicolumn{4}{c|}{Bias} & \multicolumn{7}{c}{Coverage}\\
\hline                             
50	&	12	&	-0.0201	&	0.0374	&	0.0364	&	0.0065	&	0.8555	&	0.9180	&	0.9775	&	0.9170	&	0.9040	&	0.9245	&	0.9350	\\
	&		&(	0.0268	)&(	0.0290	)&(	0.0307	)&(	0.0290	)&		&		&		&		&		&		&		\\
	&		&[	1.0289	]&[	0.7480	]&[	0.6187	]&[	1.0853	]&		&		&		&		&		&		&		\\
	&	16	&	-0.0151	&	0.0283	&	0.0255	&	0.0038	&	0.8830	&	0.9435	&	0.9900	&	0.9230	&	0.9170	&	0.9300	&	0.9435	\\
	&		&(	0.0226	)&(	0.0240	)&(	0.0250	)&(	0.0240	)&		&		&		&		&		&		&		\\
	&		&[	1.0096	]&[	0.7137	]&[	0.5954	]&[	1.0522	]&		&		&		&		&		&		&		\\
	&	20	&	-0.0107	&	0.0242	&	0.0199	&	0.0045	&	0.9040	&	0.9515	&	0.9900	&	0.9225	&	0.9105	&	0.9290	&	0.9350	\\
	&		&(	0.0209	)&(	0.0219	)&(	0.0227	)&(	0.0217	)&		&		&		&		&		&		&		\\
	&		&[	1.0543	]&[	0.7275	]&[	0.6117	]&[	1.0772	]&		&		&		&		&		&		&		\\
	&	24	&	-0.0081	&	0.0212	&	0.0159	&	0.0046	&	0.9100	&	0.9650	&	0.9950	&	0.9390	&	0.9320	&	0.9415	&	0.9530	\\
	&		&(	0.0182	)&(	0.0190	)&(	0.0197	)&(	0.0187	)&		&		&		&		&		&		&		\\
	&		&[	1.0066	]&[	0.6881	]&[	0.5893	]&[	1.0189	]&		&		&		&		&		&		&		\\
\hline
100	&	12	&	-0.0206	&	0.0369	&	0.0421	&	0.0057	&	0.7925	&	0.8005	&	0.8960	&	0.9145	&	0.9005	&	0.9230	&	0.9330	\\
	&		&(	0.0192	)&(	0.0208	)&(	0.0221	)&(	0.0206	)&		&		&		&		&		&		&		\\
	&		&[	1.0344	]&[	0.7598	]&[	0.6246	]&[	1.0830	]&		&		&		&		&		&		&		\\
	&	16	&	-0.0147	&	0.0288	&	0.0328	&	0.0043	&	0.8350	&	0.8530	&	0.9310	&	0.9295	&	0.9165	&	0.9335	&	0.9355	\\
	&		&(	0.0162	)&(	0.0172	)&(	0.0181	)&(	0.0170	)&		&		&		&		&		&		&		\\
	&		&[	1.0184	]&[	0.7236	]&[	0.6017	]&[	1.0508	]&		&		&		&		&		&		&		\\
	&	20	&	-0.0110	&	0.0239	&	0.0269	&	0.0042	&	0.8675	&	0.8875	&	0.9475	&	0.9315	&	0.9255	&	0.9325	&	0.9340	\\
	&		&(	0.0144	)&(	0.0152	)&(	0.0158	)&(	0.0149	)&		&		&		&		&		&		&		\\
	&		&[	1.0156	]&[	0.7113	]&[	0.5954	]&[	1.0344	]&		&		&		&		&		&		&		\\
	&	24	&	-0.0086	&	0.0206	&	0.0227	&	0.0040	&	0.8915	&	0.9065	&	0.9675	&	0.9320	&	0.9280	&	0.9340	&	0.9385	\\
	&		&(	0.0129	)&(	0.0134	)&(	0.0139	)&(	0.0134	)&		&		&		&		&		&		&		\\
	&		&[	0.9987	]&[	0.6898	]&[	0.5782	]&[	1.0225	]&		&		&		&		&		&		&		\\
\hline
200	&	12	&	-0.0210	&	0.0364	&	0.0446	&	0.0053	&	0.6475	&	0.5475	&	0.6295	&	0.9185	&	0.9105	&	0.9325	&	0.9315	\\
	&		&(	0.0133	)&(	0.0144	)&(	0.0153	)&(	0.0141	)&		&		&		&		&		&		&		\\
	&		&[	1.0078	]&[	0.7440	]&[	0.6046	]&[	1.0382	]&		&		&		&		&		&		&		\\
	&	16	&	-0.0144	&	0.0291	&	0.0365	&	0.0046	&	0.7530	&	0.6240	&	0.6705	&	0.9160	&	0.9065	&	0.9215	&	0.9205	\\
	&		&(	0.0113	)&(	0.0120	)&(	0.0127	)&(	0.0119	)&		&		&		&		&		&		&		\\
	&		&[	1.0002	]&[	0.7146	]&[	0.5927	]&[	1.0359	]&		&		&		&		&		&		&		\\
	&	20	&	-0.0106	&	0.0243	&	0.0308	&	0.0046	&	0.8190	&	0.6935	&	0.7225	&	0.9205	&	0.9090	&	0.9205	&	0.9230	\\
	&		&(	0.0101	)&(	0.0106	)&(	0.0109	)&(	0.0105	)&		&		&		&		&		&		&		\\
	&		&[	1.0011	]&[	0.7024	]&[	0.5797	]&[	1.0266	]&		&		&		&		&		&		&		\\
	&	24	&	-0.0084	&	0.0208	&	0.0267	&	0.0043	&	0.8475	&	0.7465	&	0.7570	&	0.9140	&	0.9090	&	0.9120	&	0.9195	\\
	&		&(	0.0093	)&(	0.0097	)&(	0.0100	)&(	0.0096	)&		&		&		&		&		&		&		\\
	&		&[	1.0103	]&[	0.7022	]&[	0.5852	]&[	1.0275	]&		&		&		&		&		&		&		\\
\hline
\end{longtable}
Notes: Monte Carlo experiments based on 2,000 repetitions. The standard deviations are inside parenthesis. Inside brackets are the ratio of the averages of the standard errors to the simulation of the standard deviations. The DGP is $
y_{it} = c_{i} + \phi_{1} y_{i,t-1}+ \phi_{2} y_{i,t-2} + u_{it},$ with true parameter $\phi_{1}=\phi_{2}=-0.4$. 
\end{landscape}

\newpage

\begin{landscape}
\begin{longtable} {cc|cccc|ccccccc}
\caption{Bias and Empirical Coverage for panel AR(2) model}
\label{table.mc.5}\\
\hline\hline																									
$n$	&	$T$	&	FE	&	HK	&	GMM & HPJ	&	FE-CCM	&	HK	& GMM	& HPJ-CCM	&	HPJ-FEB	&	HPJ-HPJB	&	HPJ-HPJPB	\\
\hline
                  &           &\multicolumn{4}{c|}{Bias} & \multicolumn{7}{c}{Coverage}\\
\hline                             
50	&	12	&	-0.3161	&	-0.1476	&	-0.3222	&	-0.0649	&	0.0000	&	0.0765	&	0.1395	&	0.6760	&	0.6480	&	0.8020	&	0.9470	\\
	&		&(	0.0518	)&(	0.0557	)&(	0.1365	)&(	0.0767	)&		&		&		&		&		&		&		\\
	&		&[	1.4428	]&[	1.5008	]&[	1.3745	]&[	1.4312	]&		&		&		&		&		&		&		\\
	&	16	&	-0.2415	&	-0.1005	&	-0.2524	&	-0.0329	&	0.0000	&	0.1600	&	0.0910	&	0.7705	&	0.7580	&	0.8775	&	0.9650	\\
	&		&(	0.0422	)&(	0.0437	)&(	0.0961	)&(	0.0617	)&		&		&		&		&		&		&		\\
	&		&[	1.4489	]&[	1.4195	]&[	1.3400	]&[	1.4107	]&		&		&		&		&		&		&		\\
	&	20	&	-0.1931	&	-0.0703	&	-0.2074	&	-0.0154	&	0.0000	&	0.3180	&	0.0610	&	0.8210	&	0.8025	&	0.9100	&	0.9695	\\
	&		&(	0.0366	)&(	0.0373	)&(	0.0748	)&(	0.0532	)&		&		&		&		&		&		&		\\
	&		&[	1.4951	]&[	1.3995	]&[	1.3368	]&[	1.4228	]&		&		&		&		&		&		&		\\
	&	24	&	-0.1632	&	-0.0531	&	-0.1791	&	-0.0112	&	0.0000	&	0.4380	&	0.0395	&	0.8015	&	0.7965	&	0.9030	&	0.9690	\\
	&		&(	0.0336	)&(	0.0341	)&(	0.0601	)&(	0.0480	)&		&		&		&		&		&		&		\\
	&		&[	1.5805	]&[	1.4346	]&[	1.3171	]&[	1.4633	]&		&		&		&		&		&		&		\\
\hline
100	&	12	&	-0.3138	&	-0.1455	&	-0.2227	&	-0.0620	&	0.0000	&	0.0040	&	0.2885	&	0.5865	&	0.5690	&	0.7205	&	0.9215	\\
	&		&(	0.0366	)&(	0.0392	)&(	0.1084	)&(	0.0548	)&		&		&		&		&		&		&		\\
	&		&[	1.4146	]&[	1.4938	]&[	1.2980	]&[	1.4406	]&		&		&		&		&		&		&		\\
	&	16	&	-0.2394	&	-0.0984	&	-0.1682	&	-0.0309	&	0.0000	&	0.0295	&	0.2305	&	0.7245	&	0.7175	&	0.8530	&	0.9600	\\
	&		&(	0.0302	)&(	0.0313	)&(	0.0776	)&(	0.0449	)&		&		&		&		&		&		&		\\
	&		&[	1.4477	]&[	1.4367	]&[	1.3110	]&[	1.4475	]&		&		&		&		&		&		&		\\
	&	20	&	-0.1932	&	-0.0703	&	-0.1388	&	-0.0172	&	0.0000	&	0.0970	&	0.1855	&	0.7945	&	0.7835	&	0.9020	&	0.9690	\\
	&		&(	0.0264	)&(	0.0269	)&(	0.0585	)&(	0.0382	)&		&		&		&		&		&		&		\\
	&		&[	1.5053	]&[	1.4262	]&[	1.2883	]&[	1.4448	]&		&		&		&		&		&		&		\\
	&	24	&	-0.1607	&	-0.0505	&	-0.1175	&	-0.0078	&	0.0000	&	0.2265	&	0.1635	&	0.8030	&	0.7950	&	0.9215	&	0.9775	\\
	&		&(	0.0236	)&(	0.0237	)&(	0.0478	)&(	0.0340	)&		&		&		&		&		&		&		\\
	&		&[	1.5410	]&[	1.4149	]&[	1.2828	]&[	1.4585	]&		&		&		&		&		&		&		\\
\hline
200	&	12	&	-0.3128	&	-0.1447	&	-0.1126	&	-0.0604	&	0.0000	&	0.0000	&	0.6185	&	0.4285	&	0.4050	&	0.6100	&	0.8805	\\
	&		&(	0.0253	)&(	0.0268	)&(	0.0828	)&(	0.0381	)&		&		&		&		&		&		&		\\
	&		&[	1.3624	]&[	1.4475	]&[	1.1826	]&[	1.4054	]&		&		&		&		&		&		&		\\
	&	16	&	-0.2387	&	-0.0979	&	-0.0749	&	-0.0299	&	0.0000	&	0.0000	&	0.6560	&	0.6550	&	0.6505	&	0.8255	&	0.9515	\\
	&		&(	0.0214	)&(	0.0220	)&(	0.0570	)&(	0.0308	)&		&		&		&		&		&		&		\\
	&		&[	1.4342	]&[	1.4296	]&[	1.1797	]&[	1.4002	]&		&		&		&		&		&		&		\\
	&	20	&	-0.1923	&	-0.0693	&	-0.0577	&	-0.0159	&	0.0000	&	0.0095	&	0.6360	&	0.7580	&	0.7485	&	0.8855	&	0.9675	\\
	&		&(	0.0188	)&(	0.0193	)&(	0.0444	)&(	0.0271	)&		&		&		&		&		&		&		\\
	&		&[	1.5034	]&[	1.4465	]&[	1.2020	]&[	1.4456	]&		&		&		&		&		&		&		\\
	&	24	&	-0.1607	&	-0.0505	&	-0.0444	&	-0.0087	&	0.0000	&	0.0475	&	0.6585	&	0.7950	&	0.7915	&	0.9175	&	0.9765	\\
	&		&(	0.0167	)&(	0.0169	)&(	0.0362	)&(	0.0236	)&		&		&		&		&		&		&		\\
	&		&[	1.5366	]&[	1.4254	]&[	1.2127	]&[	1.4285	]&		&		&		&		&		&		&		\\
\hline
\end{longtable}
Notes: Monte Carlo based on 2,000 repetitions. Standard deviations are inside parenthesis. Inside brackets are ratios of averages of the standard errors to simulation of the standard deviations. DGP is $
y_{it} = c_{i} + \phi_{1} y_{i,t-1}+ \phi_{2} y_{i,t-2} + \rho_{1}x_{it} + \rho_{2}x_{it-1} + u_{it},$ with true parameter $\phi_{1}=\phi_{2}=0.4$ and $\rho_{1}=\rho_{2}=0.5$. 
\end{landscape}

\newpage

\begin{landscape}
\begin{longtable} {cc|cccc|ccccccc}
\caption{Bias and Empirical Coverage for random coefficients AR model}
\label{table.mc.4}\\
\hline\hline																									
$n$	&	$T$	&	FE	&	HK	&	GMM & HPJ	&	FE-CCM	&	HK	& GMM	& HPJ-CCM	&	HPJ-FEB	&	HPJ-HPJB	&	HPJ-HPJPB	\\
\hline
                  &           &\multicolumn{4}{c|}{Bias} & \multicolumn{7}{c}{Coverage}\\
\hline 
50	&	12	&	-0.2076	&	-0.0949	&	-0.1267	&	-0.0411	&	0.0750	&	0.3740	&	0.4210	&	0.7225	&	0.7250	&	0.8375	&	0.8570	\\
	&		&(	0.0609	)&(	0.0660	)&(	0.0795	)&(	0.0844	)&		&		&		&		&		&		&		\\
	&		&[	1.0806	]&[	1.7323	]&[	1.3986	]&[	1.5547	]&		&		&		&		&		&		&		\\
	&	16	&	-0.1591	&	-0.0715	&	-0.1087	&	-0.0241	&	0.1720	&	0.4460	&	0.3815	&	0.7935	&	0.8090	&	0.8780	&	0.8990	\\
	&		&(	0.0541	)&(	0.0574	)&(	0.0654	)&(	0.0733	)&		&		&		&		&		&		&		\\
	&		&[	1.0101	]&[	1.7774	]&[	1.4647	]&[	1.4632	]&		&		&		&		&		&		&		\\
	&	20	&	-0.1290	&	-0.0575	&	-0.0956	&	-0.0158	&	0.3155	&	0.4660	&	0.3550	&	0.7945	&	0.8205	&	0.8885	&	0.9090	\\
	&		&(	0.0541	)&(	0.0568	)&(	0.0622	)&(	0.0712	)&		&		&		&		&		&		&		\\
	&		&[	1.0530	]&[	1.9933	]&[	1.6559	]&[	1.4994	]&		&		&		&		&		&		&		\\
	&	24	&	-0.1099	&	-0.0494	&	-0.0888	&	-0.0125	&	0.4295	&	0.4930	&	0.3420	&	0.8235	&	0.8365	&	0.8975	&	0.9180	\\
	&		&(	0.0530	)&(	0.0552	)&(	0.0580	)&(	0.0660	)&		&		&		&		&		&		&		\\
	&		&[	1.0572	]&[	2.1466	]&[	1.7639	]&[	1.4307	]&		&		&		&		&		&		&		\\
\hline
100	&	12	&	-0.2060	&	-0.0932	&	-0.1075	&	-0.0379	&	0.0015	&	0.1715	&	0.3070	&	0.7095	&	0.7280	&	0.8430	&	0.8615	\\
	&		&(	0.0421	)&(	0.0456	)&(	0.0565	)&(	0.0590	)&		&		&		&		&		&		&		\\
	&		&[	1.0136	]&[	1.6894	]&[	1.3783	]&[	1.4919	]&		&		&		&		&		&		&		\\
	&	16	&	-0.1583	&	-0.0707	&	-0.0917	&	-0.0233	&	0.0320	&	0.2700	&	0.2690	&	0.7730	&	0.8055	&	0.8805	&	0.8985	\\
	&		&(	0.0395	)&(	0.0420	)&(	0.0493	)&(	0.0527	)&		&		&		&		&		&		&		\\
	&		&[	1.0111	]&[	1.8367	]&[	1.5291	]&[	1.4531	]&		&		&		&		&		&		&		\\
	&	20	&	-0.1276	&	-0.0559	&	-0.0789	&	-0.0135	&	0.0950	&	0.3310	&	0.2770	&	0.8085	&	0.8370	&	0.9005	&	0.9135	\\
	&		&(	0.0380	)&(	0.0399	)&(	0.0447	)&(	0.0494	)&		&		&		&		&		&		&		\\
	&		&[	1.0051	]&[	1.9788	]&[	1.6517	]&[	1.4253	]&		&		&		&		&		&		&		\\
	&	24	&	-0.1079	&	-0.0474	&	-0.0724	&	-0.0093	&	0.1680	&	0.3700	&	0.2615	&	0.8340	&	0.8635	&	0.9195	&	0.9290	\\
	&		&(	0.0366	)&(	0.0382	)&(	0.0407	)&(	0.0470	)&		&		&		&		&		&		&		\\
	&		&[	1.0008	]&[	2.0981	]&[	1.7193	]&[	1.4012	]&		&		&		&		&		&		&		\\
\hline
200	&	12	&	-0.2051	&	-0.0922	&	-0.0955	&	-0.0374	&	0.0000	&	0.0365	&	0.1610	&	0.6450	&	0.6750	&	0.8145	&	0.8370	\\
	&		&(	0.0291	)&(	0.0315	)&(	0.0394	)&(	0.0411	)&		&		&		&		&		&		&		\\
	&		&[	0.9832	]&[	1.6532	]&[	1.3494	]&[	1.4615	]&		&		&		&		&		&		&		\\
	&	16	&	-0.1575	&	-0.0699	&	-0.0817	&	-0.0217	&	0.0005	&	0.0995	&	0.1415	&	0.7370	&	0.7710	&	0.8685	&	0.8815	\\
	&		&(	0.0287	)&(	0.0305	)&(	0.0355	)&(	0.0391	)&		&		&		&		&		&		&		\\
	&		&[	1.0241	]&[	1.8885	]&[	1.5463	]&[	1.5043	]&		&		&		&		&		&		&		\\
	&	20	&	-0.1258	&	-0.0541	&	-0.0693	&	-0.0114	&	0.0085	&	0.1780	&	0.1640	&	0.7760	&	0.8080	&	0.8935	&	0.9050	\\
	&		&(	0.0283	)&(	0.0297	)&(	0.0332	)&(	0.0371	)&		&		&		&		&		&		&		\\
	&		&[	1.0488	]&[	2.0891	]&[	1.7197	]&[	1.5009	]&		&		&		&		&		&		&		\\
	&	24	&	-0.1071	&	-0.0465	&	-0.0630	&	-0.0089	&	0.0160	&	0.2255	&	0.1655	&	0.8175	&	0.8455	&	0.9155	&	0.9310	\\
	&		&(	0.0265	)&(	0.0276	)&(	0.0303	)&(	0.0336	)&		&		&		&		&		&		&		\\
	&		&[	1.0112	]&[	2.1444	]&[	1.7938	]&[	1.4004	]&		&		&		&		&		&		&		\\
\hline
\end{longtable}
Notes: Monte Carlo experiments based on 2,000 repetitions. The standard deviations are inside parenthesis. Inside brackets are the ratio of the averages of the standard errors to the simulation of the standard deviations. The DGP is $y_{it} = c_{i} y_{i,t-1} + u_{it}$. 
\end{landscape}

\newpage

 \begin{landscape}
 \begin{longtable} {cc|cccc|ccccccc}
 \caption{Bias and Empirical Coverage for panel EXPAR model}
 \label{table.mc.3}\\
 \hline\hline																									
 $n$	&	$T$	&	FE	&	HK	&	GMM & HPJ	&	FE-CCM	&	HK	& GMM	& HPJ-CCM	&	HPJ-FEB	&	HPJ-HPJB	&	HPJ-HPJPB	\\
 \hline
                   &           &\multicolumn{4}{c|}{Bias} & \multicolumn{7}{c}{Coverage}\\
 \hline                             
 50	&	12	&	-0.1750	&	-0.0538	&	-0.1056	&	-0.0063	&	0.0540	&	0.6400	&	0.6525	&	0.7695	&	0.7835	&	0.9005	&	0.8955	\\
 	&		&(	0.0489	)&(	0.0530	)&(	0.0765	)&(	0.0710	)&		&		&		&		&		&		&		\\
 	&		&[	1.0552	]&[	1.4596	]&[	1.1175	]&[	1.6163	]&		&		&		&		&		&		&		\\
 	&	16	&	-0.1299	&	-0.0361	&	-0.0888	&	0.0002	&	0.1415	&	0.6750	&	0.5820	&	0.7875	&	0.8020	&	0.9120	&	0.9100	\\
 	&		&(	0.0424	)&(	0.0451	)&(	0.0594	)&(	0.0588	)&		&		&		&		&		&		&		\\
 	&		&[	1.0391	]&[	1.4749	]&[	1.1636	]&[	1.5574	]&		&		&		&		&		&		&		\\
 	&	20	&	-0.1050	&	-0.0288	&	-0.0789	&	0.0016	&	0.2165	&	0.7045	&	0.5190	&	0.8040	&	0.8230	&	0.9200	&	0.9270	\\
 	&		&(	0.0384	)&(	0.0404	)&(	0.0496	)&(	0.0513	)&		&		&		&		&		&		&		\\
 	&		&[	1.0296	]&[	1.5015	]&[	1.2033	]&[	1.5004	]&		&		&		&		&		&		&		\\
 	&	24	&	-0.0857	&	-0.0213	&	-0.0707	&	0.0031	&	0.3425	&	0.7200	&	0.4760	&	0.7940	&	0.8205	&	0.9160	&	0.9190	\\
 	&		&(	0.0372	)&(	0.0388	)&(	0.0444	)&(	0.0480	)&		&		&		&		&		&		&		\\
 	&		&[	1.0691	]&[	1.6044	]&[	1.2667	]&[	1.5075	]&		&		&		&		&		&		&		\\
 \hline
 100	&	12	&	-0.1721	&	-0.0506	&	-0.0746	&	-0.0039	&	0.0015	&	0.5060	&	0.6745	&	0.7635	&	0.7785	&	0.9165	&	0.9205	\\
 	&		&(	0.0344	)&(	0.0373	)&(	0.0563	)&(	0.0510	)&		&		&		&		&		&		&		\\
 	&		&[	1.0268	]&[	1.4532	]&[	1.1218	]&[	1.6227	]&		&		&		&		&		&		&		\\
 	&	16	&	-0.1297	&	-0.0359	&	-0.0654	&	0.0002	&	0.0095	&	0.5795	&	0.5835	&	0.7915	&	0.8205	&	0.9265	&	0.9240	\\
 	&		&(	0.0302	)&(	0.0321	)&(	0.0446	)&(	0.0420	)&		&		&		&		&		&		&		\\
 	&		&[	1.0174	]&[	1.4837	]&[	1.1899	]&[	1.5444	]&		&		&		&		&		&		&		\\
 	&	20	&	-0.1022	&	-0.0258	&	-0.0572	&	0.0042	&	0.0390	&	0.6515	&	0.5285	&	0.7970	&	0.8275	&	0.9350	&	0.9330	\\
 	&		&(	0.0270	)&(	0.0283	)&(	0.0368	)&(	0.0367	)&		&		&		&		&		&		&		\\
 	&		&[	0.9925	]&[	1.4936	]&[	1.2130	]&[	1.4868	]&		&		&		&		&		&		&		\\
 	&	24	&	-0.0846	&	-0.0202	&	-0.0506	&	0.0039	&	0.1075	&	0.6570	&	0.5060	&	0.8100	&	0.8370	&	0.9265	&	0.9320	\\
 	&		&(	0.0261	)&(	0.0272	)&(	0.0333	)&(	0.0337	)&		&		&		&		&		&		&		\\
 	&		&[	1.0329	]&[	1.5900	]&[	1.2918	]&[	1.4699	]&		&		&		&		&		&		&		\\
 \hline
 200	&	12	&	-0.1721	&	-0.0506	&	-0.0591	&	-0.0052	&	0.0000	&	0.2680	&	0.6205	&	0.7835	&	0.8065	&	0.9270	&	0.9300	\\
 	&		&(	0.0241	)&(	0.0261	)&(	0.0418	)&(	0.0352	)&		&		&		&		&		&		&		\\
 	&		&[	1.0084	]&[	1.4413	]&[	1.1601	]&[	1.5806	]&		&		&		&		&		&		&		\\
 	&	16	&	-0.1276	&	-0.0337	&	-0.0487	&	0.0024	&	0.0000	&	0.4325	&	0.5700	&	0.8000	&	0.8345	&	0.9390	&	0.9395	\\
 	&		&(	0.0209	)&(	0.0222	)&(	0.0307	)&(	0.0297	)&		&		&		&		&		&		&		\\
 	&		&[	0.9847	]&[	1.4549	]&[	1.1398	]&[	1.5331	]&		&		&		&		&		&		&		\\
 	&	20	&	-0.1011	&	-0.0247	&	-0.0436	&	0.0043	&	0.0005	&	0.5260	&	0.4985	&	0.8175	&	0.8500	&	0.9405	&	0.9405	\\
 	&		&(	0.0195	)&(	0.0205	)&(	0.0270	)&(	0.0260	)&		&		&		&		&		&		&		\\
 	&		&[	1.0041	]&[	1.5272	]&[	1.2305	]&[	1.4792	]&		&		&		&		&		&		&		\\
 	&	24	&	-0.0843	&	-0.0199	&	-0.0402	&	0.0042	&	0.0050	&	0.5590	&	0.4365	&	0.8135	&	0.8445	&	0.9390	&	0.9435	\\
 	&		&(	0.0186	)&(	0.0194	)&(	0.0241	)&(	0.0238	)&		&		&		&		&		&		&		\\
 	&		&[	1.0302	]&[	1.6044	]&[	1.2930	]&[	1.4541	]&		&		&		&		&		&		&		\\
 \hline
 \end{longtable}
 Notes: Monte Carlo based on 2,000 repetitions. The standard deviations are inside parenthesis. Inside brackets are ratios of averages of the standard errors to simulation of the standard deviations. The DGP is $y_{it} = c_{i} + \rho_1( y_{i,t-1}-c_{i}) + \rho_2 \exp( -(y_{i,t-1}- c_{i})^2) + u_{it}$, with true parameter $\rho_{1}=0.8$ and $\rho_{2}=1$. 
 \end{landscape}

\newpage

\linespread{1.08}

\begin{longtable}{cccccc}
\caption{Empirical Results for the Unemployment-Growth Model}\\
\label{t.app}\\
\hline\hline
\multicolumn{6}{c}{\textbf{Panel A: (1976-2010)}}\\
\hline
&  FE-CCM & HPJ-CCM & HPJ-HPJPB & GMM & TSLS\\ 
\hline
$\hat{\gamma}$	&$	0.790	$&$	0.830	$&$	0.830	$&$	0.800	$&$	0.336	$\\
$95\%$ CI	&$	[0.755,0.825]	$&$	[0.799,0.861]	$&$	[0.780,0.867]	$&$	[0.765,0.835]	$&$	[0.248,0.423]	$\\
$90\%$ CI	&$	[0.761,0.820]	$&$	[0.804,0.856]	$&$	[0.789,0.862]	$&$	[0.770,0.829]	$&$	[0.262,0.409]	$\\
$\hat{\beta}$ 	&$	-0.088	$&$	-0.079	$&$	-0.079	$&$	-0.084	$&$	-0.003	$\\
$95\%$ CI	&$	[-0.106,-0.070]	$&$	[-0.117,-0.041]	$&$	[-0.118,-0.048]	$&$	[-0.126,-0.042]	$&$	[-0.021,0.015]	$\\
$90\%$ CI	&$	[-0.103,-0.072]	$&$	[-0.111,-0.047]	$&$	[-0.108,-0.053]	$&$	[-0.120,-0.049]	$&$	[-0.018,0.012]	$\\
\hline
\multicolumn{6}{c}{\textbf{Panel B: (1976-2001)}}\\
\hline
&  FE-CCM & HPJ-CCM & HPJ-HPJPB & GMM & TSLS\\ 
\hline
$\hat{\gamma}$	&$	0.803	$&$	0.892	$&$	0.892	$&$	0.821	$&$	0.381	$\\
$95\%$ CI	&$	[0.763,0.842]	$&$	[0.852,0.932]	$&$	[0.854,0.926]	$&$	[0.780,0.862]	$&$	[0.273,0.488]	$\\
$90\%$ CI	&$	[0.770,0.836]	$&$	[0.858,0.925]	$&$	[0.860,0.921]	$&$	[0.787,0.856]	$&$	[0.291,0.471]	$\\
$\hat{\beta}$ 	&$	-0.081	$&$	-0.064	$&$	-0.064	$&$	-0.072	$&$	-0.010	$\\
$95\%$ CI	&$	[-0.106,-0.055]	$&$	[-0.108,-0.020]	$&$	[-0.112,-0.027]	$&$	[-0.122,-0.022]	$&$	[-0.028,0.008]	$\\
$90\%$ CI	&$	[-0.102,-0.059]	$&$	[-0.101,-0.027]	$&$	[-0.104,-0.034]	$&$	[-0.114,-0.030]	$&$	[-0.025,0.005]	$\\
\hline
\multicolumn{6}{c}{\textbf{Panel C: (1976-1991)}}\\
\hline
&  FE-CCM & HPJ-CCM & HPJ-HPJPB & GMM & TSLS\\ 
\hline
$\hat{\gamma}$	&$	0.676	$&$	0.721	$&$	0.721	$&$	0.669	$&$	0.446	$\\
$95\%$ CI	&$	[0.634,0.719]	$&$	[0.670,0.771]	$&$	[0.666,0.771]	$&$	[0.623,0.715]	$&$	[0.302,0.589]	$\\
$90\%$ CI	&$	[0.641,0.712]	$&$	[0.678,0.763]	$&$	[0.674,0.763]	$&$	[0.631,0.710]	$&$	[0.325,0.566]	$\\
$\hat{\beta}$ 	&$	-0.113	$&$	-0.129	$&$	-0.129	$&$	-0.095	$&$	0.000	$\\
$95\%$ CI	&$	[-0.138,-0.090]	$&$	[-0.2146,-0.0427]	$&$	[-0.215,-0.067]	$&$	[-0.167,-0.022]	$&$	[-0.029,0.029]	$\\
$90\%$ CI	&$	[-0.133,-0.093]	$&$	[-0.2008,-0.0565]	$&$	[-0.202,-0.073]	$&$	[-0.156,-0.034]	$&$	[-0.024,0.024]	$\\
\hline
\end{longtable}
\noindent 
Notes: Number of bootstrap for HPJ-HPJPB is 1,000.


\begin{thebibliography}{99}
\bibitem[Alvarez and Arellano(2003)]{AA03}
Alvarez, J. and Arellano, M. (2003). The time series and cross section asymptotics of dynamic panel data estimators. {\em Econometrica} \textbf{71} 1121-1159.
\bibitem[An and Huang(1996)]{AH96}
An, H.Z. and Huang, F.C. (1996). The geometric ergodicity of nonlinear autoregressive models. {\em Statistica Sinica} \textbf{6} 943-956. 
\bibitem[Anderson and Hsiao(1982)]{AH82}
Anderson, T.W. and Hsiao, C. (1982): Formulation and estimation of dynamic models using panel data. {\em Journal of Econometrics} \textbf{18} 47-82.
\bibitem[Angrist and Pischke(2008)]{AP08}
Angrist, J. D. and Pischke, J.-S. (2008). {\em Mostly Harmless Econometrics: An Empiricist's Companion}. Princeton University Press. 
\bibitem[Arellano(1987)]{A87}
Arellano, M. (1987). Computing robust standard errors for within-groups estimators. {\em Oxford Bulletin of Economics and Statistics} \textbf{49} 431-434.
\bibitem[Arellano and Bond (1991)]{AB91}
Arellano, M. and Bond, S. (1991). Some tests of specification for panel data: Monte Carlo evidence and an application to employment equations. {\em Review of Economic Studies} \textbf{58} 277-279.
\bibitem[Arellano and Bonhomme(2009)]{AB09}
Arellano, M. and Bonhomme, S. (2009). Robust priors in nonlinear panel data models. {\em Econometrica} \textbf{77} 489-536.
\bibitem[Arrelano and Bover(1995)]{AB95}
Arellano, M. and Bover, O. (1995).  Another look at the instrumental variable estimation of  error-component models. {\em Journal of Econometrics} \textbf{68} 29-51.
\bibitem[Arellano and Hahn(2006)]{AH06}
Arellano, M. and Hahn, J. (2006). A likelihood-based approximate solution to the incidental parameter problem in dynamic nonlinear models with multiple effects. Preprint.  
\bibitem[Arellano and Hahn(2007)]{AH07}
Arellano, M. and Hahn, J. (2007). Understanding bias in nonlinear panel models: some recent development. In: {\em Advanced in Economics and Econometrics}, Vol. III. Econometric Society, ed. by R.W. Blundell, W.K. Newey, and P. Torsten, pp. 381-409, Cambridge University Press. 
\bibitem[Baglan(2010)]{Baglan10}
Baglan, D. (2010). Efficient estimation of a partially linear dynamic panel data model with fixed effects: Application to unemployment dynamic in the U.S. Preprint. 
\bibitem[Bertrand et al.(2004)]{DMB04}
Bertrand, M., Duflo, E. and  Mullainathan, S. (2004). How much should we trust difference in differences estimates? {\em Quarterly Journal of Economics} \textbf{119} 249-275. 
\bibitem[Bun and Carree(2005)]{BC05}
Bun, M.J.G. and Carree, M.A. (2005). Bias-corrected estimation in dynamic panel data models. {\em Journal of Business and Economic Statistics} \textbf{23} 200-210.
\bibitem[Bun and Kiviet(2006)]{BK06}
Bun, M.J.G. and Kiviet, J.F. (2006). The effects of dynamic feedbacks on LS and MM
estimator accuracy in panel data models. {\em Journal of Econometrics} \textbf{132} 409-444.
\bibitem[Cheng and Huang(2010)]{CH10}
Cheng, G. and Huang, J. (2010). Bootstrap consistency for general semiparametric $M$-estimation. {\em Annals of Statistics} \textbf{38} 2884-2915.
\bibitem[Davidov(1968)]{D68}
Davidov, Y. A. (1968). Convergence of distributions generated by stationary stochastic processes. {\em Theory of Probability and Its Applications} \textbf{13} 691-696.
\bibitem[Dhaene and Jochmans(2009)]{DJ09}
Dhaene, G. and Jochmans, K. (2009). Split-panel jackknife estimation of fixed-effect models. Preprint.
\bibitem[Dhaene and Jochmans(2010)]{DJ10}
Dhaene, G. and Jochmans, K. (2010). An adjusted profile likelihood for non-stationary panel data models with fixed effects. Preprint.  
\bibitem[Fan and Yao(2003)]{FY03}
Fan, J. and Yao, Q. (2003). {\em Nonlinear Time Series}. Springer. 
\bibitem[Gon\c{c}alves(2011)]{G11}
Gon\c{c}alves, S. (2011). The moving bootstrap for panel linear regression models with individual fixed effects. {\em Econometric Theory} \textbf{27} 1048-1082. 
\bibitem[Gon\c{c}alves and White(2005)]{GW05}
Gon\c{c}alves, S. and White, H. (2005). Bootstrap standard error estimates for linear regression. {\em Journal of the American Statistical Association} \textbf{100} 970-979.
\bibitem[Hahn and Kuersteiner(2002)]{HK02}
Hahn, J. and Kuersteiner, G. (2002). Asymptotically unbiased inference for a dynamic panel model with fixed effects when both $n$ and $T$ are large. {\em Econometrica} \textbf{70} 1639-1657.
\bibitem[Hahn and Kuersteiner(2011)]{HK11}
Hahn, J. and Kuersteiner, G. (2011). Bias reduction for dynamic nonlinear panel models with fixed effects. {\em Econometric Theory} \textbf{27} 1152-1191.
\bibitem[Hansen(2007a)]{H07a}
Hansen, C.B. (2007a). Generalized least squares inference in multilevel models with serial correlation and fixed effects. {\em Journal of Econometrics} \textbf{140} 670-94.
\bibitem[Hansen(2007b)]{H07b}
Hansen, C.B. (2007b). Asymptotic properties of a robust variance matrix estimator for panel data when $T$ is large. {\em Journal of Econometrics} \textbf{141} 597-620.
\bibitem[Horowitz(2001)]{H01}
Horowitz, J.L. (2001). {\em The Bootstrap}. In: {\em Handbook of Econometrics}, Vol. 5, ed. by J.J. Heckman and E.E. Leamer, pp. 3159-3228, Elsevier.
\bibitem[Kapetanios(2008)]{K08}
Kapetanios, G. (2008). A bootstrap procedure for panel data sets with many cross-sectional units. {\em Econometrics Journal} \textbf{11} 375-395. 
\bibitem[Kiviet(1995)]{K95}
Kiviet, J.F. (1995). On bias, inconsistency, and efficiency of various estimators in dynamic panel data models. {\em Journal of Econometrics} \textbf{68} 53-78.
\bibitem[Lancaster(2000)]{L00}
Lancaster, T. (2000). The incidental parameter problem since 1948. {\em Journal of Econometrics} \textbf{95} 391-413.
\bibitem[Lee(2012)]{L10}
Lee, Y. (2012). Bias in dynamic panel models under time series misspecification. {\em Journal of Econometrics} \textbf{169} 54-60. 
\bibitem[Lu et al.(2012)]{LGF12}
Lu, W., Goldberg, Y. and Fine, J.P. (2012). On the robustness of the adaptive lasso to model misspecification. {\em Biometrika} \textbf{99} 717-731. 
\bibitem[Ma and Kosorok(2005)]{MK05}
Ma, S. and Kosorok, M. (2005). Robust semiparametric $M$-estimation and the weighted bootstrap. {\em Journal of Multivariate Analysis} \textbf{96} 190-217. 
\bibitem[Neyman and Scott(1948)]{NS48}
Neyman, J, and Scott, E. (1948). Consistent estimates based on partially consistent observations. {\em Econometrica} \textbf{16} 1-31.
\bibitem[Nickell(1981)]{N81}
Nickell, S. (1981). Biases in dynamic models with fixed effects. {\em Econometrica} \textbf{49} 1417-1425.
\bibitem[Okui(2008)]{O08}
Okui, R. (2008). Panel AR(1) estimators under misspecification. {\em Economics Letters} \textbf{101} 210-213.
\bibitem[Okui(2010)]{O10}
Okui, R. (2010). Asymptotically unbiased estimation of autocovariances and autocorrelations with long panel data. {\em Econometric Theory} \textbf{26} 1263-1304.
 \bibitem[Ozaki(1985)]{O85}
 Ozaki, T. (1985). Non-linear time series models and dynamical systems. In: {\em Handbook of Statistics 5}  (Edited by E. J. Hannan, P. R. Krishnaiah and M. M. Rao),  Elsevier Science.
\bibitem[Phillips and Sul(2007)]{PS07}
Phillips, P.C.B. and Sul, D. (2007). Bias in dynamic panel estimation with fixed effects, incidental trends and cross section dependence. {\em Journal of Econometrics} \textbf{137} 162-188.
\bibitem[R Development Core Team(2008)]{R08}
R Development Core Team. (2008). R: A Language and Environment for Statistical Computing. R Foundation for Statistical Computing.
\bibitem[Shao(1992)]{S92}
Shao, J. (1992). Bootstrap variance estimators with truncation. {\em Statistics and Probability Letters} \textbf{15} 95-101.
\bibitem[White(1980)]{W80}
White, H. (1980). Using least-squares to approximate unknown regression functions. {\em International Economic Review} \textbf{21} 149-170.
\bibitem[White(1982)]{W82}
White, H. (1982). Maximum likelihood estimation of misspecified models. {\em Econometrica} \textbf{50} 1-25.
\bibitem[Wooldridge(2001)]{W01}
Wooldridge, J.M. (2001). {\em Econometric Analysis of Cross Section and Panel Data}. MIT Press. 
\bibitem[Yokoyama(1980)]{Y80}
Yokoyama, R. (1980). Moment bounds for stationary mixing sequences. {\em Z. Wahrscheinlichkeitstheorie verw. Gebiete} \textbf{52} 45-57. 
\end{thebibliography}
\end{document}